\numberwithin{equation}{section}
\newtheorem{theorem}[equation]{Theorem}
\newtheorem{thm}[equation]{Theorem}
\newtheorem{proposition}[equation]{Proposition}
\newtheorem{prop}[equation]{Proposition}
\newtheorem{lemma}[equation]{Lemma}
\newtheorem{corollary}[equation]{Corollary}
\newtheorem{conjecture}[equation]{Conjecture}
\theoremstyle{remark}
\theoremstyle{definition}
\newtheorem{remark}[equation]{Remark}
\newtheorem{definition}[equation]{Definition}
\newtheorem{defn}[equation]{Definition}
\def\XXint#1#2#3{{\setbox0=\hbox{$#1{#2#3}{\int}$}
	\vcenter{\hbox{$#2#3$}}\kern-.5\wd0}}
\newcommand{\cB}{\mathcal{B}}
\newcommand{\N}{\mathbb N}
\newcommand{\R}{\mathbb R}
\newcommand{\Q}{\mathbb Q}
\newcommand{\g}{\mathfrak{g}}
\newcommand{\Lie}{\operatorname{Lie}}
\def\eps{\epsilon}
\newcommand{\vol}{\operatorname{vol}}
\newcommand{\sus}{\subseteq}
\newcommand{\at}[1]{\raise-.5ex\hbox{\ensuremath|}_{#1}}
\newcommand{\der}{\mathrm{d}}
\newcommand{\ja}{\quad \text{and} \quad}
\newcommand{\inv}[1]{{#1}^{-1}}
\newcommand{\spann}[1]{\mathrm{span}(#1)}
\newcommand{\hq}{G_{\Q}}
\newcommand{\hqs}{\hq^{(s)}}
\begin{document}

\title[Differentiability on infinite-dimensional Carnot groups]{G\^ateaux differentiability on infinite-dimensional Carnot groups}

\author{Enrico Le Donne}
\author{Sean Li}
\author{Terhi Moisala}

%\address[Le Donne]{Department of Mathematics and Statistics, P.O. Box 35,
%FI-40014,
%University of Jyv\"askyl\"a, Finland}
%\email{ledonne@msri.org}

\begin{abstract}
	This paper contributes to the generalization of Rademacher's differentiability result for Lipschitz functions when the domain is infinite dimensional and has nonabelian group structure. We introduce the notion of metric scalable groups which are our infinite-dimensional analogues of Carnot groups. The groups in which we will mostly be interested are the ones that admit a dense increasing sequence of (finite-dimensional) Carnot subgroups. In fact, in each of these spaces we show that every Lipschitz function has a point of G\^{a}teaux differentiability. We provide examples and criteria for when such Carnot subgroups exist. The proof of the main theorem follows the work of Aronszajn \cite{Aronszajn1976} and Pansu \cite{Pansu}.
	\vspace{-0.7cm}
\end{abstract}

%\renewcommand{\subjclassname} {\textup{2010} Mathematics Subject Classification}
%\subjclass[]{
%53C17, %   Sub-Riemannian geometry
%28C15, %  	Set functions and measures on topological spaces (regularity of measures, etc.)
%49Q15, %  Geometric measure and integration theory, integral and normal currents
%43A80, % (1973-now) Analysis on other specific Lie groups
%53C60,   % Finsler spaces and generalizations 
%28A75,  %  Length, area, volume, other geometric measure theory
%26A16  % Lipschitz (Hlder) classes
%58C35   Integration on manifolds; measures on manifolds
%26B20 Integral formulas (Stokes, Gauss, Green, etc.)
%54Exx, % Spaces with richer structures 
%37L40 %Invariant measures
%58D05, %Groups of diffeomorphisms and homeomorphisms as manifolds
%22F50, %Groups as automorphisms of other structures
% 22DXX % Locally compact groups and their algebras
%22E25, % Nilpotent and solvable Lie groups
% 22F30, % Homogeneous spaces
%14M17. %Homogeneous spaces and generalizations 
% 53C30 % Homogeneous manifolds
% 58D19% Group actions and symmetry properties
% 58C25 % Differentiable maps
%49J20  % (1991-now) Optimal control problems involving partial differential equations
%49K21,  % (2010-now) Problems involving relations other than differential equations
%49J15  %(1991-now) Optimal control problems involving ordinary differential equations
%}
  
\thanks{E.L.D. and T.M. were partially supported by the Academy of Finland (grant
288501
`\emph{Geometry of subRiemannian groups}')
and by the European Research Council
 (ERC Starting Grant 713998 GeoMeG `\emph{Geometry of Metric Groups}'). T.M. was also supported by the Finnish Cultural Foundation (grant 00170709).
  S.L. was supported by NSF grant DMS-1812879.
}
\date{\today}
\maketitle
\setcounter{tocdepth}{3} 
 
\tableofcontents
  \newpage
  
% List of TODOs
%  \makeatletter
%\providecommand\@dotsep{5}
%\makeatother
%\listoftodos\relax

%\todo[inline]{Infinite dimensional or infinite-dimensional?  
%\\SL: I made the choice infinite-dimensional and replaced everything.  If you disagree, you can just find replace `-dimensional' to ` dimensional'.
%\\
%E: I'd adopt the convention that if one use it as an adjective then one writes:
%A two-year-old dog is not an infinite-dimensional vector space. 
%Instead, one writes:
%That vector space is finite dimensional, and its definition is two year old. Agree?}
\section{Introduction}
Rademacher's theorem states that Lipschitz maps from $\R^n$ to $\R$ are differentiable almost everywhere.  This has been generalized in many ways over the past few decades.  In the case of considering domains more general than $\R^n$, there have been two distinct branches.  On the one side, there have been studies of what happens when one keeps the vector-space structure but removes finite dimensionality (and thus the measure).  On the other side, there has been interest in removing the vector-space assumption but preserving the metric-measure structure.  Extensions to more general target spaces have also been considered, but in this paper we will not be interested in this route.

We will focus on extending the theorem to domains that are nonabelian and infinite dimensional. We will concentrate on $\R$-valued functions, although the results will hold for more general targets like RNP Banach spaces.  We now quickly review previous results, discuss the issues present in both branches, and provide some references. %\todo[inline]{$ \uparrow $ Say that the domain is $ \infty $-dimensional and non-commuting}

For the case of an infinite-dimensional Banach space domain $X$, derivatives of a function $f$ are linear transforms $T : X \to \R$.  However, there are two ways this may be interpreted.  A function is G\^ateaux differentiable at $x_0 \in X$ if 
\begin{align*}
  T(v) = \lim_{r \to 0} \frac{f(x_0 + tv) - f(x_0)}{t}.
\end{align*}
Instead, a function is Fr\'echet differentiable at $x_0$ if 
\begin{align*}
  f(x_0 + v) = f(x_0) + T(v) + o(\|v\|) ~\text{as} ~\|v\| \to 0.
\end{align*}
Thus, for G\^ateaux differentiability, the rate of convergence as $r \to 0$ can depend on $v$ whereas it only depends on $\|v\|$ for Fr\'echet differentiability.  G\^ateaux differentiability of $f$ at $p$ is equivalent to $ f $ being differentiable at $p$ on every finite-dimensional affine subspace going through $p$.  Fr\'echet differentiability clearly implies G\^ateaux differentiability, but the opposite does not hold in general.  In fact, Lipschitz functions $f : X \to \R$ always have points of G\^ateaux differentiability whereas they may lack any point of Fr\'echet differentiability \cite{Aronszajn1976}.
%\todo[inline]{What about Example I', Sec. 3,  Chap II in Aronszajn paper? There is a Lipschitz mapping $ \ell_1 \to \R $ that is nowhere Fréchet differentiable (Should be checked)}

In infinite-dimensional settings, one also needs to define ``almost everywhere''.  One can reinterpret Rademacher's theorem as stating that the nondifferentiability points lie in the $\sigma$-ideal of Lebesgue null sets.  Thus, one tries to prove that the nondifferentiability points of Lipschitz functions must lie in some $\sigma$-ideal $\mathcal N$.  To guarantee at least one point of differentiability, the $ \sigma $-ideal $\mathcal N$ should not contain open sets.  Results of this type have been found for both G\^ateaux differentiability and Fr\'echet differentiability although the Fr\'echet differentiability results are far harder and less broad \cite{Aronszajn1976,Mankiewicz,Lindenstrauss-Preiss,Preiss-differentiable}.

When one is considering domains that are not vector spaces, one typically assumes that the space is a metric measure space, which makes ``almost everywhere'' clear.  But resolving what a derivative means becomes trickier, and one requires some additional structure on the domain.  For general metric spaces, one needs a collection of Lipschitz charts---which may not exist---to differentiate our original function $f$ against as was done by Cheeger in \cite{Cheeger}.  Other works expanding on this theory of differentiation include \cite{Bate-Li-2,Cheeger-Kleiner4,Eriksson-Bique,Schioppa}.  In the case of Carnot-group domains, there is a group structure as well as a scaling automorphism.  This allows us to define a derivative as the limit of rescaled difference ratios converging to a homomorphism as was done by Pansu in \cite{Pansu}.

This paper seeks to bridge the infinite-dimensional setting with the Carnot-group setting.  Specifically, we will define infinite-dimensional variants of Carnot groups (which we shall call metric scalable groups) and consider G\^ateaux differentiability in this setting.  We will show that if a metric scalable group $G$ has a dense collection of finite-dimensional Carnot subgroups, then there is a nontrivial $\sigma$-ideal $\mathcal N$ so that the G\^ateaux non-differentiability points of any Lipschitz function $f : G \to \R$ form an element in $\mathcal N$.

We remark that there have been previous studies of infinite-dimensional variants of Carnot groups.  Notably, in \cite{Magnani-Rajala}, the authors defined so-called Banach homogeneous groups and showed that Lipschitz functions from $\R$ to these groups are almost everywhere differentiable in the notion of Pansu.  Metric scalable groups include these groups as special cases, but they also contain other examples.

Our investigations leave open several natural questions.  Most notably, one can ask how small the points of Fr\'echet nondifferentiability of Lipschitz functions are for metric scalable groups.   Even in Banach space domains, this is a very hard problem that depends on fine geometric properties of the norm, and so we leave this problem for the future.  One can also ask if there are infinite-dimensional variants of Cheeger differentiability.  Here, the question becomes more subtle as the differentiability charts are now infinite dimensional and so one needs to know in what Banach space they take value.  Finally, one would like to know when metric scalable groups are generated by its finite-dimensional subgroups.  Specifically, are there geometric properties (geodicity, for example) that tell us when this is the case?

%\subsection{{\color{blue}Theorems and statements that belong to the introduction} } 

We begin by introducing the notion of a scalable group that is the underlying structure of the metric groups with which we will be concerned.

\begin{definition}[Scalable group] \label{def:scalable:group}
	A  \emph{scalable group} is a pair $ (G,\delta) $, where $ G $ is a topological group and $ \delta \colon \R \times G\to  G $  is a continuous  map such that 
$\delta_\lambda:=\delta(\lambda, \cdot) \in	\mathrm{Aut}(G)$ for all $ \lambda \in \R \setminus \{0 \} $,
%	 $ \delta \colon (\R^*, \cdot)\to \mathrm{Aut}(G) $ is a group homomorphism, 
$$ \delta_\lambda \circ \delta_\mu =\delta_{\lambda \mu},\qquad\forall\, \lambda,\mu\in \R,$$
	 and $ \delta_0 \equiv e_G $, where $ e_G $ is the identity element of $ G $.
	 	 	\end{definition}
In an obvious way, in the setting of scalable groups, one can consider the notion of scalable subgroups; a subgroup $ H $ of a scalable group $ (G,\delta) $ is called a \emph{scalable subgroup} if $ \delta_\lambda(H) = H $ for all $ \lambda \in \R \setminus \{0\}$. In order to talk about Lipschitz functions, we will endow these groups with metrics that make the dilation automorphisms $\delta_\lambda$ metric scalings in the following sense.

	\begin{definition}[Metric scalable group] \label{def:metric:scalable:group}
	A {\em metric scalable group} is a triple $(G, \delta , d)$ where  $ (G,\delta) $ is a scalable group and $d$ is an admissible left-invariant distance on $G$ such that 
$$d(\delta_t(p),\delta_t(q)) = |t| d(p,q), \qquad \forall\, t\in \R.$$
\end{definition}

\noindent By {\em admissible}, we mean that the metric induces the given topology. 

Every Carnot group naturally has  a structure of a scalable group, where by Carnot group $ G $ we mean a simply connected Lie group whose Lie algebra is equipped with a stratification. The stratification is unique up to an isomorphism, see \cite{LeDonne:Carnot}, and it defines a  family of dilations on $ G $. Such a group can be metrized as a metric scalable group, and the metric is unique up to biLipschitz equivalence. On the contrary, we say that a scalable group $(G,\delta)$ has a Carnot group structure if there exists a Carnot group that is isomorphic to $G$ as a topological group and whose dilations given by the stratification coincide with $\delta$.

Given a group structure with a dilation, we can define derivatives as done by Pansu \cite{Pansu}.  As mentioned before, in the infinite-dimensional case, we need to take care of the distinction between G\^ateaux and Fr\'echet differentiability.  Here, we define G\^ateaux differentiability.

\begin{definition}[G\^ateaux differentiability]
Given two scalable groups $G$ and $H$,
a map $f:G\to H$   is {\em G\^ateaux differentiable} at a point $p\in G$ if, as $\lambda\to 0$,  the maps
$\hat f_{p,\lambda}:= \delta_\frac{1}{\lambda} \circ L_{f( p)}^{-1}\circ f\circ L_{ p} \circ \delta_\lambda$  pointwise converge to
a continuous homomorphism from $G $ to $ H$. We denote this map by $ Df_p $ and it is called the {\em G\^ateaux differential} of $ f $ at point $ p $.
\end{definition}
Notice that if $  Df_p  $ exists, then it is 1-homogeneous in the sense that $ Df_p(\delta_\lambda(u)) = \delta_\lambda( Df_p(u) ) $ for all $ \lambda \in \R $ and $ u\in G $.

We now introduce a notion requiring that our groups, which are possibly infinite dimensional, are generated by finite-dimensional Carnot subgroups. 
This will be needed to show that the $\sigma$-ideal we define later is not trivial.
%\todo[inline]{E:Do we really want this word "almost" in almost generated?}
%In what follows, we say that a scalable subgroups has a Carnot group structure
%In Section \ref{}, we recall the natural definition of having a Carnot group structure for a scalable group.

\begin{definition}[Filtration by Carnot subgroups]
We say that a   scalable group $G$ is {\em filtrated by Carnot subgroups}
if there exists a sequence $(N_m)_m$, $m\in \N$, of scalable subgroups of $G$ such that each $N_m$ has a Carnot group structure, $N_m<N_{m+1}$, and
$G$ is the closure of $\cup_{m\in\N}N_m$.
In this case, we say that 
%Given a scalable group $G$ and a
the sequence $N_m$, $m\in \N$,   is {\em a filtration by Carnot subgroups of the  scalable group $G$}.
\end{definition}

Necessarily, a metric scalable group that admits a filtration  by Carnot subgroups  
is separable.
% \todo[inline]{E: Here there was written that "a metric scalable group that admits a filtration  by Carnot subgroups  
%is complete", which is not true since: Take the sequences with finitely many non-zero elements and sup-norm.
%This was a mistake by Enrico, messing up the word separable with complete. Now the sentence has changed. After this correction, should we still keep the next sentences in red?}
Note that a complete metric scalable group $ G $ cannot be equal to its filtration  $\cup_{m\in\N}N_m$ unless $ G = N_m $ for some $ m\in \N $. Indeed, each $ N_m \sus G$ is nowhere dense and hence the union $\cup_{m\in\N}N_m$ is of first category in $ G $.
 We now define what it means for a set to be null (that is, it  lies in our $\sigma$-ideal).

\begin{definition}[Filtration-negligible]
Given a  filtration 
  $(N_m)_m$, $m\in \N$,   by Carnot subgroups    of a scalable group $G$, we say that a Borel set $\Omega\subseteq G$ is $(N_m)_m$-{\em negligible} if $\Omega$ is the countable union of Borel sets $\Omega_m$ such that 
  $$\vol_{N_m} ( N_m \cap (g  \Omega_m)  )  = 0,\qquad \forall m\in \N, \forall g\in G,$$
  where $\vol_{N_m} $ denotes any Haar measure on $N_m$.
\end{definition}  

We can now state the main theorem of this paper, which is the following generalization of Aronszajn's differentiability result \cite{Aronszajn1976}, and the one of Pansu \cite{Pansu}.

\begin{theorem} \label{th:main}
Let $G$ be a complete metric scalable group.
If $f:G\to \R$ is a Lipschitz map, then there exists a Borel subset $\Omega\subseteq G$ that is $(N_m)_m$-negligible for every    filtration $(N_m)_{m\in\N}$ by Carnot subgroups of $G$ and such that for every $p\notin \Omega$
the map $f$ is G\^ateaux differentiable at $p$.
\end{theorem}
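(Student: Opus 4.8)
The plan is to let $\Omega$ be the set of points at which $f$ fails to be G\^ateaux differentiable and to prove that $\Omega$ is Borel and is $(N_m)_m$-negligible for every filtration. Since the decomposition $\Omega=\bigcup_m\Omega_m$ in the definition of negligibility may be chosen separately for each filtration, the argument splits into two ingredients that can be treated independently: a gluing statement, asserting that differentiability of $f$ along every subgroup of a filtration forces G\^ateaux differentiability at the point, and a Fubini-type consequence of Pansu's theorem, showing that the resulting pieces are null on every coset.

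First I would record the estimate that drives everything. For the $\R$-valued $f$ the rescaled maps are
$$\hat f_{p,\lambda}(u)=\frac{f\big(p\,\delta_\lambda(u)\big)-f(p)}{\lambda},$$
and left-invariance together with $d(\delta_\lambda p,\delta_\lambda q)=|\lambda|\,d(p,q)$ give $|\hat f_{p,\lambda}(u)-\hat f_{p,\lambda}(u')|\le L\,d(u,u')$ for all $\lambda\ne 0$, where $L=\mathrm{Lip}(f)$; in particular $|\hat f_{p,\lambda}(u)|\le L\,d(u,e_G)$. Hence the family $\{\hat f_{p,\lambda}\}_\lambda$ is uniformly $L$-Lipschitz, so by an $\epsilon/3$ argument pointwise convergence of $\hat f_{p,\lambda}$ on a dense subset of $G$ upgrades automatically to pointwise (indeed locally uniform) convergence on all of $G$, with an $L$-Lipschitz limit.

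Next I would prove the gluing lemma. Fix a filtration $(N_m)_m$ and suppose that for every $m$ the restriction of $f$ to the coset $pN_m$ is Pansu differentiable at $p$. If $u\in N_m$ then $\delta_\lambda(u)\in N_m$, so $\hat f_{p,\lambda}(u)$ is precisely the Pansu difference quotient on $N_m$ and converges to $\phi_{m,p}(u)$, where $\phi_{m,p}\colon N_m\to\R$ is the Pansu differential — a group homomorphism, since the target $\R$ is abelian. Uniqueness of the limit and $N_m<N_{m+1}$ make the $\phi_{m,p}$ consistent, so they define a homomorphism on the dense subgroup $\bigcup_m N_m$ bounded by $L\,d(\cdot,e_G)$; by the equi-Lipschitz remark it extends to a continuous homomorphism $Df_p\colon G\to\R$ with $\hat f_{p,\lambda}\to Df_p$ pointwise on $G$. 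Thus $f$ is G\^ateaux differentiable at $p$, and contrapositively $\Omega\subseteq\bigcup_m B_m$ with $B_m:=\{q:\ f|_{qN_m}\text{ is not Pansu differentiable at }q\}$. Setting $\Omega_m:=\Omega\cap B_m$, I fix $g\in G$ and consider $h(n):=f(g^{-1}n)$ on $N_m$, which is Lipschitz for the restricted metric (biLipschitz equivalent to the Carnot--Carath\'eodory metric of $N_m$); unwinding the definitions identifies $N_m\cap g\Omega_m$ with a subset of the points of $N_m$ at which $h$ fails to be Pansu differentiable, a $\vol_{N_m}$-null set by Pansu's theorem. Hence $\vol_{N_m}(N_m\cap g\Omega_m)=0$ for every $g$, and since the filtration was arbitrary, $\Omega$ is negligible for all filtrations. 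Measurability is handled by fixing a countable dense subgroup of $G$ and of each $N_m$: by equi-Lipschitz, G\^ateaux differentiability (resp.\ Pansu differentiability along $N_m$) at $p$ is equivalent to the existence of the limits $\lim_{\lambda\to0}\hat f_{p,\lambda}(u_k)$ on these countable sets together with countably many homomorphism relations among them, each a Borel condition on $p$, so $\Omega$ and the $\Omega_m$ are Borel.

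The main obstacle I anticipate is twofold and largely technical: verifying that the restricted metric on each $N_m$ is biLipschitz equivalent to a genuine Carnot--Carath\'eodory metric, so that Pansu's theorem genuinely applies coset by coset, and carrying out the measurability bookkeeping that makes $\Omega$ and the pieces $\Omega_m$ Borel. By contrast, the conceptual core that is delicate in Aronszajn's linear setting — linearizing directional derivatives into a single linear functional — comes for free here: Pansu's theorem already produces a homomorphism on each finite-dimensional subgroup, and the nesting $N_m<N_{m+1}$ forces these homomorphisms to be compatible.
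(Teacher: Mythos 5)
Your proposal is correct and follows essentially the same route as the paper: apply Pansu's theorem to $u \mapsto f(g^{-1}u)$ on each $N_m$ to see that the points where the restricted limits fail to exist or fail to be homomorphisms form an $(N_m)_m$-negligible set, then use the uniform $L$-Lipschitz bound on the rescalings $\hat f_{p,\lambda}$ to upgrade convergence on the dense subgroup $\bigcup_m N_m$ to convergence on all of $G$ with a homomorphism limit. Your treatment is in fact slightly more careful than the paper's in two minor respects it leaves implicit --- the Borel measurability of $\Omega$ via countable dense subgroups, and the biLipschitz identification of the restricted metric on $N_m$ with a Carnot--Carath\'eodory metric --- but the mathematical content is the same.
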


% Say here that filtration may not exists and that the whole space is not neglible....
Be aware that a  scalable group may not admit any filtration (for example, if the group is not separable), in which case the above theorem has no content, e.g., one can take $\Omega=G$. Nonetheless, there are large classes of scalable groups that admit filtrations (see Proposition~\ref{exist:filtration} below for a general criterium and Section~\ref{sec:ex} for more examples). 
However, the first thing to clarify is that, as soon as there is one filtration, the whole scalable group cannot be negligible, as the next proposition states.

\begin{proposition}\label{Prop:no:interior}
If  
  $(N_m)_{m\in\N}$ is a  filtration  by Carnot subgroups    of a complete metric scalable group $G$ and 
  $\Omega\subseteq G$   is a Borel $(N_m)_m$-negligible set, then $\Omega$ has empty interior. 
\end{proposition}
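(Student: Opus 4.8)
The plan is to argue by contradiction. Suppose $\Omega$ has nonempty interior, so that $B(x_0,r)\sus\Omega$ for some $x_0\in G$ and $r>0$, and write $\Omega=\cup_{m}\Omega_m$ as in the definition of negligibility. It then suffices to produce a Borel probability measure $\mu$ on $G$ with $\mu(B(x_0,r))=1$ and $\mu(\Omega_m)=0$ for every $m\in\N$: since $B(x_0,r)\sus\Omega$, these two facts give the contradiction $1=\mu(B(x_0,r))\le\mu(\Omega)\le\sum_m\mu(\Omega_m)=0$. The measure $\mu$ I will build is one whose conditional measures along the \emph{left} cosets of each $N_m$ are absolutely continuous with respect to the coset measure induced by $\vol_{N_m}$; the hypothesis $\vol_{N_m}(N_m\cap g\Omega_m)=0$ for all $g$ then kills $\Omega_m$ slice by slice.

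To construct $\mu$ I would realize it as the law of a random infinite product $X=x_0\cdots Y_3Y_2Y_1$, with the factors taken in \emph{decreasing} index order, where $Y_j$ is an independent random element of $N_j$ whose law $\mu_j$ is absolutely continuous with respect to $\vol_{N_j}$ and supported in a small, relatively compact neighborhood of the identity in $N_j$. The decreasing order is essential: when I later isolate $Y_m$, everything to its right, namely $Y_{m-1}\cdots Y_1$, lies in $N_{m-1}\sus N_m$, so that $X$ ranges over a genuine left coset of $N_m$ as $Y_m$ varies. The difficulty is that this order fights convergence, since $d$ is only left invariant and left translation by a small element need not move far-away points a small amount. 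I would resolve this by choosing the supports inductively: having fixed the (deterministic, relatively compact) supports of $Y_1,\dots,Y_{n-1}$, the partial product $P_{n-1}=Y_{n-1}\cdots Y_1$ ranges in a fixed compact set $K_{n-1}$, and by joint continuity of multiplication together with compactness of $K_{n-1}$ I can choose the support of $Y_n$ in $N_n$ small enough that $d(yp,p)<2^{-n-1}r$ for all $y$ in that support and all $p\in K_{n-1}$. Then $d(P_n,P_{n-1})<2^{-n-1}r$ surely, so $(P_n)_n$ is Cauchy and, by completeness of $G$, converges to some $P_\infty$ with $d(P_\infty,e)\le r/2<r$; hence $X=x_0P_\infty\in B(x_0,r)$ and $\mu(B(x_0,r))=1$. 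The factors stay independent and each $\mu_j$ stays absolutely continuous, which is all the final step needs.

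It remains to verify $\mu(\Omega_m)=0$. Fix $m$ and write $X=A\,Y_m\,w$, where $A=x_0\cdot\lim_n Y_n\cdots Y_{m+1}$ is a measurable function of $(Y_j)_{j>m}$ and $w=Y_{m-1}\cdots Y_1\in N_m$ is a function of $(Y_j)_{j<m}$. Because the $Y_j$ are independent, Fubini's theorem lets me integrate in $Y_m$ first for fixed values of the remaining variables, and
\[
\int_{N_m}\mathbf 1_{\Omega_m}(A\,y\,w)\,d\mu_m(y)=\mu_m\big(N_m\cap A^{-1}\Omega_m w^{-1}\big).
\]
Since $\mu_m\ll\vol_{N_m}$, this vanishes as soon as $\vol_{N_m}(N_m\cap A^{-1}\Omega_m w^{-1})=0$, and this last equality follows from negligibility together with unimodularity of $N_m$: as $N_m$ is nilpotent (being Carnot) right translation by $w\in N_m$ preserves $\vol_{N_m}$ and carries $N_m\cap A^{-1}\Omega_m w^{-1}$ onto $N_m\cap A^{-1}\Omega_m$, which has $\vol_{N_m}$-measure zero by $\vol_{N_m}(N_m\cap g\Omega_m)=0$ with $g=A^{-1}$. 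Integrating out the remaining variables gives $\mu(\Omega_m)=0$ for every $m$, completing the contradiction.

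I expect the main obstacle to be exactly the tension produced by noncommutativity in the second step: the index ordering that yields honest left cosets (so that the hypothesis applies) is the ordering for which convergence of the product is not automatic, so one must simultaneously engineer convergence by an adaptive, completeness-based choice of supports and absorb the leftover right factor $w$ using unimodularity of the Carnot subgroups. A secondary point to handle carefully is the measurability and legitimacy of the disintegration, which is the reason I keep the $Y_j$ genuinely independent, so that the one-variable Fubini slice is precisely the absolutely continuous marginal $\mu_m$ rather than an entangled conditional.
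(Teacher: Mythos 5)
Your proof is correct and follows essentially the same strategy as the paper's: a probability measure realized as an infinite product in decreasing index order, with supports chosen adaptively via compactness and completeness so the product converges into the given open set (this is exactly the content of Lemmas~\ref{l:compact-perturb} and~\ref{l:box}), followed by a Fubini slicing along left cosets of $N_m$ against a measure absolutely continuous with respect to $\vol_{N_m}$. The only implementation difference is that the paper uses one-dimensional factors $\psi_i(t)=\exp(t\log v_i)$ along a basis of $\Lie(N_m)$, so the first $k_m$ coordinates form exponential coordinates of the second kind on $N_m$ and the whole $N_m$-block is integrated at once with no leftover factor, whereas you use a single full-dimensional absolutely continuous factor $Y_m$ per subgroup and absorb the residual right factor $w=Y_{m-1}\cdots Y_1\in N_m$ by unimodularity of the Carnot group $N_m$ --- an extra but perfectly valid step that the paper's choice of coordinates renders unnecessary.
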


As mentioned before, this allows us to conclude that, for groups admitting at least one filtration by Carnot subgroups, every Lipschitz function $f : G \to \R$ has at least one point of G\^ateaux differentiability.

Finally, we would like to have geometric conditions that tell us when our group admits filtrations by Carnot subgroups.  For a scalable group $G$ define its {\em first layer} as 
$$V_1(G) := \{ p\in G : t\in \R \mapsto \delta_t (p) \text{ is a one-parameter subgroup}\}, $$
where by one-parameter subgroup we mean that for all $ t,s \in \R$,
\[
\delta_{t+s}(p) = \delta_t(p)\delta_s(p).
\]
%We say that $V_1(G)$ {\em generates $G$} if the closure of the group generated by $V_1(G)$ equals $G$.
Note that if $ p\in V_1(G)  $, then $ \delta_r(p) \in V_1(G) $ for all $ r\in \R $, since
	\[
	\delta_{t+s}(\delta_r(p)) = \delta_{t r+s  r}(p) = \delta_{t r}(p)\delta_{s r}(p) = \delta_t(\delta_r(p))\delta_s(\delta_r(p)),
	\]
We say that a set \emph{$ A \sus G $ generates $ G $ as a scalable group} or simply that \emph{$ A $ generates $ G $} if $ G $ is the closure of the group generated by $ \{\delta_t(a) : a\in A, \, t\in \R \} $.  Note that $V_1(G)$ is completely analogous to the generating first layer of a finite-dimensional Carnot group.  Moreover, the following proposition holds.

%\begin{proposition}\label{exist:filtration}
%Let $G$ be a scalable group that is nilpotent and its first layer $V_1(G)$  generates $G$. If $ V_1(G) $ is separable then $G$ admits a  filtration  by Carnot subgroups.
%\end{proposition}
\begin{proposition}\label{exist:filtration}
 Let $G$ be a scalable group. If $G$ admits a filtration by Carnot subgroups then $ V_1(G) $ generates $G$ as a scalable group. On the contrary, if $G$ is nilpotent, $V_1(G)$ is separable, and  $ V_1(G) $ generates $G$ as a scalable group, then $G$ admits a filtration by Carnot subgroups.
\end{proposition}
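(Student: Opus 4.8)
The plan is to prove the two implications separately. For the forward implication, suppose $(N_m)_m$ is a filtration by Carnot subgroups. I would use the fundamental fact that in a finite-dimensional Carnot group the first stratum generates the whole group, so each $N_m$ is generated, as a group, by $\{\delta_t(p): p\in V_1(N_m),\, t\in\R\}$. Since $N_m$ is $\delta$-invariant and its Carnot dilations coincide with $\delta|_{N_m}$, one checks that $V_1(N_m)=V_1(G)\cap N_m\subseteq V_1(G)$: a point $p\in N_m$ for which $t\mapsto\delta_t(p)$ is a one-parameter subgroup gives such a subgroup staying inside $N_m$, and conversely. Hence the group generated by $\{\delta_t(p):p\in V_1(G),\,t\in\R\}$ contains every $N_m$, and its closure therefore contains $\overline{\bigcup_m N_m}=G$; that is, $V_1(G)$ generates $G$.

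For the converse, assume $G$ is nilpotent of some step $s$, with $V_1(G)$ separable and generating. I would fix a countable dense subset $\{a_i\}_{i\in\N}\subseteq V_1(G)$ and set
$$ N_m := \overline{\langle\,\delta_t(a_i): 1\le i\le m,\ t\in\R\,\rangle}. $$
Writing $\gamma_i(t):=\delta_t(a_i)$, the identity $\delta_\lambda\gamma_i(t)=\gamma_i(\lambda t)$ shows that the generating set, and hence $N_m$, is $\delta$-invariant, so each $N_m$ is a scalable subgroup with $N_m<N_{m+1}$; moreover $\delta_\lambda\to\delta_0\equiv e_G$ pointwise as $\lambda\to 0$, so $\delta$ contracts $N_m$ to the identity.

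The core step is to show that each $N_m$ is a finite-dimensional, simply connected nilpotent Lie group, after which the contracting automorphisms $\delta_\lambda$ provide a stratification and exhibit $N_m$ as a Carnot group whose dilations are $\delta$. Finite-dimensionality should come from the interaction of nilpotency and continuity: the abstract subgroup $\langle\gamma_1,\dots,\gamma_m\rangle$ is nilpotent of step $\le s$, so by the collecting process every element is a product of iterated commutators of the $\gamma_i$ of length at most $s$, of which only finitely many occur. Because $\delta$ is continuous, each commutator map $(t_1,\dots,t_k)\mapsto[\gamma_{i_1}(t_1),\dots]$ is continuous and, being a homomorphism in each variable modulo higher commutators, is $\R$-multilinear; hence each graded piece is finite-dimensional and $\langle\gamma_1,\dots,\gamma_m\rangle$ is a continuous homomorphic image of the free step-$s$ nilpotent Lie group $F_{m,s}$ on $m$ generators. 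Passing to closures, a structure theorem for locally compact groups carrying a contractive group of automorphisms (in the style of Siebert) should identify $N_m$ as a simply connected nilpotent Lie group with a $\delta$-grading, i.e. a Carnot group.

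Finally I would check density: since $\bigcup_m N_m$ contains the group $\Gamma_0$ generated by $\{\delta_t(a_i):i\in\N,\,t\in\R\}$, and $\{a_i\}$ is dense in $V_1(G)$ while $\delta$ and the group operations are continuous, $\overline{\Gamma_0}$ coincides with the closure of the group generated by $\{\delta_t(a):a\in V_1(G),\,t\in\R\}$, which is $G$ by hypothesis; thus $(N_m)_m$ is a filtration of $G$ by Carnot subgroups. The main obstacle is exactly the core step of the converse: upgrading ``generated by finitely many continuous one-parameter subgroups, nilpotent of bounded step'' into a genuine finite-dimensional Carnot structure. Nilpotency bounds the commutator length and continuity forces the commutator maps to be multilinear, ruling out the wild, Hamel-basis behaviour that bare additivity would allow; but assembling these pieces into a bona fide Lie group, so that $N_m$ really is a quotient of $F_{m,s}$, is where the analytic input via the contracting-dilation structure theorem is indispensable.
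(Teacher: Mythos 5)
Your forward implication is correct and is essentially the paper's own argument: $V_1(N_m)\subseteq V_1(G)$, the first stratum of each Carnot subgroup generates it, and density of $\bigcup_m N_m$ finishes. Your converse also follows the paper's skeleton exactly: take a dense sequence $(a_i)$ in $V_1(G)$, let $N_m$ be the scalable subgroup generated by $a_1,\dots,a_m$, and invoke a core lemma saying that a nilpotent scalable group generated by finitely many elements of $V_1$ is a Carnot group (this is Proposition~\ref{p:isCarnotSec2} in the paper, which is likewise proved via Siebert's theorem, Theorem~\ref{t:siebert}).

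The genuine gap is in your justification of that core lemma. Siebert's theorem takes \emph{local compactness} as a hypothesis, so your plan --- ``assembling these pieces into a bona fide Lie group \dots the analytic input via the contracting-dilation structure theorem is indispensable'' --- is circular as stated: the structure theorem cannot produce the Lie structure until you have independently proved that $N_m$ is locally compact, and that is precisely the hard content (the paper spends all of Section~\ref{s:Carnots:generated} on it). Your sketch of finite-dimensionality also does not go through as written: there are no ``graded pieces'' before Siebert applies, only the lower central series, and the additivity of commutator maps holds only modulo higher-order terms, so multilinearity must be extracted by an induction on the nilpotency step. Concretely, the paper first works over $\Q$ (where the dilations give genuine $q$-th powers on one-parameter subgroups, Lemma~\ref{l:dilation}) to put a finite-dimensional $\Q$-vector space structure on the last layer $G^{(s)}$ (Lemmas~\ref{l:Gs:vectorspace} and~\ref{findim}), then upgrades it to a finite-dimensional \emph{real} topological vector space and shows it is closed, using that finite-dimensional Hausdorff TVS are locally compact (Theorem~\ref{wiki}) and that locally compact subgroups are closed (Lemmas~\ref{closedgp} and~\ref{hsvsp}); local compactness of $N_m$ then follows by induction on the step via the quotient criterion (Theorem~\ref{t:loccpt:quotient}). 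Relatedly, your claim that $\langle\gamma_1,\dots,\gamma_m\rangle$ is a continuous homomorphic image of the free nilpotent Lie group $F_{m,s}$ is unjustified: a priori you only get an abstract homomorphism from the free nilpotent group (or its $\Q$-powered version, using the dilations), and extending it continuously over $\R$-parameters is again exactly the missing closure-and-finite-dimensionality analysis, not something that can be assumed. Finally, note that after Siebert one must still check both that the degree-one layer of the resulting grading generates the Lie algebra (so the grading is a stratification) and that $\delta_t$ coincides with the Carnot dilations; you assert this but the verification, via the eigenvalue statement in Theorem~\ref{t:siebert} and the identity $\log\delta_t(x)=tX$ for $x\in V_1$, is part of the paper's proof.
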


 %\todo[inline]{Write the structure of the paper.}
 In Section \ref{s:examples}, we give an example of a scalable group that admits filtrations by Carnot subgroups but is not nilpotent.

We begin by proving Proposition \ref{exist:filtration} in Section 2. The crucial observation is that any nilpotent group generated by finitely many elements of $V_1(G)$ has a structure of a Carnot group. In Section 3 we make a closer study of filtration-negligible sets among proving Proposition \ref{Prop:no:interior}. Section 4 is devoted to the proof of Theorem \ref{th:main} and finally in Section 5 we introduce a class of metric scalable groups that admit filtrations by Carnot subgroups.

\section{Carnot groups generated}
\label{s:Carnots:generated}
%Clarify what it means that a scalable subgroup  has a Carnot group structure???

In connection with Proposition \ref{exist:filtration}, the aim of this section is to prove the following proposition.
\begin{prop}
	\label{p:isCarnotSec2}
	Let $ (G,\delta) $ be a scalable group that is generated by $ x_1, \ldots, x_r \in V_1(G) $, with $r\in \N$. If $ G $ is nilpotent, then it is the scalable group underlying some Carnot group.
\end{prop}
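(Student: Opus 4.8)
The plan is to realize $G$ as a quotient of a free nilpotent Lie group and to transport the Carnot structure along this quotient. Let $s$ be the nilpotency step of $G$ and let $F$ be the free step-$s$ nilpotent Lie group on $r$ generators $\exp(e_1),\dots,\exp(e_r)$; this is a finite-dimensional Carnot group whose dilations $\delta^F_\lambda$ preserve the stratification with $e_1,\dots,e_r$ spanning the first layer, so that $\delta^F_\lambda\exp(te_i)=\exp(\lambda t\, e_i)$. Since $x_i\in V_1(G)$, each $t\mapsto\delta_t(x_i)$ is a one-parameter subgroup of $G$, and these will play the role of the $\exp(te_i)$. The goal is to build a continuous, dilation-equivariant homomorphism $\Phi\colon F\to G$ with $\Phi(\exp(te_i))=\delta_t(x_i)$ and then to analyse it.

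First I would construct $\Phi$. Write $G_0$ for the abstract subgroup generated by $\{\delta_t(x_i):t\in\R,\ 1\le i\le r\}$, which is dense in $G$ by hypothesis. The key preliminary is that $G_0$ is torsion-free, and hence admits a Mal'cev $\R$-completion in which every element has unique real powers. To see torsion-freeness, note that each lower-central quotient $\mathrm{gr}_k(G_0)$ is an abelian group generated by finitely many one-parameter subgroups (the classes of the $k$-fold commutators of the $x_i$) and carries an induced $\R$-scaling; an abelian scalable group so generated is a continuous quotient of some $\R^{N}$ and can have no compact factor, since $\mathrm{Aut}$ of a torus is discrete and thus incompatible with a continuous dilation with $\delta_0$ trivial. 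Hence each $\mathrm{gr}_k(G_0)$ is a finite-dimensional real vector space, and a nilpotent group with torsion-free graded quotients is torsion-free. Now the universal property of $F$, viewed as the free object among $\R$-powered (radicable, torsion-free) nilpotent groups of step $\le s$, supplies a homomorphism $\Phi\colon F\to G_0^{\R}$ sending $\exp(e_i)$ to $x_i$; since $\delta_t(x_i)$ and the Mal'cev power $x_i^{\,t}$ are both the unique one-parameter subgroup through $x_i$, they coincide, so $\Phi(\exp(te_i))=\delta_t(x_i)$ and, as the first layer generates $F$, the image is exactly $G_0\subseteq G$. Equivariance $\Phi\circ\delta^F_\lambda=\delta_\lambda\circ\Phi$ is checked on the generators and extends since both sides are homomorphisms, and continuity of $\Phi$ follows by writing $F$ as the continuous open image of a Euclidean space under a fixed product pattern of the first-layer one-parameter subgroups, on whose fibres $\Phi$ is constant.

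Next, let $K=\ker\Phi$. It is closed, as $G$ is Hausdorff, and dilation-invariant by equivariance; moreover dilation-invariance forces $K$ to be connected, since for $k\in K$ the path $\lambda\mapsto\delta^F_\lambda(k)$ joins $k$ to the identity inside $K$. Thus $K$ is a closed connected graded normal subgroup, and $F/K$ is again a finite-dimensional Carnot group, with dilations induced by $\delta^F$. The induced map $\bar\Phi\colon F/K\to G$ is a continuous, injective, dilation-equivariant homomorphism with dense image $G_0$, and to finish I would upgrade it to a homeomorphism onto $G$. Fix a homogeneous norm on $F/K$, with compact unit ball and unit sphere. Continuity of $\bar\Phi^{-1}$ at the identity is the crux: if $\bar\Phi(h_\alpha)\to e_G$ but $h_\alpha\not\to e$, then either a subnet of $(h_\alpha)$ is bounded and, by finite-dimensionality, subconverges to some $h_\infty\ne e$ with $\bar\Phi(h_\infty)=e_G$, contradicting injectivity; or $\|h_\alpha\|\to\infty$, and rescaling $g_\alpha=\delta^{F/K}_{1/\|h_\alpha\|}(h_\alpha)$ onto the compact sphere gives a subnet $g_\alpha\to g\ne e$, while $\bar\Phi(g_\alpha)=\delta_{1/\|h_\alpha\|}(\bar\Phi(h_\alpha))\to\delta_0(e_G)=e_G$ by joint continuity of $\delta$, forcing $\bar\Phi(g)=e_G$ and again contradicting injectivity. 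Thus $\bar\Phi$ is a homeomorphism onto $G_0$ with its subspace topology, so $G_0$ is a locally compact, hence closed, subgroup of $G$; being dense it equals $G$. This identifies $G$, intertwining dilations, with the Carnot group $F/K$, which is the claim.

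The main obstacle is the construction of $\Phi$ in the second step: because $G$ is only a topological scalable group, not a priori a Lie group, the standard universal property of the free nilpotent Lie group is not directly available, and one must first extract enough algebraic regularity from the scalable structure — concretely, the torsion-freeness and unique $\R$-divisibility of $G_0$ — to guarantee that the assignment $\exp(te_i)\mapsto\delta_t(x_i)$ extends to a genuine homomorphism. Everything afterward (the gradedness of $K$, the Carnot structure on $F/K$, and the passage from dense image to isomorphism) is then driven cleanly by the continuity of the dilation map.
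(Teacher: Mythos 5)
Your overall architecture (realize $G$ as a Carnot quotient $F/K$ of the free nilpotent group $F$, then upgrade the injective equivariant map $\bar\Phi\colon F/K\to G$ to a homeomorphism) is genuinely different from the paper's route, and your second half is sound: the rescaling-to-the-unit-sphere argument for continuity of $\bar\Phi^{-1}$ at the identity is correct, and the conclusion that the locally compact dense image is closed and hence all of $G$ matches the paper's own Lemma~\ref{closedgp}. The paper instead proves directly, via commutator identities, that the last lower-central layer $G^{(s)}$ is a finite-dimensional $\Q$-vector space (Lemmas~\ref{l:dilation}, \ref{l:Gs:vectorspace}, \ref{findim}), upgrades it to a finite-dimensional real vector space by a closure argument, deduces local compactness of $G$ by induction on the step, and then invokes Siebert's theorem (Theorem~\ref{t:siebert}) to get the Lie and Carnot structure. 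The trouble is that your first half --- the construction of $\Phi$ --- has a genuine hole exactly where you flag the difficulty. The decisive gap is the claim that $\Phi(\exp(te_i))=\delta_t(x_i)$ because ``$\delta_t(x_i)$ and the Mal'cev power $x_i^{\,t}$ are both the unique one-parameter subgroup through $x_i$.'' The Mal'cev $\R$-completion $G_0^{\R}$ is an abstract $\R$-powered group with no given topology, and uniqueness of one-parameter subgroups through a point is a \emph{continuity} statement: two homomorphisms $\R\to G_0^{\R}$ agreeing on $\Q$ need not agree on $\R$ (already for the abelian group $\R^2$, a $\Q$-linear map built from a Hamel basis gives a one-parameter subgroup through $x$ agreeing with $t\mapsto tx$ only on $\Q$). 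Uniqueness of roots does give $x_i^{\,q}=\delta_q(x_i)$ for $q\in\Q$, but for irrational $t$ the identification fails without a compatible topology on $G_0^{\R}$; and endowing $G_0^{\R}$ with a locally compact group topology for which $G_0\hookrightarrow G_0^{\R}$ is an embedding is essentially the local-compactness statement the paper labors to prove. Without this identification, the image of $\Phi$ need not be $G_0$, equivariance fails on the generators, and your continuity argument (constancy on fibres of the fixed product pattern) has nothing to rest on.

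There is a second, related gap in the torsion-freeness step that feeds the Mal'cev construction. Your argument that each $\mathrm{gr}_k(G_0)$ is a finite-dimensional real vector space presupposes (i) that the classes of simple commutators with one dilated slot are one-parameter subgroups generating $\mathrm{gr}_k(G_0)$ --- this is precisely the nontrivial commutator machinery of the paper's Lemma~\ref{l:dilation} and Lemma~\ref{l:Gs:vectorspace}, which you assert rather than prove --- and (ii) that the quotient topology on $G_0^{(k)}/G_0^{(k+1)}$ is Hausdorff, which is unknown at this stage since $G_0^{(k+1)}$ need not be closed in $G_0^{(k)}$; the classification of quotients of $\R^N$ as $\R^a\times\T^b$, and hence the torus-automorphism argument, is unavailable for non-Hausdorff quotients. (Torsion-freeness alone is salvageable more cheaply: by Lemma~\ref{l:dilation} the automorphism $\delta_m$ acts on $\mathrm{gr}_k(G_0)$ as $y\mapsto y^{m^k}$, so $\mathrm{gr}_k(G_0)$ has no $m^k$-torsion for any $m\geq 2$; but ``finite-dimensional real vector space'' is far more than torsion-freeness, and it is the real content.) In short: your endgame is a clean alternative to the paper's use of Siebert's theorem, but the existence of a continuous, dilation-equivariant $\Phi\colon F\to G$ is not established by the proposal, and repairing it appears to require the same commutator and local-compactness analysis that constitutes the paper's proof.
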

We begin by fixing the notation in Section~\ref{ss:notation}. Analogously to Definition~\ref{def:scalable:group}, one can consider \emph{$ \Q $-scalable groups} for which the dilation automorphism is defined on the rationals: $ \delta \colon \Q \times G \to G $. In Section~\ref{ss:QscalableGroups} we prove that if $ G $ is a nilpotent $ \Q$-scalable group of step $ s $ that is generated by finitely many elements, then $ G^{(s)} $ has a structure of finite-dimensional $ \Q $-vector space. Here $ G^{(s)} $ is the last element of the lower central series of the nilpotent group $ G $. Some of the simple commutator identities that we use are proved in Appendix~\ref{ss:commidentities}.

In Section~\ref{ss:proof:isCarnot} we use the result of Section~\ref{ss:QscalableGroups} to show that under the assumption that $ G $ is a nilpotent scalable group generated by finitely many elements, the last layer $ G^{(s)} $ is a real finite-dimensional topological vector space, and in particular it is locally compact. Consequently, see Theorem~\ref{t:loccpt:quotient}, also $ G $ is locally compact. The proof of Proposition~\ref{p:isCarnotSec2} is concluded by the result of Siebert (Theorem~\ref{t:siebert}), which says that any connected, locally compact, contractible group is a positively graduable Lie group. Namely, we find a graduation $ \bigoplus_{t > 0}V_t $ of Lie$(G) $ such that $ V_1 $ generates Lie$ (G) $, and hence $ \bigoplus_{t > 0}V_t $ is a stratification of $ G $.
\subsection{Notation}
\label{ss:notation}
For a group $ G $ and elements $ g,h\in G $ we define the group commutator by
\[
[g,h] \coloneqq ghg^{-1}h^{-1}.
\]
The elements of lower central series are defined by $ G^{(1)} = G $ and $  G^{(k)} $ is the group generated by $ [G,G^{(k-1)}] $. We say that $ G $ is nilpotent of step $ s $ if $ G^{(s+1)} = \{e\} $ but $ G^{(s)} \neq \{e\} $. Notice that in this case $ G^{(s)}  $ is an abelian subgroup of $ G $. We denote by $ Z(G) $ the center of $ G $.

We follow the terminology of \cite{Khukhro}, and define recursively \emph{commutators of weight} $ k $ for $ k\in \N $ in variables $ x_1,x_2, \ldots $ as formal bracket expressions. The letters $ x_1,x_2 \ldots $ are commutators of length one; inductively, if $ c_1, $  $c_2 $ are commutators of weight $ k_1 $ and $ k_2 $, then $ [c_1,c_2] $ is a commutator of weight $ k_1+k_2 $. We also call the commutator of the form $ [x_1,[x_2,\ldots,[x_{k-1},x_k]\ldots]] $ a \emph{simple commutator} of elements $ x_1,\ldots,x_k $. 

During this section, it is useful to keep in mind the following lemma.
\begin{lemma}[Lemma 3.6(c) in \cite{Khukhro}]
	\label{l:simple:commutators}
	Let $ G $ be a group and $ M\sus G $ a subset of $ G $. If $ M $ generates $ G $ as a group, then $ G^{(k)} $ is generated by simple commutators of weight $ \geq k $ in the elements $ m^{\pm 1} $, $ m\in M $.
\end{lemma}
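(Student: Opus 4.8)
The plan is to argue by induction on $k$, using the two standard commutator identities recorded in Appendix~\ref{ss:commidentities}, namely
\[
[ab,c] = a[b,c]a^{-1}\,[a,c] \ja [a,bc] = [a,b]\, b[a,c]b^{-1},
\]
together with the elementary observation that prepending a single generator to a simple commutator raises its weight by exactly one. Set $ M^{\pm}:=\{m^{\pm 1}:m\in M\} $ and let $ N_k \sus G $ be the subgroup generated by all simple commutators of weight $ \geq k $ in the elements of $ M^{\pm} $; the goal is to prove $ N_k = G^{(k)} $. The inclusion $ N_k \sus G^{(k)} $ is immediate, since a simple commutator of weight $ w $ lies in $ G^{(w)}\sus G^{(k)} $ whenever $ w\geq k $. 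The base case $ k=1 $ holds because $ M^{\pm} $ generates $ G=G^{(1)} $ and its elements are simple commutators of weight $ 1 $.

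For the inductive step I assume $ G^{(k)} = N_k $ and prove $ G^{(k+1)} = N_{k+1} $. The first key point is that $ N_{k+1} $ is \emph{normal} in $ G $. Indeed, for a generator $ m\in M^{\pm} $ and a simple commutator $ c $ of weight $ w\geq k+1 $ one has $ mcm^{-1} = [m,c]\,c $, where $ [m,c] $ is a simple commutator of weight $ w+1\geq k+2 $; both factors thus lie in $ N_{k+1} $, so $ mcm^{-1}\in N_{k+1} $, and the identical computation with $ m^{-1} $ gives $ m^{-1}cm\in N_{k+1} $. Hence conjugation by each $ m^{\pm 1} $ carries every generator of $ N_{k+1} $ back into $ N_{k+1} $; as the $ m^{\pm 1} $ generate $ G $, this yields $ N_{k+1}\normal G $.

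Next I expand the generators of $ G^{(k+1)} $, which by definition is generated by the commutators $ [g,h] $ with $ g\in G $ and $ h\in G^{(k)} = N_k $. Writing $ g=u_1\cdots u_p $ with $ u_i\in M^{\pm} $ and $ h=s_1\cdots s_q $ with each $ s_j $ a simple commutator of weight $ \geq k $ or the inverse of one, repeated use of the two displayed identities rewrites $ [g,h] $ as a product of $ G $-conjugates of the terms $ [u_i,s_j] $. If $ s_j $ is a simple commutator $ c $ of weight $ w\geq k $, then $ [u_i,c] $ is a simple commutator of weight $ w+1\geq k+1 $; if $ s_j=c^{-1} $, the identity $ [u_i,c^{-1}] = c^{-1}[u_i,c]^{-1}c $ exhibits $ [u_i,s_j] $ as a conjugate of the inverse of a simple commutator of weight $ \geq k+1 $. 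In either case $ [u_i,s_j] $, and hence every $ G $-conjugate of it, lies in the normal subgroup $ N_{k+1} $. Therefore $ [g,h]\in N_{k+1} $, giving $ G^{(k+1)}\sus N_{k+1} $ and completing the induction.

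I expect the main obstacle to be the bookkeeping with the conjugations produced by the two identities: a priori the expansion only places $ [g,h] $ in the \emph{normal closure} of the simple commutators of weight $ \geq k+1 $, rather than in the subgroup $ N_{k+1} $ that they generate. The normality of $ N_{k+1} $ established in the second step is precisely what closes this gap, and its proof rests crucially on the weight-raising property, whereby conjugating a weight-$ w $ simple commutator by a generator introduces only an extra factor of weight $ w+1 $ that is again available in $ N_{k+1} $.
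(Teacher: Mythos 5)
Your proof is correct. Note, though, that the paper does not prove this lemma at all: it is imported as a black box from \cite{Khukhro} (Lemma 3.6(c)), so there is no in-paper argument to compare against; your induction on $k$ --- weight-raising when a generator is prepended to a simple commutator, normality of the subgroup $N_{k+1}$ generated by simple commutators of weight $\geq k+1$ (which, as you rightly stress, is what upgrades membership in a normal closure to membership in $N_{k+1}$ itself), and expansion of $[g,h]$ via the product identities, with $[u,c^{-1}]=c^{-1}[u,c]^{-1}c$ handling inverses --- is exactly the standard textbook argument given in that reference. One cosmetic correction: the identities you attribute to Appendix~\ref{ss:commidentities} appear there in the equivalent forms $[xy,z]=[x,[y,z]][y,z][x,z]$ and $[z,xy]=[z,x][z,y][[y,z],x]$, not literally as the conjugation-form identities $[ab,c]=a[b,c]a^{-1}[a,c]$ and $[a,bc]=[a,b]\,b[a,c]b^{-1}$ you state, though the two pairs differ only by rewriting $a[b,c]a^{-1}$ as $[a,[b,c]][b,c]$.
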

We also write down the definition of a vector space to ease the discussion later on.
\begin{defn}
	\label{def:qstr} Let $ \mathbb{K} $ be a field.
	A $  \mathbb{K}  $-\emph{vector space} is an abelian group $G $ equipped with an operation $\sigma \colon  \mathbb{K}  \times G \to G $ satisfying
	\begin{enumerate}[(i)]
		\item $ \sigma(q,\sigma(p,g)) = \sigma(qp,g) $
		\item $ \sigma(q,g)\sigma(p,g) = \sigma(q+p,g) $ \label{ii}
		\item $ \sigma(1,g) =g$
		\item $ \sigma(q,g)\sigma(q,h) = \sigma(q,gh) $,
	\end{enumerate}
	for all $ q,p \in  \mathbb{K}  $ and $ g,h\in G $. We denote the map $ \sigma(q,\cdot) $ by $ \sigma_q $.
\end{defn}
%\begin{rmk}
%	From (\ref{ii}) it follows that $ \sigma_0(p) = e_G $ and therefore, by (\ref{ii}) again, $ \inv{\sigma_q(g)} = \sigma_{-q}(p) $.
%\end{rmk}
\subsection{$ \Q $-scalable groups}
\label{ss:QscalableGroups}
In this section, $ G $ will always denote a nilpotent $ \Q $-scalable group of step $ s $ with dilations $ \delta_t $, generated by $ x_1,\ldots,x_r \in V_1(G) $. We will show that the last element $ G^{(s)} $ of the lower central series admits a structure of finite-dimensional $ \Q $-vector space.
\begin{lemma}
	\label{l:dilation}
	Let $ m\in \N $ and $ 2\leq k \leq s $. If $ y \in G^{(k)}$ is a simple commutator of $ k$ elements of $ V_1(G) $, then $ \delta_m(y) = hy^{m^k} $ for some $ h\in G^{(k+1)} $.
\end{lemma}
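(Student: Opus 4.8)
The plan is to induct on the weight $k$, exploiting two structural facts: that $\delta_m$ restricts to the $m$-th power map on $V_1(G)$, and that $\delta_m$ is a group automorphism and hence commutes with the formation of commutators. The whole computation then takes place in the central quotient $G^{(k)}/G^{(k+1)}$, where commutators of weight $k$ become multilinear.

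First I would record the base fact: if $z\in V_1(G)$ and $m\in\N$, then $\delta_m(z)=z^m$. Indeed $\delta_1=\id$ (since $\delta_1\circ\delta_1=\delta_1$ and $\delta_1\in\aut(G)$), and the one-parameter subgroup property $\delta_{t+s}(z)=\delta_t(z)\delta_s(z)$ applied $m$ times gives $\delta_m(z)=\delta_1(z)^m=z^m$. This is exactly the claim for ``weight $1$'' with trivial correction term, and it is the one place where the hypothesis $m\in\N$ (rather than $m\in\Q$) is used.

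For the inductive step, write a simple commutator of $k$ elements as $y=[z_1,w]$ with $z_1\in V_1(G)$ and $w=[z_2,\ldots,z_k]$ a simple commutator of $k-1$ elements, so $w\in G^{(k-1)}$. Since $\delta_m\in\aut(G)$, we have $\delta_m(y)=[\delta_m(z_1),\delta_m(w)]=[z_1^m,\,g\,w^{m^{k-1}}]$, where $g\in G^{(k)}$ comes from the inductive hypothesis $\delta_m(w)=g\,w^{m^{k-1}}$. Working modulo $G^{(k+1)}$, the factor $g\in G^{(k)}$ may be discarded: expanding $[z_1^m,\,g\,w^{m^{k-1}}]$ by the identity $[a,bc]=[a,b]\cdot{}^b[a,c]$ and using $[G,G^{(k)}]\sus G^{(k+1)}$ shows $\delta_m(y)\equiv[z_1^m,w^{m^{k-1}}]\pmod{G^{(k+1)}}$.

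The heart of the argument---and the step I expect to be the main obstacle---is the bilinearity of the commutator map modulo $G^{(k+1)}$: if $a\in G^{(1)}$, $b\in G^{(k-1)}$ and $i,j\in\N$, then
$$[a^i,b^j]\equiv[a,b]^{\,ij}\pmod{G^{(k+1)}}.$$
This follows by two nested inductions from the expansions $[xy,z]={}^x[y,z]\cdot[x,z]$ and $[x,yz]=[x,y]\cdot{}^y[x,z]$ (the commutator identities collected in Appendix~\ref{ss:commidentities}), together with the observation that $[a,b]\in G^{(k)}$ is central modulo $G^{(k+1)}$, so every conjugation ${}^{a^{\ell}}(\cdot)$ or ${}^{b^{\ell}}(\cdot)$ appearing is trivial in the quotient. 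Applying this with $i=m$ and $j=m^{k-1}$ gives $[z_1^m,w^{m^{k-1}}]\equiv[z_1,w]^{m^k}=y^{m^k}$, whence $\delta_m(y)=h\,y^{m^k}$ for some $h\in G^{(k+1)}$, completing the induction. The only care needed is to track the weights so that each inner commutator genuinely lies in $G^{(k)}$, which is precisely what licenses the use of centrality.
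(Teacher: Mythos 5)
Your proof is correct and follows essentially the same route as the paper's: induction on $k$ with the one-parameter-subgroup property as base case, applying the automorphism $\delta_m$ to $y=[x,w]$, discarding the inductive error term via $[G,G^{(k)}]\sus G^{(k+1)}$, and invoking the bilinearity $[a^i,b^j]\equiv[a,b]^{ij}$ modulo $G^{(k+1)}$. Your reformulation modulo $G^{(k+1)}$ (with centrality of $G^{(k)}/G^{(k+1)}$ absorbing all conjugations) is just a cleaner packaging of the paper's explicit bookkeeping of the correction terms $h_1,[x^m,z],h_2$ via Lemma~\ref{l:split2} and Corollary~\ref{cor:split}.
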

\begin{proof}
	Induction on $ k $: if $ k=1 $, we have that $\delta_m(y) =  y^m $ since $t \mapsto \delta_t(y) $ is a one-parameter subgroup. Assume that the claim holds for $ k-1 $ and let $ y\in G^{(k)}$. Now $ y = [x,w] $, where $ x\in  V_1(G) $ and $ w \in G^{(k-1)}$ is a simple commutator of $ k-1 $ elements of $  V_1(G) $. Hence
	\[
	\delta_m(y) = [\delta_m(x),\delta_m(w)] = [x^m,z w^{m^{k-1}}],
	\]
	where $ z\in G^{(k)}$.
	By Lemma~\ref{l:split2} and Corollary~\ref{cor:split}, we get that
	\begin{align*}
	\delta_m(y) = h_1[x^m,z][x^m,w^{m^{k-1}}] =  h_1[x^m,z]h_2 [x,w]^{m m^{k-1}} =h[x,w]^{m^k},
	\end{align*}
	where $ h= h_1[x^m,z]h_2 \in  G^{(k+1)} $.
\end{proof}

\begin{lemma}
	\label{l:Gs:vectorspace}
	The abelian group $ G^{(s)} $ is a $ \Q $-vector space, and the scalar multiplication is given by the map $ \sigma_\frac{n}{m}(z) \coloneqq \delta_m^{-1}(z^{nm^{s-1}}) $ for $ s\geq 2 $. Moreover, if $ z=[x,w]\in G^{(s)} $ with $ x\in  V_1(G) $ and $ w\in G^{(s-1)} $, then $ \sigma_q(z) = [\delta_q(x),w] $.
\end{lemma}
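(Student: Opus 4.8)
The plan is to reduce the dilation action on $G^{(s)}$ to a clean power map and then read off both the vector-space axioms and the commutator formula from it; throughout I would use that $G^{(s)}$ is not only abelian but central, since $[G,G^{(s)}]\sus G^{(s+1)}=\{e\}$. The first step is to show that $\delta_m(z)=z^{m^s}$ for every $z\in G^{(s)}$ and $m\in\N$. By Lemma~\ref{l:simple:commutators} applied to the generating set $\{x_1,\dots,x_r\}$, the group $G^{(s)}$ is generated by simple commutators of weight $\ge s$ in the $x_i^{\pm1}$; those of weight $>s$ lie in $G^{(s+1)}=\{e\}$, so it suffices to treat simple commutators $y$ of weight exactly $s$. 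Since $x_i^{\pm1}\in V_1(G)$, Lemma~\ref{l:dilation} with $k=s$ gives $\delta_m(y)=hy^{m^s}$ with $h\in G^{(s+1)}=\{e\}$, i.e. $\delta_m(y)=y^{m^s}$; because $\delta_m$ is a homomorphism and $G^{(s)}$ is abelian this identity passes to arbitrary products, so $\delta_m(z)=z^{m^s}$ on all of $G^{(s)}$, and $\delta_m^{-1}=\delta_{1/m}$ acts as the unique $m^s$-th root.

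Next I would check that $\sigma_{n/m}(z):=\delta_m^{-1}(z^{nm^{s-1}})$ depends only on $q=n/m$. If $nm'=n'm$, I apply the injective map $\delta_{mm'}$ to both candidate values; using $\delta_{mm'}\circ\delta_{1/m}=\delta_{m'}$ together with Step 1 I get $\delta_{m'}(z^{nm^{s-1}})=z^{(nm')(mm')^{s-1}}$ and symmetrically $z^{(n'm)(mm')^{s-1}}$, which coincide. The four axioms of Definition~\ref{def:qstr} then follow by routine manipulation: writing two scalars over a common denominator $M$ reduces (ii) to additivity of exponents inside a single $\delta_M^{-1}$; (iv) is immediate since $\delta_m^{-1}$ is a homomorphism and $G^{(s)}$ is abelian; (iii) is the case $n=m=1$; and (i) reduces, via $\delta_b^{-1}\delta_m^{-1}=\delta_{mb}^{-1}$ and the homomorphism property, to the equality of exponents $na\,m^{s-1}b^{s-1}=an\,b^{s-1}m^{s-1}$. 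This establishes that $(G^{(s)},\sigma)$ is a $\Q$-vector space.

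For the last assertion, with $z=[x,w]$, $x\in V_1(G)$, $w\in G^{(s-1)}$ and $q=n/m$, both $\sigma_q(z)$ and $[\delta_q(x),w]$ lie in $G^{(s)}$, so by injectivity of $\delta_m$ it suffices to show they agree after applying $\delta_m$. On one side $\delta_m(\sigma_q(z))=z^{nm^{s-1}}$. On the other, since $\delta_m$ is a homomorphism and $\delta_m\circ\delta_q=\delta_n$,
\[
\delta_m([\delta_q(x),w])=[\delta_n(x),\delta_m(w)]=[x^{n},\delta_m(w)].
\]
Here I would invoke two facts. First, the intermediate claim $\delta_m(w)\equiv w^{m^{s-1}}\pmod{G^{(s)}}$ for all $w\in G^{(s-1)}$: it holds for simple commutators of weight $s-1$ by Lemma~\ref{l:dilation} with $k=s-1$, and extends to all of $G^{(s-1)}$ because both $w\mapsto\delta_m(w)$ and $w\mapsto w^{m^{s-1}}$ induce homomorphisms of the abelian quotient $G^{(s-1)}/G^{(s)}$ that agree on the generating simple commutators. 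Second, centrality of $G^{(s)}$ makes $(a,b)\mapsto[a,b]$ biadditive on $G\times G^{(s-1)}$, so $[x^{n},\cdot]$ kills $G^{(s)}$ and $[x^{a},w^{b}]=[x,w]^{ab}$ (the step-$s$ form of Corollary~\ref{cor:split}). Combining, $[x^{n},\delta_m(w)]=[x^{n},w^{m^{s-1}}]=[x,w]^{nm^{s-1}}=z^{nm^{s-1}}$, matching $\delta_m(\sigma_q(z))$.

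The only non-formal point, and hence the expected obstacle, is passing from simple commutators to arbitrary elements: first in Step 1, and more delicately in proving $\delta_m(w)\equiv w^{m^{s-1}}\pmod{G^{(s)}}$ in Step 3. The guiding idea is to descend to the relevant abelian quotient ($G^{(s)}$ itself, or $G^{(s-1)}/G^{(s)}$, which is abelian precisely because $s\ge 2$), where the two maps being compared are genuine homomorphisms and need only be matched on the generating simple commutators supplied by Lemma~\ref{l:simple:commutators}; centrality of $G^{(s)}$ is exactly what makes these quotient maps well defined and the commutator biadditive, so everything else is bookkeeping with exponents.
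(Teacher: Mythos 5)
Your proof is correct, but it inverts the paper's architecture. The paper works commutator-first: it \emph{defines} $\sigma_q([x,w]) := [\delta_q(x),w]$ on simple commutators, extends multiplicatively to products, and then proves the closed formula $\sigma_{n/m}(z) = \delta_m^{-1}(z^{nm^{s-1}})$ precisely in order to see that $\sigma$ is well defined (independent of the chosen decomposition into simple commutators); the axioms of Definition~\ref{def:qstr} are checked through the commutator representation (axiom (ii) via the one-parameter property and Corollary~\ref{splitlemma}, axiom (iv) by construction), negative rationals are handled separately via Lemma~\ref{inverse}, and the ``moreover'' clause for arbitrary $w\in G^{(s-1)}$ is obtained by explicitly decomposing $w=v_1\cdots v_l$ into simple commutators. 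You instead take the closed formula as the definition, after isolating the clean intermediate fact $\delta_m\restriction_{G^{(s)}}=(\cdot)^{m^s}$, which the paper never states explicitly though it is implicit in its computation $\delta_m(\sigma_q(z))=z^{nm^{s-1}}$; this makes well-definedness and the axioms pure exponent arithmetic, and turns the commutator identity into a theorem, proved by applying the injective $\delta_m$ and using the congruence $\delta_m(w)\equiv w^{m^{s-1}} \pmod{G^{(s)}}$, which you correctly extend from simple commutators to all of $G^{(s-1)}$ by passing to the abelian quotient $G^{(s-1)}/G^{(s)}$ --- a step the paper's route avoids by its explicit decomposition of $w$. Your route buys a more transparent well-definedness and a uniform treatment of negative scalars (biadditivity of the commutator with central values replaces the appeal to Lemma~\ref{inverse}); the paper's route gets the ``moreover'' identity for free on simple commutators, which is the form later extended to real scalars in Lemma~\ref{hsvsp}. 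Both arguments rest on the same ingredients --- Lemma~\ref{l:dilation}, Lemma~\ref{l:simple:commutators} (with the observation that the rational first-layer generators are closed under inversion, so weight-exactly-$s$ simple commutators generate $G^{(s)}$), and centrality of $G^{(s)}$ --- and the two reduction steps you flag as delicate are handled soundly; the only (harmless) omission is the degenerate case $s=1$, which the paper dispatches in one line by taking $\sigma=\delta$, while your formula clause, like the lemma's, only concerns $s\geq 2$.
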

\begin{proof}
	If the step $ s=1 $, the group $G^{(s)} = G $ and the $ \Q $-vector space structure is given by the dilation automorphisms $ \delta \colon \Q \times G \to G $, as the maps $ t\mapsto \delta_t(x_i) $ are one-parameter subgroups.
	
	For step $s \geq 2 $, let first $ z\in G^{(s)} $ be a simple commutator of $ s $ elements of $  V_1(G) $. In particular, $ z=[x,w] $, where $ x\in  V_1(G) $ and $ w\in G^{(s-1)} $ is a simple commutator of $ s-1 $ elements of $  V_1(G) $. Define $ \sigma\colon \Q \times G^{(s)} \to G^{(s)} $ for simple commutators by
	\[
	\sigma_q([x,w]) = [\delta_q(x),w].
	\]
	If $ z $ is a product of simple commutators $ z_1,\ldots,z_k \in G^{(s)}$, we define
	\[
	\sigma_q(z_1\cdots z_k) = \sigma_q(z_1)\cdots \sigma_q(z_k).
	\]
	By Lemma~\ref{l:simple:commutators}, this is enough to define the map $ \sigma $ for all $ z\in G^{(s)} $.
	
	We show next that
	\[
	\sigma_\frac{n}{m}(z) = \delta_m^{-1}(z^{nm^{s-1}}),
	\]
	which proves that the map is well defined.
	Let first $ z=[x,w] $, where $ x\in  V_1(G) $ and $ w\in G^{(s-1)} $ is a simple commutator of $ s-1 $ elements of $  V_1(G) $ and $ q = \frac{n}{m} \in \Q_+$, $ n,m\in \N $. Lemma~\ref{l:dilation} gives us that
	\begin{align*}
	\delta_m(\sigma(q,[x,w])) = [\delta_m(\delta_{n/m}(x)),\delta_m(w)] =  [x^n,hw^{m^{s-1}}] = [x^n,w^{m^{s-1}}],
	\end{align*}
	where $ h\in G^{(s)} \sus Z(G) $. Since $ [x,w]\in Z(G) $ as well, we get by iterating Corollary~\ref{splitlemma} that
	\[
	\delta_m(\sigma_q([x,w])) =[x,w]^{nm^{s-1}} = z^{nm^{s-1}}.
	\]
	If $ q\in \Q_- $, we replace $ x $ by $ \inv{x} $ in the above calculation as $ \delta_{-q}(x)=\delta_q(\inv{x}) $ and use Lemma~\ref{inverse}, which gives
	\[
	[\inv{x},w] = \inv{[x,w]},
	\]
	since now $ [\inv{x},[w,x]] = e_G $.

	If $ z\in G^{(s)} $ is a product of simple commutators $ z_1,\ldots, z_k \in G^{(s)}$, we have
	\begin{align*}
	\delta_m(\sigma_q(z_1\cdots z_k)) &= \delta_m(\sigma_q(z_1))\cdots \delta_m(\sigma_q(z_k)) \\
	&= z_1^{nm^{s-1}}\cdots z_k^{nm^{s-1}} \\
	&= (z_1 \cdots z_k)^{nm^{s-1}} \\
	&= z^{nm^{s-1}}
	\end{align*}
	since $ z_i\in Z(G) $ for all $ i $.
	
	Let finally $ z = [x,w] $ be such that $ x\in  V_1(G) $ and $ w$ is an arbitrary element of $ G^{(s-1)}  $. Then by Lemma~\ref{l:simple:commutators} there exist simple commutators $ v_1,\ldots, v_l $ of length $ s-1 $ such that $ w = v_1\cdots v_l $. By Corollary~\ref{splitlemma} we get
	\begin{align*}
	\sigma_q([x,v_1\cdots v_l]) &= \sigma_q([x,v_1]\cdots[x,v_l])= \sigma_q([x,v_1])\cdots \sigma_q([x,v_l])\\
	&= [\delta_q(x),v_1]\cdots [\delta_q(x),v_l] = [\delta_q(x),v_1\cdots v_l]
	\end{align*}
	
	It remains to check that the map $ \sigma \colon \Q\times G^{(s)} \to G^{(s)} $ satisfies the conditions (i)--(iv) in the Definition~\ref{def:qstr}. Part (iv) is true by construction. The conditions (i) and (iii) follow from the fact that $ \delta \colon (\Q^*,\cdot) \to \text{Aut}(G) $ is a group homomorphism:
	\[
	\delta_{q p} = \delta_q \circ \delta_p \ja \delta_1 = id,
	\]
	so
	\[
	\sigma_q(\sigma_p([x,w])) = [\delta_q\circ\delta_p(x),w]=[\delta_{qp}(x),w] = \sigma_{qp}([x,w])
	\]
	and
	\[
	\sigma_1([x,w]) = [\delta_1(x),w]=[x,w].
	\]
	Part (ii) holds by Corollary~\ref{splitlemma} and since $ t\mapsto \delta_t(x) $ is a one-parameter subgroup	for all $ x\in  V_1(G) $, namely
	\[
	\sigma_{q+p}([x,w]) = [\delta_{q+p}(x),w] = [\delta_q(x)\delta_p(x),w] =[\delta_q(x),w][\delta_p(x),w]= \sigma_q([x,w])\sigma_p([x,w]).
	\] 
	Hence the map $ \sigma $ defines a $ \Q $-vector space structure on $ G^{(s)} $.
\end{proof}
\begin{lemma}
	\label{findim}
	The group $ G^{(s)} $ equipped with the $ \Q $-vector space structure of Lemma~\ref{l:Gs:vectorspace} is finite dimensional.
\end{lemma}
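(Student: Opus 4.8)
The plan is to exhibit a finite $\Q$-spanning set for $G^{(s)}$, and for this it suffices to show that $G^{(s)}$ is finitely generated as a group. The key observation is that the $\Z$-module structure underlying the abelian group $G^{(s)}$ is nothing but the restriction to $\Z$ of the $\Q$-scalar multiplication of Lemma~\ref{l:Gs:vectorspace}: taking $m=1$ in the formula $\sigma_{n/m}(z) = \delta_m^{-1}(z^{nm^{s-1}})$ gives $\sigma_n(z) = z^n$ for every $n\in\Z$ (and in the degenerate case $s=1$ the same identity holds, because $t\mapsto\delta_t(x)$ is a one-parameter subgroup). Consequently, any subset that generates $G^{(s)}$ as a group already spans it over $\Q$, since every $\Z$-linear combination is in particular a $\Q$-linear combination.

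To produce a finite generating set, I would apply Lemma~\ref{l:simple:commutators} with $M=\{x_1,\ldots,x_r\}$: since $M$ generates $G$, the group $G^{(s)}$ is generated by the simple commutators of weight $\geq s$ in the letters $x_i^{\pm 1}$. Because $G$ is nilpotent of step $s$, every commutator of weight $>s$ lies in $G^{(s+1)}=\{e\}$ and is therefore trivial. Hence $G^{(s)}$ is generated, as a group, by the set $S$ of simple commutators of weight exactly $s$ in the symbols $x_1^{\pm 1},\ldots,x_r^{\pm 1}$, and this set is finite: a simple commutator of weight $s$ is determined by an ordered $s$-tuple of letters drawn from the $2r$ symbols $x_i^{\pm 1}$, so $|S|\leq (2r)^s$.

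Combining the two observations, $S$ is a finite $\Q$-spanning subset of $G^{(s)}$, whence $\dim_\Q G^{(s)} \leq (2r)^s < \infty$, which is the claim. I do not expect a genuine obstacle here; the only point demanding care is the bookkeeping in the first step, namely confirming that the abelian-group structure of $G^{(s)}$ and the integer scalings of its $\Q$-vector space structure literally coincide, so that group generation upgrades to $\Q$-linear spanning. One might also worry that the inverses $x_i^{-1}$ among the commutator letters enlarge the generating set, but this affects only the cardinality bound (through the factor $2r$ in place of $r$) and not the finiteness; indeed, using $[\inv{x},w] = \inv{[x,w]} = \sigma_{-1}([x,w])$ one sees that the letters $x_i^{-1}$ contribute nothing new to the $\Q$-span.
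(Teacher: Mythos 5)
Your first observation is sound: taking $m=1$ in $\sigma_{n/m}(z)=\delta_m^{-1}(z^{nm^{s-1}})$ gives $\sigma_n(z)=z^n$, so any subset that generates $G^{(s)}$ \emph{as a group} automatically $\Q$-spans it. The gap is in the second step, where you apply Lemma~\ref{l:simple:commutators} with $M=\{x_1,\ldots,x_r\}$ ``since $M$ generates $G$''. In this paper, the hypothesis that $G$ is generated by $x_1,\ldots,x_r\in V_1(G)$ means generated as a $\Q$-scalable group: $G$ is the group generated by the infinite set $V_\Q=\{\delta_q(x_i): q\in\Q,\ 1\le i\le r\}$ (see the beginning of Section~\ref{ss:proof:isCarnot}), not by the finitely many elements $x_i$ themselves. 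Already for $s=1$ this matters: $G=\Q$ with $\delta_t(x)=tx$ is generated by $x_1=1$ in the scalable sense, while the group generated by $\{1^{\pm1}\}$ is only $\Z$. Applying Lemma~\ref{l:simple:commutators} legitimately, with $M=V_\Q$, one gets that $G^{(s)}$ is generated by weight-$s$ simple commutators $[\delta_{t_1}(x_{i_1}),[\delta_{t_2}(x_{i_2}),\ldots]]$ with arbitrary rational dilations in \emph{every} slot --- an infinite set --- and your finite set $S$ of commutators in the bare letters $x_i^{\pm1}$ need not generate $G^{(s)}$ as a group, so the upgrade to $\Q$-spanning never gets off the ground.

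To rescue the argument you would need to extract the inner dilations as $\sigma$-scalars, i.e., a $\Q$-multilinearity statement for the bracket into the last layer. The second part of Lemma~\ref{l:Gs:vectorspace} provides this only for the outermost slot, $\sigma_q([x,w])=[\delta_q(x),w]$; nothing proved up to this point lets you pull $\delta_{t_j}$ out of an inner slot, because $G^{(s-1)}$ is not central and that lemma does not apply to it. This missing multilinearity is precisely the content of the paper's proof, which is structured as an induction on the step $s$: it passes to the quotient $K=G/G^{(s)}$, a step-$(s-1)$ $\Q$-scalable group generated by $x_1G^{(s)},\ldots,x_rG^{(s)}$, obtains by the induction hypothesis a finite basis $k_1,\ldots,k_l$ of $K^{(s-1)}$, lifts it to elements $u_1,\ldots,u_l\in G^{(s-1)}$, and then uses the commutator identities of Appendix~\ref{ss:commidentities} together with the explicit formula $\sigma_{n/m}(z)=\delta_m^{-1}(z^{nm^{s-1}})$ to show that every $[x,u]$, with $x$ a rational dilate of some $x_i$ and $u\in G^{(s-1)}$, lies in the $\Q$-span of the finite set $\{[x_i,u_j]\}$. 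So the inner-slot extraction is the heart of the lemma rather than bookkeeping, and your proposal assumes it away by misreading the generation hypothesis.
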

\begin{proof}
	The proof is by induction on step. If step $ s=1 $, we have that $ G = V_1(G) $ is commutative and the set $ \{x_1,\ldots,x_r\} $ is a basis for $ V_1(G)$. Suppose that the claim holds for any $ \Q $-scalable group of step $ s-1 $.  Let $ K \coloneqq G/ G^{(s)} $ and define \[
	\hat{\delta} \colon \Q \times K \to K , \quad \hat{\delta_q}(g G^{(s)})  \coloneqq \delta_q(g) G^{(s)}.
	\] This map is well defined, since $ \delta(G^{(s)}) = G^{(s)}$. Hence the group $ K $ is a $ \Q $-scalable group of step $ s-1 $ and it is generated by $\{ x_1G^{(s)}, \ldots, x_rG^{(s)} \} $. Notice that
	\[
	[x G^{(s)},yG^{(s)}]_K = [x,y]_G G^{(s)}.
	\] Let $ \hat{\sigma} \colon \Q \times K \to K $ be the map from Lemma~\ref{l:Gs:vectorspace}, which makes $ K^{(s-1)}  $ a $ \Q $-vector space. By induction hypothesis, there exists a basis $ \{k_1,\ldots,k_l\} $ of $ K^{(s-1)} $. Let $ \pi \colon G \to K $ be the projection and choose $ u_i \in \inv{\pi}(k_i) \sus G^{(s-1)}$ for all $ 1\leq i \leq l $. We show that the set $ \{[x_i,u_j] \,:\, 1\leq i \leq r, \, 1\leq j \leq l \} $ spans $ G^{(s)} $. Since $ G^{(s)} $ commutes, it is enough to show that $ \{[x_i,u_j]\} $ spans all the elements of the form $ [x,u] $, where $ x\in  V_1(G) $ and $ u\in G^{(s-1)} $.
	
	Fix $ z = [x,u]\in G^{(s)} $ such that $ x\in  V_1(G) $ and $ u\in G^{(s-1)} $. There exist $ q_1,\ldots,q_l \in \Q $, $ q_i = \frac{n_i}{m_i} $, such that
	\begin{align*}
	\pi(u) &= \hat{\sigma}_{q_1}(k_1)\cdots \hat{\sigma}_{q_l}(k_l) \\
	&= \hat{\delta}^{-1}_{m_1}((u_1G^{(s)})^{n_1m_1^{s-2}}) \cdots \hat{\delta}^{-1}_{m_l}((u_lG^{(s)})^{n_lm_l^{s-2}}) \\
	&= {\delta}^{-1}_{m_1}(u_1^{n_1m_1^{s-2}}) \cdots {\delta}^{-1}_{m_l}(u_l^{n_lm_l^{s-2}}) G^{(s)}\\
	&\eqqcolon v G^{(s)}.	
	\end{align*}
	Hence there exists an element $ h\in G^{(s)}\sus Z(G) $ such that $ u = vh $. Therefore
	\begin{align*}
	[x,u] &= [x,vh] = [x,v] \\
	&=[x,{\delta}^{-1}_{m_1}(u_1^{n_1m_1^{s-2}}) \cdots {\delta}^{-1}_{m_l}(u_l^{n_lm_l^{s-2}})] \\
	&=[x,{\delta}^{-1}_{m_1}(u_1^{n_1m_1^{s-2}})] \cdots [x,{\delta}^{-1}_{m_l}(u_l^{n_lm_l^{s-2}})] \\
	&={\delta}^{-1}_{m_1}([x^{m_1},u_1^{n_1m_1^{s-2}}]) \cdots {\delta}^{-1}_{m_l}([x^{m_l},u_l^{n_lm_l^{s-2}}]) \\
	&={\delta}^{-1}_{m_1}([x,u_1]^{n_1m_1^{s-1}}) \cdots {\delta}^{-1}_{m_l}([x,u_l]^{n_lm_l^{s-1}}) \\
	&=\sigma_{q_1}([x,u_1])\cdots \sigma_{q_l}([x,u_l]),
	\end{align*}
	where we used Corollaries~\ref{splitlemma} and~\ref{cor:split}.
	Since $ x\in  V_1(G) $, there exists a $ q\in \Q $ and $ i\in \{1,\ldots,r\} $ such that $ x = \delta_q(x_i) $. Thus, by the second part of Lemma~\ref{l:Gs:vectorspace},
	\begin{align*}
	[x,u] &= \sigma_{q_1}([\delta_q(x_i),u_1])\cdots \sigma_{q_l}([\delta_q(x_i),u_l]) \\
	&=  [\delta_{q_1q}(x_i),u_1] \cdots [\delta_{q_lq}(x_i),u_l] \\
	&= \sigma_{q_1q}([x_i,u_1])\cdots \sigma_{q_lq}([x_i,u_l]).
	\end{align*}
\end{proof}
\subsection{Proof of Proposition~\ref{p:isCarnotSec2}}
\label{ss:proof:isCarnot}
Our first task is to prove that $ G $ is locally compact. To show this, we consider the $ \Q $-scalable subgroup $ \hq $ of $ G $ that by definition is generated as a group by $ \{\delta_t(x_i) : t\in \Q, 1\leq i \leq r\}\eqqcolon V_\Q $. Let  $ \sigma \colon \Q \times \hqs \to \hqs  $ be the continuous map from Lemma~\ref{findim} which makes $ \hqs $ a $ k $-dimensional $ \Q $-vector space for some $ k\in \N $. We use the following facts about topological groups to show that $ G^{(s)} $ is a finite-dimensional real topological vector space.
\begin{thm}[Theorem 1.22 in \cite{Rudin}]
	\label{wiki}
	A Hausdorff topological vector space is locally compact if and only if it is finite dimensional.
\end{thm}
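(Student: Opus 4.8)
The plan is to prove the two implications separately; the direction ``finite dimensional $\Rightarrow$ locally compact'' is routine, while ``locally compact $\Rightarrow$ finite dimensional'' carries all the content. For the easy direction, suppose $X$ has dimension $n$ over its scalar field $\K$. Then $X$ is linearly homeomorphic to $\K^n$ with the product topology, by the uniqueness of the Hausdorff vector topology in finite dimensions (Theorem~1.21 in \cite{Rudin}); since $\K^n$ is locally compact, so is $X$.

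For the converse, I would assume $X$ is locally compact and fix a neighborhood $V$ of $0$ with compact closure $\ol V$. I would cover $\ol V$ by the translates $x + \tfrac12 V$, $x \in \ol V$, extract a finite subcover, and set $Y := \Span(x_1,\ldots,x_n)$ for the finitely many centers $x_1,\ldots,x_n$ so obtained. This gives
\[
\ol V \subseteq \bigcup_{i=1}^n \left( x_i + \tfrac12 V \right) \subseteq Y + \tfrac12 V .
\]
Using that $Y$ is a linear subspace (hence $Y + Y = Y$), I would then prove by induction the inclusions $\ol V \subseteq Y + 2^{-k} V$ for all $k \in \N$: starting from $\ol V \subseteq Y + \tfrac12 V \subseteq Y + \tfrac12 \ol V$ and substituting the inductive bound into the term $\tfrac12 \ol V$ halves the error set at each step.

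The heart of the argument is to pass from these inclusions to $\ol V \subseteq Y$. Because $\ol V$ is compact it is bounded, so the sets $2^{-k} V$ eventually lie inside any prescribed neighborhood of $0$; writing a given $x \in \ol V$ as $x = y_k + v_k$ with $y_k \in Y$ and $v_k \in 2^{-k} V$, this forces $v_k \to 0$ and hence $y_k \to x$. Since finite-dimensional subspaces of a Hausdorff topological vector space are closed (again Theorem~1.21 in \cite{Rudin}), the limit $x$ lies in $Y$, so $\ol V \subseteq Y$. Finally, $V \subseteq Y$ is a neighborhood of $0$ and is therefore absorbing, so every $x \in X$ lies in $tV \subseteq tY = Y$ for suitable $t > 0$; thus $X = Y$ is finite dimensional.

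The main obstacle is precisely this last extraction, i.e.\ deducing $\ol V \subseteq Y$ from $\ol V \subseteq \bigcap_k (Y + 2^{-k} V)$. This is where the two structural facts enter essentially: boundedness of the compact set $\ol V$ drives the error terms $v_k$ to zero, and closedness of the finite-dimensional subspace $Y$ returns the limit to $Y$. Everything else is bookkeeping with the linear operations together with the finite-dimensional uniqueness theorem.
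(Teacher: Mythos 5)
Your proof is correct and is essentially the standard argument of Theorem~1.22 in Rudin, which the paper cites without proof: cover the compact closure $\ol V$ by translates $x_i+\tfrac12 V$, iterate $V\subseteq Y+2^{-k}V$ using $Y+Y=Y$, and conclude via boundedness of the compact set $\ol V$ and closedness of the finite-dimensional subspace $Y$ (both from Rudin's Theorems~1.15 and~1.21) that $X=Y$. Since the paper defers entirely to this reference, there is nothing to compare beyond noting that your write-up matches the cited proof.
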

\begin{lemma}
	\label{closedgp}
	Every locally compact subgroup of a topological group is closed.
\end{lemma}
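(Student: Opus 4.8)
The plan is to exploit the local compactness of the subgroup $H$ to trap a neighborhood of the identity inside a set whose $G$-closure cannot escape $H$, and then to propagate this control to all of $\overline{H}$ by translation. Throughout I use that the ambient group is Hausdorff (as holds for the metric groups of interest here), so that compact subsets of $G$ are closed in $G$.

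First I would extract the basic compactness datum. Since $H$ is locally compact in its subspace topology, the identity $e$ has a compact neighborhood in $H$; that is, there are an open set $W \subseteq G$ and a set $K$ that is compact in $H$ (hence compact in $G$) with $e \in W \cap H \subseteq K \subseteq H$. Because $K$ is compact in the Hausdorff group $G$, it is closed in $G$, so taking closures in $G$ yields $\overline{W \cap H} \subseteq \overline{K} = K \subseteq H$. This inclusion, namely that the $G$-closure of $W \cap H$ remains inside $H$, is the technical heart of the argument; I will refer to it as $(\star)$.

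Next I would choose, using continuity of multiplication and inversion at $e$, a symmetric open neighborhood $V$ of $e$ with $VV \subseteq W$, and recall that the closure $\overline{H}$ is again a subgroup of $G$. The key local claim is that $V \cap \overline{H} \subseteq H$. To see this, fix $y \in V \cap \overline{H}$ and let $N$ be an arbitrary neighborhood of $y$, which after shrinking I may assume satisfies $N \subseteq yV$. Since $y \in \overline{H}$, there is $h \in H \cap N$, and writing $h = yv$ with $v \in V$ gives $h \in VV \subseteq W$, so $h \in (W \cap H) \cap N$. As $N$ was arbitrary, $y \in \overline{W \cap H}$, whence $y \in H$ by $(\star)$.

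Finally I would globalize. Let $x \in \overline{H}$ be arbitrary. The open set $xV$ is a neighborhood of $x$, so it meets $H$: there is $h_0 \in H$ with $h_0 = x v_0$ for some $v_0 \in V$. Then $h_0^{-1} x = v_0^{-1} \in V$, where symmetry of $V$ is used, and $h_0^{-1} x \in \overline{H}$ because $\overline{H}$ is a subgroup containing $h_0$. Hence $h_0^{-1} x \in V \cap \overline{H} \subseteq H$ by the local claim, and therefore $x = h_0 (h_0^{-1} x) \in H$. This proves $\overline{H} \subseteq H$, i.e. $H$ is closed. The main obstacle is setting up $(\star)$ correctly: one must verify that the compact neighborhood is genuinely sandwiched as $W \cap H \subseteq K$ with $K$ closed in $G$; once this is in place, the neighborhood bookkeeping with $V$ is routine.
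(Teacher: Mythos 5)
Your argument is correct, and every step checks out: the sandwich $e \in W \cap H \subseteq K \subseteq H$ with $K$ compact, hence closed in the Hausdorff group $G$, does give $(\star)$, i.e. $\overline{W \cap H} \subseteq H$; the bookkeeping with a symmetric $V$ satisfying $VV \subseteq W$ correctly yields $V \cap \overline{H} \subseteq H$; and the final translation step is valid because $\overline{H}$ is a subgroup. Your route differs from the paper's in its decomposition, though the engine is the same compactness-trapping device. The paper observes that $H$ is dense in the subgroup $\overline{H}$, proves the purely topological claim that a locally compact dense subset of a Hausdorff space is open (by exactly the maneuver underlying your $(\star)$: the compact neighborhood is closed in the ambient space, so an ambient open set gets trapped inside it by density), and then invokes the standard fact that open subgroups of topological groups are closed, applied inside $\overline{H}$. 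You instead inline both facts: your local claim $V \cap \overline{H} \subseteq H$ says $H$ is a neighborhood of the identity in $\overline{H}$ (the openness statement, verified only at $e$), and your globalization via $h_0^{-1}x \in V \cap \overline{H}$ is precisely the proof that open subgroups are closed, unrolled. The paper's modular version buys two reusable general statements and defers the group structure to the last line; yours buys self-containedness, the economy of checking openness only at the identity with homogeneity doing the rest, and an explicit flagging of the Hausdorff hypothesis --- which the paper's statement of the lemma omits but its proof also needs (the lemma fails for non-Hausdorff topological groups, e.g. with the indiscrete topology every subgroup is locally compact but need not be closed; in this paper the groups carry admissible metrics, so the hypothesis is harmless).
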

\begin{proof}
	This proof is adapted from a Mathematics Stack Exchange post by Eric Wofsey \cite{EricWofsey}. Let $ H $ be a topological group and let $ K $ be a locally compact subgroup of $ H $. Then $ \overline{K} $ is also a subgroup of $ H $, and $ K $ is dense in $ \overline{K} $. We claim that every locally compact dense subset of a Hausdorff space is open. Indeed, let $ S $ be a locally compact dense subset of a Hausdorff space $ X $ and take $ x \in S $. Let also $ U $ be open in $ S $ such that $x \in  U $, $ \overline{U}\sus S $, and $ \overline{U} $ is compact. Take then an open set $ V \sus X $ such that $ V \cap S = U $. Since $ X $ is Hausdorff, $ \overline{U} $ is closed in $ X $ and therefore $ V \setminus \overline U $ is open in $ X $. But
	\[
	( V \setminus \overline U)\cap S = (V \cap S)\setminus \overline{U} = U \setminus \overline{U} = \emptyset,
	\]
	and hence $ V \setminus \overline U = \emptyset $ as $ S $ is dense in $ X $. We conclude that $ V \sus S $, which proves the claim.
	
	Hence, by the previous claim $ K $ is open in $ \overline{K} $. Recall that every open subgroup of a topological group is closed since the complement $ K^c $ of an open subgroup $ K $ is the union of open sets; $ K^c = \cup_{x \in K^c}xK $. Hence $ K $ is closed in $ \overline{K} $ and therefore also in $ H $.
\end{proof}
%\begin{fact}
%	In a topological group, the closure of a nilpotent (abelian) subgroup is a nilpotent (abelian) subgroup.
%\end{fact}
\begin{lemma}
	\label{hsvsp}
	$G^{(s)} $ equals to $\overline{\hqs} $ and it is a $ k $-dimensional real topological vector space.
\end{lemma}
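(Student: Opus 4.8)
The plan is to equip $G^{(s)}$ with a real vector space structure extending the $\Q$-structure of Lemma~\ref{l:Gs:vectorspace}, to identify it with the real span of a $\Q$-basis of $\hqs$, and then to conclude via Theorem~\ref{wiki} and Lemma~\ref{closedgp}. First I would record the inclusions $\hqs \sus G^{(s)} \sus \overline{\hqs}$. The first is monotonicity of the lower central series under the subgroup $\hq < G$. For the second, recall that $G$ is the closure of $\langle \delta_t(x_i) : t\in\R\rangle$; by Lemma~\ref{l:simple:commutators} the group $G^{(s)}$ is generated by simple commutators of weight $\geq s$ in these generators, and since $t\mapsto\delta_t(x_i)$ and the group commutator are continuous, each such commutator with real parameters is a limit of the corresponding commutators with rational parameters, which lie in $\hqs$. (The same continuity argument shows $\hq$ has step exactly $s$, so $\hqs\neq\{e\}$.) Consequently it suffices to prove that $\overline{\hqs}$ is a $k$-dimensional real topological vector space and is closed in $G$; the displayed inclusions then force $G^{(s)}=\overline{\hqs}$.

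Next I would build the real scalar multiplication intrinsically from the ambient dilation. On $G^{(s)}$ set $\rho_t := \delta_{t^{1/s}}$ for $t>0$, $\rho_0:=e_G$, and extend to negative $t$ via the group inverse (legitimate since $G^{(s)}$ is abelian). Each $\rho_t$ preserves $G^{(s)}$ and is continuous, and $(t,z)\mapsto\rho_t(z)$ is jointly continuous because $\delta$ is. The vector space axioms follow from $\delta_\lambda\circ\delta_\mu=\delta_{\lambda\mu}$ together with the one-parameter-subgroup splitting of $t\mapsto\delta_t(x)$ and the centrality of $G^{(s)}$, exactly as in Lemma~\ref{l:Gs:vectorspace}; moreover $\rho_q=\sigma_q$ for $q\in\Q$ (one checks $\rho_j(z)=z^j$ for $j\in\Z$ and compares with the formula $\sigma_{n/m}(z)=\delta_m^{-1}(z^{nm^{s-1}})$). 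Thus $G^{(s)}$ is a Hausdorff real topological vector space in which $\hqs$ is a dense $\Q$-subspace.

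Finally, choosing a $\Q$-basis $b_1,\dots,b_k$ of $\hqs$ from Lemma~\ref{findim}, its real span $W=\sum_i\R b_i\sus G^{(s)}$ is a finite-dimensional subspace. Granting that $\dim_\R W=k$ (see below), $W$ is a $k$-dimensional Hausdorff real topological vector space, hence locally compact by Theorem~\ref{wiki} and therefore closed in $G$ by Lemma~\ref{closedgp}. Since $\hqs\sus W$ we get $\overline{\hqs}\sus W$, while $W\sus\overline{\hqs}$ because each $\rho_t(b_i)=\lim_{q_n\to t}\sigma_{q_n}(b_i)\in\overline{\hqs}$; hence $\overline{\hqs}=W$. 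As $W\sus G^{(s)}\sus\overline{\hqs}=W$, all three coincide and $G^{(s)}=\overline{\hqs}=W$ is a $k$-dimensional real topological vector space.

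The main obstacle is exactly the equality $\dim_\R W=k$. A finite-dimensional $\Q$-subspace can be dense in a real space of strictly smaller dimension---$\Q+\Q\sqrt2$ is dense in $\R$---so this does not follow from soft TVS arguments; one must show that the $\Q$-basis of $\hqs$ stays $\R$-linearly independent in $G^{(s)}$, equivalently that the natural map $\hqs\otimes_\Q\R\to G^{(s)}$ is injective. I expect to obtain this by repeating the inductive construction of Lemma~\ref{findim} over $\R$ and verifying that every $\R$-linear relation among the generating commutators $[x_i,u_j]$ is already defined over $\Q$---that is, that the commutator structure is rigid with respect to the rational dilation. Once this rigidity is in place, the two cited facts about topological groups finish the proof routinely.
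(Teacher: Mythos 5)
Your construction is sound up to the point you yourself flag, and it is in places cleaner than the paper's. The paper chooses a $\Q$-basis $v_i=[x_i,w_i]$ of $\hqs$, sets $W=\{[\delta_{t_1}(x_1),w_1]\cdots[\delta_{t_k}(x_k),w_k] : t_i\in\R\}$, and defines the real structure $\tilde{\sigma}_\lambda$ by rescaling the parameters $t_i$; your intrinsic $\rho_t=\delta_{t^{1/s}}$ agrees with this (since by Lemma~\ref{l:dilation} and continuity $\delta_\mu$ acts on the last layer as $\sigma_{\mu^s}$), inherits joint continuity directly from $\delta$, and sidesteps the well-definedness issue that the parameter-dependent definition of $\tilde{\sigma}$ strictly speaking has to address. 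Your inclusions $\hqs\sus G^{(s)}\sus\overline{\hqs}$ and the endgame via Theorem~\ref{wiki} and Lemma~\ref{closedgp} match the paper's proof.

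The genuine gap is the step you defer, and it cannot be filled as you propose: the ``rigidity'' statement that every $\R$-linear relation among the generating commutators is already defined over $\Q$ is false. Take $G=\mathbb{H}^1$ the first Heisenberg group with $\exp$-coordinates and Lie algebra basis $X,Y,Z$, $[X,Y]=Z$, and generators $x_1=\exp(X)$, $x_2=\exp(Y)$, $x_3=\exp(\sqrt{2}\,Y)$, all in $V_1(G)$ (no independence of the generators is assumed in Proposition~\ref{p:isCarnotSec2} or in this section). Then $\hqs$ contains $[x_1,x_2]=\exp(Z)$ and $[x_1,x_3]=\exp(\sqrt{2}\,Z)$, which are $\Q$-independent, so $k=2$ by Lemma~\ref{findim}; yet $G^{(2)}=\exp(\R Z)$ is one-dimensional, and the relation $\sqrt{2}\cdot[x_1,x_2]=[x_1,x_3]$ is real but not rational. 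So the map $\hqs\otimes_\Q\R\to G^{(s)}$ whose injectivity you correctly identify as the crux genuinely fails, and the ``$k$-dimensional'' clause of the lemma is inaccurate at this level of generality (the paper's own proof merely asserts it via the parametrization of $W$, without addressing injectivity of $\R^k\to W$). The correct move is to delete the obstacle rather than surmount it: since $b_1,\dots,b_k$ span $W$ over $\R$, you get $\dim_\R W\le k$ for free, and finite-dimensionality of the Hausdorff TVS $W$ is all that Theorem~\ref{wiki} and Lemma~\ref{closedgp} require to give local compactness, closedness, and hence $G^{(s)}=\overline{\hqs}=W$. Everything downstream (Lemma~\ref{l:loccpt}, hence Proposition~\ref{p:isCarnotSec2}) uses only local compactness of $G^{(s)}$, so with the statement weakened to ``finite-dimensional (of dimension at most $k$)'' your argument closes completely.
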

\begin{proof}
	Let $ \{v_1,\ldots,v_k \} $ be a basis for $ \hqs $. We claim that since $ \hqs \sus Z(G)$, we may assume that each $ v_i $ is of the form $ [x_i,w_i] $ with $ x_i\in V_\Q $ and $ w_i\in \hq^{(s-1)} $. Indeed, recall that by Lemma~\ref{l:simple:commutators} any element of $ G_\Q^{(s)} $ is a product of simple commutators of elements of $ V_\Q $ of weight $ s $, which proves the claim. Let
	\[
	W\coloneqq\{[\delta_{t_1}(x_1),w_1]\cdots [\delta_{t_k}(x_k),w_k] \,|\, t_i \in \R
	\},
	\]
	which is a group by Corollary~\ref{splitlemma} and since $ t\mapsto \delta_t(x_i) $ is a one-parameter subgroup for each $ i \in \{1,\ldots,k\}$.
	Now $\hqs \sus W $ by definition of $ \sigma $ and $ W \sus \overline{\hqs} $ by continuity of dilations.	We define $ \tilde{\sigma}\colon \R \times W \to W $ by
	\[
	\tilde{\sigma}_\lambda([\delta_{t_1}(x_1),w_1]\cdots [\delta_{t_k}(x_k),w_k]) = [\delta_{\lambda t_1}(x_1),w_1]\cdots [\delta_{\lambda t_k}(x_k),w_k].
	\]
	This map is continuous and it defines an $ \R $-vector space structure on $ W $: since $ \tilde{\sigma} $ is a continuous extension of $ \sigma $, it is easy to show that $ \tilde{\sigma} $ fulfills the conditions in Definition~\ref{def:qstr}. Hence $ W $ is a $ k $-dimensional real topological vector space. Therefore, by Theorem~\ref{wiki} and Lemma~\ref{closedgp}, $ W $ is closed and so $ W = \overline{\hqs} $.
	We conclude the proof by noting that $G =  \overline{\hq} $, and hence $ G^{(s)} = \overline{\hq}^{(s)} = \overline{\hqs} $, where the last equality follows form the continuity of the group operation.
\end{proof}
%\begin{prop}
%	\label{nss}
%	The group $ G $ has no small subgroups: there exists a neighborhood of the identitiy that does not contain non-trivial subgroup of $ G $.
%\end{prop}
%\note{ei toimi: mistä tiedetään että $ H\cap G^{(s)} \neq e_G?$}
%\begin{proof}
%	Suppose the contrary and fix $ U\sus G $ a neighborhood of the identity $ e_G $ such that $ U\cap G^{(s)} $ is bounded with respect to the vector space norm of $ G^{(s)} $.  with $ H\sus U $ a non-trivial subgroup of $ G $. Now $ H\cap G^{(s)} $ is a subgroup of $ G^{(s)} $ that is contained in $ U\cap G^{(s)} $, which is a neighborhood of the identity $ e_G \in G^{(s)} $ in the relative topology. But $ G^{(s)} $ is a real topological vector space and hence linearly homeomorphic to $ \R^k $, which has no small subgroups.
%\end{proof}
\begin{thm}[\cite{MZ_1974} p. 52]
	\label{t:loccpt:quotient}
	If a topological group $ G $ has a closed subgroup $ H $ such that $ H $ and the coset-space $ G/H $ are locally compact, then $ G $ is locally compact.
\end{thm}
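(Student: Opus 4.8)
The plan is to prove local compactness of $G$ by exhibiting an explicit compact neighborhood of the identity $e$, built by gluing a compact neighborhood of $e$ in $H$ to a compact neighborhood of the base point in $G/H$ along the quotient map $\pi\colon G\to G/H$. Throughout I use two standard facts: $\pi$ is continuous and \emph{open}, and $G/H$ is Hausdorff because $H$ is closed. The candidate neighborhood will be $C:=\overline{V}\cap\pi^{-1}(K)$, where $V$ is a small symmetric neighborhood of $e$ tailored to $H$ and $K\subseteq\pi(V)$ is a compact neighborhood of $\pi(e)$ in $G/H$. Once $C$ is shown to be a compact neighborhood of $e$, the theorem follows.

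First I would set up the neighborhoods. Using local compactness of $H$, fix a compact neighborhood $A$ of $e$ in $H$ and an open set $O\ni e$ in $G$ with $O\cap H\subseteq A$. Since topological groups are regular and multiplication is continuous, choose a symmetric open $V\ni e$ with $\overline{V}\cdot V\subseteq O$ (in particular $\overline{V}\subseteq O$). Because $\pi$ is open, $\pi(V)$ is an open neighborhood of $\pi(e)$, and local compactness of $G/H$ lets me pick a compact neighborhood $K$ of $\pi(e)$ with $K\subseteq\pi(V)$. Then $C=\overline{V}\cap\pi^{-1}(K)$ is closed (as $K$ is compact, hence closed in the Hausdorff space $G/H$, and $\pi$ is continuous) and is a neighborhood of $e$ (it contains the open set $V\cap\pi^{-1}(\operatorname{int}K)$, which is nonempty as $\pi(e)\in\operatorname{int}K$). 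The whole content is to prove that $C$ is compact.

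For compactness I would argue with nets. Let $(x_i)$ be a net in $C$. Since $\pi(x_i)$ lies in the compact set $K$, after passing to a subnet $\pi(x_i)\to\pi(b)$ for some $b\in V$ (using $K\subseteq\pi(V)$). The key step — and the main obstacle — is a lifting lemma: because $\pi$ is open, convergence $\pi(x_i)\to\pi(b)$ lifts to $G$ modulo $H$, i.e. after a further subnet there are $h_i\in H$ with $x_ih_i\to b$. Granting this, the rest is bookkeeping: eventually $x_ih_i\in V$ (as $x_ih_i\to b\in V$ and $V$ is open), while $x_i^{-1}\in\overline{V}$ by symmetry, so $h_i=x_i^{-1}(x_ih_i)\in\overline{V}\cdot V\subseteq O$; since $h_i\in H$ this forces $h_i\in O\cap H\subseteq A$. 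Thus $(h_i)$ lies in the compact set $A$ and has a subnet $h_i\to h\in H$, whence $x_i=(x_ih_i)h_i^{-1}\to bh^{-1}$. As $C$ is closed, the limit lies in $C$, so every net in $C$ has a convergent subnet and $C$ is compact.

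I expect the lifting lemma to be the crux: it is exactly where openness of $\pi$ is used, and it substitutes for the (generally unavailable) existence of a continuous local section of $\pi$. Its proof is the standard subnet construction indexed by pairs consisting of an original index together with a neighborhood of $b$: for each neighborhood $W$ of $b$, openness of $\pi$ makes $\pi(W)$ a neighborhood of $\pi(b)$, so eventually $\pi(x_i)\in\pi(W)$, which means $x_ih\in W$ for some $h\in H$; choosing such an $h$ along a subnet refined by the neighborhood filter of $b$ yields $x_ih_i\to b$. The second delicate point is arranging the neighborhoods so that the lifted elements $h_i$ are trapped in the fixed compact set $A$; this is precisely why I chose $V$ with $\overline{V}\cdot V\subseteq O$ and $O\cap H\subseteq A$ at the outset, rather than trying to control closures of products directly.
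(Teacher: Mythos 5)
Your proof is correct, but note that the paper itself contains no proof of this statement: it is quoted verbatim from Montgomery--Zippin (\cite{MZ_1974}, p.~52) and used as a black box in the proof of Lemma~\ref{l:loccpt}. So there is nothing in the paper to compare against line by line; what you have supplied is essentially the classical argument from the literature, and it holds up. The two points you flag as delicate are indeed the crux, and you handle both correctly: the lifting lemma is the standard subnet construction over pairs (index, neighborhood of $b$), where openness of $\pi$ guarantees that for every neighborhood $W$ of $b$ one eventually has $\pi(x_i)\in\pi(W)$, hence $x_ih\in W$ for some $h\in H$; and the trap $h_i=x_i^{-1}(x_ih_i)\in\overline{V}\cdot V\subseteq O$, forcing $h_i\in O\cap H\subseteq A$, is exactly what makes the compactness of $A$ usable. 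Two minor remarks. First, you do not actually need to invoke regularity of topological groups: for a symmetric open $U\ni e$ one has $\overline{U}\subseteq U\cdot U$ (if $x\in\overline{U}$ then $xU$ meets $U$, so $x\in UU^{-1}=UU$), so choosing symmetric $U$ with $UUU\subseteq O$ gives $\overline{U}\cdot U\subseteq O$ directly; this also makes the argument work without any separation hypothesis on $G$. Second, your choice of $K$ inside the prescribed open set $\pi(V)$ uses that a locally compact \emph{Hausdorff} space has a base of compact neighborhoods at each point; you correctly secured Hausdorffness of $G/H$ from closedness of $H$, so this is fine, but it is worth being aware that this is where closedness of $H$ enters (besides making $\pi^{-1}(K)$, and hence $C$, closed). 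With those observations, every step checks out: $C=\overline{V}\cap\pi^{-1}(K)$ is a closed neighborhood of $e$, every net in $C$ admits a subnet converging in $C$, and homogeneity of $G$ then yields local compactness everywhere.
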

\begin{lemma}
	\label{l:loccpt}
	Let $ (G,\delta) $ be a nilpotent scalable group that is homogeneously generated by $ x_1, \ldots, x_r \in G $ over $ \R $. Then $ G $ is locally compact.
\end{lemma}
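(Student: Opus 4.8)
The plan is to induct on the nilpotency step $s$ of $G$, peeling off the central last layer $G^{(s)}$ and then reconstructing local compactness from that of the layer and of the quotient via the structure theorem of Montgomery--Zippin (Theorem~\ref{t:loccpt:quotient}). The inductive hypothesis I would carry is that Lemma~\ref{l:loccpt} holds for every nilpotent scalable group of step strictly less than $s$ that is homogeneously generated over $\R$ by finitely many elements.

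For the base case $s=1$, the group $G$ is abelian and coincides with its own last layer, $G^{(1)}=G=V_1(G)$; by Lemma~\ref{hsvsp} it is a finite-dimensional real topological vector space, hence locally compact by Theorem~\ref{wiki}. For the inductive step, set $N:=G^{(s)}$. Being the last nontrivial term of the lower central series, $N$ is central and normal in $G$, and by Lemma~\ref{hsvsp} it is a $k$-dimensional real topological vector space, so in particular locally compact; its closedness follows either from this fact or directly from Lemma~\ref{closedgp}. I would then pass to the quotient $K:=G/N$, equipped with the descended dilation $\hat\delta_t(gN):=\delta_t(g)N$, which is well defined precisely because $\delta_t(N)=N$ (exactly as the map $\hat\delta$ was handled in the proof of Lemma~\ref{findim}). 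One checks that $K$ is again a nilpotent scalable group, of step at most $s-1$, and that the projected elements $x_1N,\ldots,x_rN$ homogeneously generate $K$ over $\R$; the inductive hypothesis then gives that $K$ is locally compact. Since $N$ is a closed subgroup with both $N$ and $G/N$ locally compact, Theorem~\ref{t:loccpt:quotient} yields that $G$ is locally compact.

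The step that requires the most care is verifying that $K=G/N$ genuinely satisfies the inductive hypotheses before the recursion can close. Concretely, one must confirm that the quotient topology makes $K$ a \emph{Hausdorff} topological group (guaranteed because $N$ is closed and normal) on which $\hat\delta$ is continuous, that the nilpotency step drops: indeed $(G/N)^{(s)}=G^{(s)}N/N=\{e\}$, so the step is at most $s-1$, which is all the induction needs (in fact it is exactly $s-1$, since $G^{(s-1)}\neq G^{(s)}$ for a group of step $s$, whence $(G/N)^{(s-1)}=G^{(s-1)}/G^{(s)}$ is nontrivial), and that the images of the generators still generate: this is immediate from the surjectivity and continuity of the projection together with $\hat\delta_t(x_iN)=\delta_t(x_i)N$. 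Once these points are established, the remaining two ingredients, namely local compactness of $N$ and of $K$, are supplied respectively by Lemma~\ref{hsvsp} and by the inductive hypothesis, and the Montgomery--Zippin theorem finishes the argument.
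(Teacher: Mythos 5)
Your proof is correct and takes essentially the same route as the paper: both argue by induction on the nilpotency step, pass to the quotient $K=G/G^{(s)}$ with the descended dilations $\hat\delta$ (using Lemma~\ref{hsvsp} for the closedness and local compactness of the last layer, via Theorem~\ref{wiki}), and conclude with Theorem~\ref{t:loccpt:quotient}. The additional verifications you spell out (Hausdorffness of the quotient, the drop in step, generation by the projected elements) are precisely the points the paper leaves implicit, so nothing is missing.
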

\begin{proof}
	The proof is again by induction on step. If step $ s=1 $, the group $ G $ is a real topological vector space with basis $ \{x_1,\ldots, x_r\} $ and hence locally compact by Theorem~\ref{wiki}. Assume that the claim holds for step $ s-1 $ and consider $ K\coloneqq G/G^{(s)}$, which is generated by $x_1 G^{(s)},\ldots,x_r G^{(s)} $ with dilations $ \hat{\delta}_t(x G^{(s)}) \coloneqq \delta_t(x) G^{(s)}$. Now $ K $ is indeed an $ \R $-scalable topological group, since $ G^{(s)} $ is a closed normal subgroup of $ G $ by Lemma~\ref{hsvsp}. Hence $ K $ is locally compact by the induction hypothesis, and by Theorem~\ref{wiki} the group $ G^{(s)} $ is locally compact as well. Finally Theorem~\ref{t:loccpt:quotient} proves the claim.
\end{proof}
To prove Proposition~\ref{p:isCarnotSec2}, we use the result of Siebert below.
\begin{defn}
	Let $ G $ be a topological group. A continuous automorphism $ \zeta $ of $ G $ is said to be \emph{contractive} if $ \lim_{n\to \infty}\zeta^n(x) = e_G $ for all $ x\in G $. A group that admits a contractive continuous automorphism is called \emph{contractible}.
\end{defn}
\begin{thm}[Corollary 2.4 in \cite{Siebert}]
	\label{t:siebert}
	A topological group $ G $ is a positively graduable Lie group if and only if it is connected, locally compact and contractible. In particular, if $ \zeta \in \mathrm{Aut}(G)$ is contractive, then the graduation $ \bigoplus_{t > 0}V_t  $ given by $ \zeta $ is such that
	\[
	\{ X\in \mathrm{Lie}(G) \,|\, (\der \zeta - \alpha \,\mathrm{id}) X = 0 \} \sus V_{-\ln|\alpha|}
	\]
\end{thm}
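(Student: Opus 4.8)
The plan is to prove the two implications separately and to read the eigenspace refinement off the construction in the harder direction. \emph{If $G$ is a positively graduable Lie group}, write $\g=\mathrm{Lie}(G)=\bigoplus_{t>0}V_t$ with $[V_s,V_t]\sus V_{s+t}$, only finitely many degrees occurring and $V_u=0$ otherwise. Positivity forces $\g$ to be nilpotent, since an iterated bracket of length $\ell$ lies in some $V_u$ with $u\geq\ell\,t_{\min}$ and hence vanishes for $\ell$ large; thus $G$ is a simply connected nilpotent Lie group, so it is connected and locally compact and $\exp\colon\g\to G$ is a diffeomorphism. Setting $\der\zeta:=\bigoplus_t e^{-t}\,\mathrm{id}_{V_t}$ defines an algebra automorphism (it respects brackets because $e^{-s}e^{-t}=e^{-(s+t)}$) that integrates through $\exp$ to a continuous automorphism $\zeta$ of $G$; every eigenvalue of $\der\zeta$ has modulus $e^{-t}<1$, so $(\der\zeta)^n\to0$ and $\zeta^n(x)\to e_G$ for all $x$, giving contractibility.

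\emph{Conversely, suppose $G$ is connected, locally compact and contractible.} The first and hardest step is to show $G$ is a Lie group. I would promote the pointwise hypothesis $\zeta^n(x)\to e_G$ to uniform convergence on compacta: for a compact neighborhood $V$ of $e_G$ the closed sets $F_n:=\{x\in V:\zeta^k(x)\in V\text{ for all }k\geq n\}$ cover $V$, so by Baire some $F_n$ has nonempty interior, and a translation argument spreads this to genuine uniform contraction, yielding a neighborhood $U$ with $\zeta(\overline U)\sus U$ and $\bigcap_n\zeta^n(\overline U)=\{e_G\}$. The crux is then that $G$ has no small subgroups: inside $\overline U$ every subgroup has compact closure, so it suffices to exclude arbitrarily small nontrivial compact subgroups, and a nontrivial compact group admits no contractive automorphism (such an automorphism preserves Haar probability measure yet would push it to $\delta_{e_G}$). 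Once ``no small subgroups'' is in place, the Gleason--Yamabe solution of Hilbert's fifth problem gives that $G$ is a Lie group.

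With $G$ a Lie group, $\zeta$ differentiates to an automorphism $\der\zeta$ of $\g$, and uniform contraction gives $(\der\zeta)^n\to0$, so every eigenvalue $\alpha$ of $\der\zeta$ has $|\alpha|<1$. Decompose $\g_{\C}$ into generalized eigenspaces and, for each $t>0$, let $V_t$ be the real subspace coming from those eigenvalues $\alpha$ with $|\alpha|=e^{-t}$ (complex eigenvalues occur in conjugate pairs of equal modulus, so $V_t$ is real and $\g=\bigoplus_{t>0}V_t$). As $\der\zeta$ is an automorphism, moduli multiply under brackets, so $[V_s,V_t]\sus V_{s+t}$; with finitely many positive indices this is a positive graduation and, as in the forward direction, forces $\g$ nilpotent. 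Finally $G$ is simply connected: a connected nilpotent Lie group is simply connected iff it has no nontrivial compact subgroup, its maximal compact subgroup is a characteristic central torus $T$ with $\zeta(T)=T$, and $\zeta|_T$ would be a contractive automorphism of a compact group, forcing $T=\{e_G\}$. This exhibits $G$ as a positively graduable Lie group, and by construction each generalized eigenvector for $\alpha$ lies in $V_{-\ln|\alpha|}$, which is the asserted inclusion $\{X\in\g:(\der\zeta-\alpha\,\mathrm{id})X=0\}\sus V_{-\ln|\alpha|}$.

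The main obstacle is establishing the Lie group structure in the converse. Once the smooth structure is available, the graduation and simple-connectedness arguments are essentially linear algebra and elementary compactness, but deriving a Lie group from a merely pointwise contraction requires the full Gleason--Yamabe theory; the subtle part is converting the contraction into the no-small-subgroups property, which rests both on the Baire-category promotion to uniform contraction and on making the incompatibility of contraction with the compact normal subgroups in the projective-limit description of $G$ effective.
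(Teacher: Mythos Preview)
The paper does not prove this statement at all: it is quoted verbatim as Corollary~2.4 of Siebert and used as a black box in the proof of Proposition~\ref{p:isCarnotSec2}. So there is no ``paper's own proof'' to compare your proposal against.

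Your sketch is a reasonable outline of how Siebert's theorem is actually proved, and the forward direction together with the linear-algebra part of the converse (grading $\g$ by the moduli of the eigenvalues of $\der\zeta$, and reading off the eigenspace inclusion) is correct. The genuine gap is exactly where you flag it: your no-small-subgroups argument does not go through as written. You say that a small subgroup $H\subset\overline U$ has compact closure and then invoke ``a nontrivial compact group admits no contractive automorphism,'' but $\zeta$ need not map $H$ (or $\overline H$) into itself, so you cannot apply that fact to $H$. What one actually needs is a $\zeta$-invariant compact subgroup, and producing one requires the full structure theory (the Gleason--Yamabe projective-limit description of $G$ by compact normal subgroups, together with an argument that $\zeta$ interacts well with this system). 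Your last paragraph concedes this, but the body of the proof presents the step as if it were settled; as written it is not a proof of the hard direction.
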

\begin{proof}[Proof of Proposition~\ref{p:isCarnotSec2}]
	We proved in Lemma~\ref{l:loccpt} that the group $ G $ is locally compact. It is also connected, since the map $ \gamma_x\colon [0,1]\to G $, $ \gamma_x(t) =  \delta_t(x) $ is a continuous path between $ e_G $ and $ x $ for all $ x\in G $. Additionally, the group $ G $ is contractible as the automorphisms $ \delta_t $ are contractive for all $ t\in (0,1) $; for a fixed $ t\in ( 0,1) $ we have that
	\[
	\lim_{n\to \infty} \delta_t^n(x) = 	\lim_{n\to \infty} \delta_{t^n}(x)  = \delta_0(x) = e_G
	\]
	for all $ x\in G $.
	Hence by Theorem~\ref{t:siebert} the group $ G $ is a Lie group and each $ \delta_t $, $ t\in (0,1) $, defines a positive graduation for Lie$ (G) $. We claim that, in order to prove that $ G $ admits a structure of a Carnot group, it is enough to find a graduation of Lie$(G)  $ such that $ V_1 $ generates the whole of Lie$ (G) $. Indeed, a stratification of a Lie algebra $ \g $ is equivalent to a positive grading whose degree-one layer generates $ \g $ as a Lie algebra.
	
	Let us consider the graduation given by $ \delta_{1/e} $. By Theorem~\ref{t:siebert},
	\[
	\{ X\in \mathrm{Lie}(G) \,|\, (\der \delta_{1/e} - \dfrac{1}{e}\,\mathrm{id}) X = 0 \} \sus V_{-\ln(1/e)} = V_1.
	\]
	Let then $ x\in \{x_1,\ldots,x_r\} $. Since the map $ t\mapsto \delta_t(x) $, $ t\in \R $, is now a one-parameter subgroup of a Lie group, there exists $ X\in $ Lie$ (G) $ such that
	\[
	\delta_t(x) = \exp(t X)
	\]
	for all $ t \in \R $.	Additionally, $ \exp \colon  $ Lie$ (G)\to G $ is a global diffeomorphism, so
	\[
	\log( \delta_t(x)) = tX
	\]
	and on the other hand
	\[
	\log( \delta_t(x)) = \log( \delta_t(\exp(X))) = \log(\exp (\der \delta_t(X))) = \der \delta_t(X).
	\]
	Hence
	\[
	\der \delta_{1/e}(X) =  \dfrac{1}{e}X
	\]
	and $ X \in V_1 $. Therefore $ \log(x_i)\in V_1 $  for all $ i\in \{1,\ldots,r\} $. Notice that, for any $ Y\in \g $, we have for some $ l\in \N $ and $ i_l \in \{1,\ldots,r\} $ that
	\[
	\exp(Y) = \delta_{t_1}(x_{i_1})\cdots \delta_{t_l}(x_{i_l}) = \exp(t_1 \log(x_{i_1}))\cdots \exp(t_l \log(x_{i_l})).
	\]
	Hence $ \{\log(x_1),\ldots, \log(x_r)\} $ generates Lie$ (G) $ as a Lie algebra by the Baker-Campbell-Hausdorff formula and $ V_1 = \spann{\log(x_1),\ldots, \log(x_r)} $. Thus the graduation given by $ \delta_{1/e} $ is a stratification and $ G $ has a structure of a Carnot group.
	
	We still need to verify that the one-parameter family $ (\delta_t)_{t\in \R} $ of Lie group automorphisms are the Carnot group dilations given by the stratification. The Carnot group dilation of factor $ t\neq 0 $ is by definition the unique map $ \zeta_t \in \mathrm{Aut}(G) $ such that
	\begin{equation}
	\tag{$ \star $}
	\label{Carnotdilation}
	\der \zeta_t(X) = t^k X \quad \text{for all }X\in V_k,
	\end{equation}
	and $ \der \zeta_0 $ is the zero map. Obviously $ \der \delta_0 = 0 $, so consider the case $ t \neq 0 $.	Now each $ V_k $ is spanned by simple commutators of $ \log(x_i) $, $ i\in \{1,\ldots,r\} $. Since $ \der \delta_t $ is a Lie algebra homomorphism, we get for these elements that
	\begin{align*}
	\der\delta_t([\log(x_{i_k}),\ldots,[\log(x_{i_{2}}),\log(x_{i_1})]]) &=  [\der \delta_t(\log(x_{i_k})),\ldots,[\der \delta_t(\log(x_{i_{2}})),\der \delta_t(\log(x_{i_1}))]] \\ &= [t \log(x_{i_k}),\ldots,[t \log(x_{i_{2}}),t \log(x_{i_1})]] \\ &= t^k [\log(x_{i_k}),\ldots,[\log(x_{i_{2}}),\log(x_{i_1})]].
	\end{align*}
	By linearity of $ \der \delta_t $ we conclude that the maps $ \der \delta_t $ satisfy condition \eqref{Carnotdilation}. Hence the scalable group $ (G,\delta) $ is a Carnot group and the dilations $ \delta_t $, $t\in \R $, are the unique Carnot group dilations given by the stratification.
\end{proof}

After Proposition~\ref{p:isCarnotSec2}, we shall easily get  Proposition~\ref{exist:filtration}.
\begin{proof}[Proof of Proposition~\ref{exist:filtration}]
Assume first that $(N_m)_m$ is a filtration by Carnot subgroups of $G$. Then the set $\cup_m V_1(N_m) \sus V_1(G)$ and it generates $\cup_m N_m$. As $\cup_m N_m$ is dense in $G$, we get that $V_1(G)$ generates $G$ as a scalable group.  Assume then that $V_1(G)$ generates and let $ (a_n)_{n\in \N}\sus V_1(G) $ be a dense sequence in $ V_1(G) $. This sequence generates a dense subgroup of $ G $, and choosing $ N_m $ to be the scalable group generated by $ \{a_1,\dots, a_m \} $ gives $ G $ a filtration by Carnot groups by Proposition \ref{p:isCarnotSec2}.
\end{proof}

It would be interesting to find geometric conditions that would allow us to conclude that $V_1(G)$ generates $G$.  Indeed, in the case of simply connected nilpotent Lie groups admitting dilations, geodicity then implies that the first layer generates the entire group as rectifiable curves can be approximated by horizontal line segments.  Thus, we make the following conjecture.

\begin{conjecture}
If $G$ is a metric scalable group that is separable and nilpotent and its distance is geodesic, then
 its first layer $V_1(G)$  generates $G$.
\end{conjecture}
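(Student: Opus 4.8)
The plan is to adapt the finite-dimensional argument behind the hint: since the metric is geodesic, it suffices to show that every $g\in G$ lies in the closed subgroup $\overline{\langle V_1(G)\rangle}$ generated by $\{\delta_t(a):a\in V_1(G),\,t\in\R\}$, and to obtain this by approximating a geodesic from $e_G$ to $g$ by polygonal paths whose edges are left-translates of horizontal lines. Recall that for $a\in V_1(G)$ the curve $t\mapsto\delta_t(a)$ is a one-parameter subgroup, and by left-invariance together with the homogeneity $d(\delta_t(p),\delta_t(q))=|t|\,d(p,q)$ one computes $d(\delta_t(a),\delta_s(a))=|t-s|\,d(e_G,a)$; thus the elements of $V_1(G)$ are exactly the directions spanning geodesic lines through the identity, i.e.\ the ``horizontal line segments'' of the hint.

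First I would fix a unit-speed geodesic $\gamma\colon[0,L]\to G$ with $\gamma(0)=e_G$ and $\gamma(L)=g$, and define, wherever the limit exists, the horizontal derivative
\[
\dot\gamma(t):=\lim_{h\to 0}\delta_{1/h}\bigl(\gamma(t)^{-1}\gamma(t+h)\bigr).
\]
Because $\gamma$ has unit speed and $d(\delta_{1/h}(p),e_G)=|h|^{-1}d(p,e_G)$, any such limit $w$ satisfies $d(e_G,w)=1$. The decisive claim is that $\dot\gamma(t)$ exists for almost every $t\in[0,L]$ and that $\dot\gamma(t)\in V_1(G)$. The geometric heuristic is that the blow-up of a geodesic at an interior point is a dilation-invariant geodesic line through $e_G$, and such a line must be a one-parameter subgroup; one would try to show that a unit vector $w$ for which $t\mapsto\delta_t(w)$ fails to be a subgroup cannot arise as the tangent of a length-minimizing curve, since the excess (higher-commutator) terms would force extra length at small scales.

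Granting this, I would reconstruct $g$ by integrating: over a fine partition $0=t_0<\cdots<t_N=L$ form the ordered product $\prod_{i=1}^{N}\delta_{t_i-t_{i-1}}\bigl(\dot\gamma(t_{i-1})\bigr)$, which manifestly lies in $\langle V_1(G)\rangle$; using nilpotency to truncate the Baker--Campbell--Hausdorff--type error one would show that its endpoint converges to $\gamma(L)=g$, whence $g\in\overline{\langle V_1(G)\rangle}$. Since $g$ is arbitrary this gives that $V_1(G)$ generates $G$; as $G$ is separable, $V_1(G)\subseteq G$ is separable, so Proposition~\ref{exist:filtration} then even yields a filtration by Carnot subgroups.

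The principal obstacle is that $G$ is in general not locally compact, so neither hard step is routine. The blow-up vectors $w_h:=\delta_{1/h}(\gamma(t)^{-1}\gamma(t+h))$ all lie on the unit sphere of $G$, but without local compactness one cannot extract convergent subsequences, so even the existence of the group-theoretic derivative $\dot\gamma(t)$ is unclear: the classical metric differentiability of Lipschitz curves supplies only the scalar metric speed, not convergence to a one-parameter subgroup, so one is forced to differentiate a single curve valued in an infinite-dimensional group lacking a Lie algebra. Even if existence and horizontality are established, the final ``fundamental theorem of calculus'' step requires uniform, scale-robust estimates on products of horizontal segments that are not available a priori in a general metric scalable group. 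Resolving these points---presumably by first proving the statement for groups exhausted by Carnot subgroups and passing to the limit, or by developing a genuine metric differentiation theory for curves into these groups---is precisely what keeps the statement a conjecture rather than a theorem.
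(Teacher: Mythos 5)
This statement is stated in the paper as a \emph{conjecture}: the authors give no proof, only the finite-dimensional motivation that precedes it (in a simply connected nilpotent Lie group with dilations, geodicity implies that the first layer generates, because rectifiable curves can be approximated by horizontal line segments). Your proposal is essentially a careful expansion of that same heuristic, and---as you yourself candidly state in the final paragraph---it is a research program, not a proof. So the verdict is that there are genuine gaps, and to your credit you have identified the main ones; let me make them concrete. First, the existence almost everywhere of the blow-up derivative $\dot\gamma(t)$ is exactly what fails to be routine: in a finite-dimensional Carnot group this comes from compactness of closed balls (or from Pansu's theorem applied to the Lipschitz curve $\gamma$), whereas here the unit sphere need not be compact and no differentiation theory for curves into such groups is available; metric differentiability of Lipschitz curves into metric spaces gives only the scalar speed, as you note. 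Second, even granting subsequential limits $w$ of the $w_h$, your decisive claim that $w\in V_1(G)$ rests on the assertion that a dilation-invariant geodesic line through $e_G$ must be a one-parameter subgroup, which is unproven in this generality; your ``excess length from higher commutators'' heuristic tacitly uses BCH-type expansions, but nilpotency of the abstract group gives only commutator identities, not a Lie algebra---$G$ need not be locally compact or a Banach manifold (the paper proves local compactness, via Proposition~\ref{p:isCarnotSec2} and Lemma~\ref{l:loccpt}, only for \emph{finitely} generated scalable groups, and exhibits $\ell_1(\mathbb{H}^1)$ as a group that is not a Banach Lie group). Third, the ``integration'' step needs uniform, scale-independent estimates on ordered products of horizontal segments, and nothing in the axioms of a metric scalable group supplies them.

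One further caution about your proposed repair: ``first proving the statement for groups exhausted by Carnot subgroups and passing to the limit'' is circular as stated, since for a group admitting a filtration by Carnot subgroups the conclusion that $V_1(G)$ generates is already the easy direction of Proposition~\ref{exist:filtration}; the whole content of the conjecture is to produce horizontal generation (hence, via the other direction of Proposition~\ref{exist:filtration}, a filtration) from the purely metric hypothesis of geodicity, with no such structure assumed. In short: your outline coincides with the authors' own motivating picture and correctly diagnoses why it does not close, but it does not settle the statement, which remains open.
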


%\section{Metric scalable groups as  infinite-dimensional Carnot groups}
%
%\blue{This section is incomplete}
%% In here put some definitions and easy propositions
%The purpose is to give a generalization of the notion of Banach space to include Carnot group and then perform a (infinite-dimensional) non-commutative functional analysis.
%\subsection{Metrization of scalable groups}
%Recall that a topological group is metrizable if and only if  it has a countable base at
%the identity; and in this case 
%it can be metrized  by a left-invariant metric, see \cite[page 34]{MZ_1974}.

\section{Negligible sets of metric scalable groups}
In this section $ (G,d,\delta) $ denotes a metric scalable group (according to Definition~\ref{def:metric:scalable:group}).

\subsection{Elementary properties of metric scalable groups}

\begin{lemma} \label{l:isometry}
%	Let $G$ be a metric scalable group. 
	 For each $p \in V_1(G)$, the map $t \mapsto \delta_t(p)$ is a homothetic embedding from $\R$ to $G$.
\end{lemma}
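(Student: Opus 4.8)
The plan is to prove the single quantitative identity
\[
d(\delta_t(p), \delta_s(p)) = c\,|t - s| \qquad \text{for all } t,s \in \R,
\]
where $c := d(e_G, p)$. This one identity is exactly what "homothetic" asks for, and it also delivers the "embedding" part: whenever $p \neq e_G$ (so $c > 0$) distinct parameters land at positive distance, hence the map is injective, and being a similarity it is automatically a homeomorphism onto its image. So essentially everything reduces to computing $d(\delta_t(p),\delta_s(p))$.

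First I would fix $p \in V_1(G)$ and collapse a general pair of parameters to a single one. Using left-invariance of $d$, and then the one-parameter subgroup property defining membership in $V_1(G)$ — which in particular gives $\delta_s(p)^{-1} = \delta_{-s}(p)$, since $\delta_s(p)\delta_{-s}(p) = \delta_{s-s}(p) = \delta_0(p) = e_G$ — I obtain
\[
d(\delta_t(p), \delta_s(p)) = d\bigl(\delta_s(p)^{-1}\delta_t(p),\, e_G\bigr) = d\bigl(\delta_{-s}(p)\delta_t(p),\, e_G\bigr) = d\bigl(\delta_{t-s}(p),\, e_G\bigr).
\]
Then, writing $u := t - s$, I would use that each $\delta_u$ fixes the identity (for $u \neq 0$ because $\delta_u \in \aut(G)$, and for $u = 0$ because $\delta_0 \equiv e_G$), so that $e_G = \delta_u(e_G)$, and invoke the metric-scaling hypothesis $d(\delta_u(x),\delta_u(y)) = |u|\,d(x,y)$ to conclude
\[
d(\delta_u(p), e_G) = d\bigl(\delta_u(p),\, \delta_u(e_G)\bigr) = |u|\,d(p, e_G) = c\,|t-s|.
\]
Chaining the two displays gives the claimed homothety identity.

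I do not expect any genuine obstacle: the argument is just a short composition of the three structural properties of a metric scalable group — left-invariance, the scaling law for $\delta_u$, and the fact that $p \in V_1(G)$ makes $t \mapsto \delta_t(p)$ a homomorphism from $\R$. The only points deserving a word of care are the inverse identity $\delta_s(p)^{-1} = \delta_{-s}(p)$ (which is precisely where the $V_1(G)$ hypothesis enters) and the degenerate case $p = e_G$, for which $c = 0$ and the map is constant; in that case one either excludes it tacitly or notes that the homothety statement still holds with ratio $0$ while "embedding" is understood for $p \neq e_G$.
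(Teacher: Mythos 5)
Your proof is correct and follows essentially the same route as the paper's: left-invariance plus the one-parameter subgroup property reduce $d(\delta_t(p),\delta_s(p))$ to $d(e_G,\delta_{t-s}(p))$, and the metric-scaling law (with $\delta_u(e_G)=e_G$) yields $|t-s|\,d(e_G,p)$. Your only additions are spelling out the inverse identity $\delta_s(p)^{-1}=\delta_{-s}(p)$ and flagging the degenerate case $p=e_G$, which the paper implicitly excludes by asserting $c>0$.
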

\proof
%\begin{proof}
	Let $c = d(0,\delta_1(v)) > 0$.  We claim that $d(\delta_\alpha(v),\delta_\beta(v)) = c|\alpha-\beta|$.  Indeed, as $\delta_t(v)$ is a one parameter subgroup, we get by left-invariance that
	\[
	d(\delta_\alpha(v),\delta_\beta(v)) = d(0,\delta_{\beta - \alpha}(v)) = |\beta - \alpha| d(0,v) = c|\beta - \alpha|.
	\qedhere
	\]
%\end{proof}

 \begin{lemma} \label{l:compact-perturb}
 	Let $K \subset G$ be a totally bounded set and $\epsilon > 0$.  There exists $\delta > 0$ so that
 	\begin{align*}
 	d(hk,k) < \epsilon, \qquad \forall h \in B(0,\delta), k \in K.
 	\end{align*}
 \end{lemma}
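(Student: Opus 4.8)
The plan is to reduce the uniform estimate to a finite problem via total boundedness and then transfer a pointwise continuity bound to all of $K$ using left-invariance. The identity element is denoted $0 = e_G$, consistent with the statement $B(0,\delta)$.

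First I would record the pointwise version of the claim. Since $G$ is a topological group and $d$ is admissible (so it induces the topology), for each fixed $k \in G$ the map $h \mapsto hk$ is continuous and sends $0$ to $k$; hence there is $\delta(k) > 0$ with $d(hk,k) < \epsilon/2$ whenever $h \in B(0,\delta(k))$. The only issue is that $\delta(k)$ a priori depends on $k$, so this does not by itself give the desired uniformity over $K$.

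Next I would invoke total boundedness of $K$: cover $K$ by finitely many balls $B(k_1,\eta),\dots,B(k_n,\eta)$ with $\eta = \epsilon/4$, and set $\delta := \min_{1\le i\le n}\delta(k_i)$, which is positive as a finite minimum of positive numbers. The key step—and the reason total boundedness is exactly what is needed—is the triangle estimate that uses left-invariance to cancel the unknown right factor. Given $h \in B(0,\delta)$ and $k \in K$, pick $i$ with $d(k,k_i) < \eta$; then
\[
d(hk,k) \le d(hk,hk_i) + d(hk_i,k_i) + d(k_i,k) = d(k,k_i) + d(hk_i,k_i) + d(k_i,k) < \frac{\epsilon}{4} + \frac{\epsilon}{2} + \frac{\epsilon}{4} = \epsilon,
\]
where the crucial equality $d(hk,hk_i) = d(k,k_i)$ is left-invariance applied with the common left factor $h$.

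I do not expect a genuine obstacle here; the whole content is the observation that left-invariance controls the perturbed pair $(hk,hk_i)$ for \emph{every} $h$ simultaneously, while the continuity bound at the finitely many centers $k_i$ controls $(hk_i,k_i)$, and total boundedness is precisely what legitimizes reducing to finitely many centers. The only points to get right are the bookkeeping of the constants ($\eta = \epsilon/4$ and the $\epsilon/2$ threshold at each center) and the appeal to admissibility of $d$ to pass from topological continuity of multiplication to the metric estimate.
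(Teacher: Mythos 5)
Your proof is correct and follows essentially the same route as the paper's: a finite $\epsilon/4$-net from total boundedness, a pointwise continuity bound at the finitely many centers (with $\delta$ the minimum), and the triangle inequality combined with left-invariance to cancel the factor $h$ on the pair $(hk,hk_i)$. The only differences are cosmetic bookkeeping of constants, so there is nothing to add.
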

  \begin{proof}
 	As $K$ is totally bounded, there is a finite number of points $\{y_1,...,y_n\} \in G$ so that $K \subseteq \bigcup_{j=1}^n B(y_j,\epsilon/4)$.  Choose $\delta$ small enough so that for any $h \in B(0,\delta)$, we have that $\max_{1 \leq j \leq n} d(y_j,hy_j) < \epsilon/4$.  Now let $k \in K$ and $y_i$ be so that $d(k,y_i) < \epsilon/4$.  Then for any $h \in B(0,\delta)$, we get
 	\begin{align*}
 	d(hk,k) \leq d(hk,hy_i) + d(hy_i,y_i) + d(y_i,k) = 2d(y_i,k) + d(hy_i,y_i) < \epsilon,
 	\end{align*}
 	where we used left-invariance of the metric.
 \end{proof}

\begin{lemma} \label{l:box}
	Let $ G $ be a complete metric scalable group.
  For every $i\in \N$, let  $\psi_i: \R \to G $ be  continuous   such that $\psi_i(0) = e_G$.  Then for every non-empty open set $U$ there exists a sequence of positive numbers $\alpha_1,\alpha_2,... > 0$ so that
  the  map
  \begin{align*}
    \phi : \prod_{i=1}^\infty [0,\alpha_i] &\to G \\
    (t_1,t_2,...) &\mapsto \cdots \psi_2(t_2) \psi_1(t_1)
  \end{align*}
 is well defined and has range in $U$.
% .  Furthermore, the image is bounded. 
% Here, the image of an infinite sequence $(x_1,x_2,...)$ is defined as the metric limit of the following sequence in $G$:
%  $$\delta_{x_1}(v_1), \delta_{x_2}(v_2)\delta_{x_1}(v_1),...$$
\end{lemma}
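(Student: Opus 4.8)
The plan is to determine the $\alpha_i$ one at a time and to control the partial products $P_n := \psi_n(t_n)\cdots\psi_1(t_1)$ (with $P_0 := e_G$) so that, uniformly over all $(t_1,t_2,\dots)\in\prod_i[0,\alpha_i]$, the sequence $(P_n)_n$ is Cauchy; completeness of $G$ then produces the limit $\phi(t_1,t_2,\dots)=\lim_n P_n$, which is exactly what ``well defined'' should mean, and along the way I will keep the whole image inside a prescribed ball about the identity. Note first that $\phi(0,0,\dots)=e_G$ whatever the $\alpha_i$ are, so for the image to lie in $U$ we necessarily have $e_G\in U$; I fix $\epsilon>0$ with $B(e_G,\epsilon)\subseteq U$ and fix positive numbers $\epsilon_n$ with $\sum_{n\ge1}\epsilon_n<\epsilon$ (say $\epsilon_n=2^{-n-1}\epsilon$).

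The construction is inductive. Suppose $\alpha_1,\dots,\alpha_{n-1}$ have been chosen (for $n=1$ nothing is chosen and $K_0:=\{e_G\}$). The set $K_{n-1}:=\{\psi_{n-1}(s_{n-1})\cdots\psi_1(s_1):s_i\in[0,\alpha_i]\}$ is the image of the compact box $\prod_{i=1}^{n-1}[0,\alpha_i]$ under the map $(s_1,\dots,s_{n-1})\mapsto\psi_{n-1}(s_{n-1})\cdots\psi_1(s_1)$, which is continuous because $G$ is a topological group and each $\psi_i$ is continuous; hence $K_{n-1}$ is compact, in particular totally bounded. Applying Lemma~\ref{l:compact-perturb} to $K_{n-1}$ and $\epsilon_n$ yields $\delta_n>0$ with $d(hk,k)<\epsilon_n$ for all $h\in B(e_G,\delta_n)$ and $k\in K_{n-1}$. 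Since $\psi_n$ is continuous and $\psi_n(0)=e_G$, I may now pick $\alpha_n>0$ so small that $\psi_n([0,\alpha_n])\subseteq B(e_G,\delta_n)$.

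With the $\alpha_n$ chosen this way, fix any $(t_1,t_2,\dots)\in\prod_i[0,\alpha_i]$. For every $n\ge1$ one has $P_{n-1}\in K_{n-1}$ and $\psi_n(t_n)\in B(e_G,\delta_n)$, so $d(P_n,P_{n-1})=d(\psi_n(t_n)P_{n-1},P_{n-1})<\epsilon_n$. Consequently $d(P_m,P_n)\le\sum_{j>n}\epsilon_j\to0$ as $n\to\infty$, uniformly in the $t_i$, so $(P_n)_n$ is Cauchy; by completeness it converges to a point $\phi(t_1,t_2,\dots)\in G$, and $\phi$ is well defined (indeed continuous, being a uniform limit of the continuous maps $P_n$). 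Finally the telescoping estimate gives $d(\phi(t_1,t_2,\dots),e_G)\le\sum_{n\ge1}\epsilon_n<\epsilon$, so the image of $\phi$ lies in $B(e_G,\epsilon)\subseteq U$.

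The one genuinely nontrivial point is that the new factor $\psi_n(t_n)$ multiplies on the \emph{left}, so the left-invariance of $d$ does \emph{not} control $d(P_n,P_{n-1})=d(\psi_n(t_n)P_{n-1},P_{n-1})$; this is precisely the quantity bounded by Lemma~\ref{l:compact-perturb}, but only once the left factors $P_{n-1}$ are known to range over a fixed totally bounded set. This is why the $\alpha_n$ must be produced sequentially, each one chosen after the compact set $K_{n-1}$ determined by the earlier choices is available, rather than all at once.
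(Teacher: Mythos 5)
Your proof is correct and takes essentially the same route as the paper: a recursive choice of $\alpha_n$ via Lemma~\ref{l:compact-perturb} applied to the compact set of partial products, a uniform Cauchy estimate with a summable sequence of tolerances, and completeness of $G$, with the telescoping bound confining the image to a small ball around $e_G$. If anything you are slightly more careful than the paper, which fixes $\alpha_1=1$ without shrinking the first factor and compresses the necessity of $e_G\in U$ into the remark that one ``may assume $U$ contains the unit ball at $e_G$.''
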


\begin{proof}
We may assume that $U$   contains the unit ball at $e_G$.
  Note that for each $k \in \N$, $K_k := \phi(\prod_{i=1}^k [0,\alpha_i] \times (0,0,...))$ is a compact set in $G$.  We can construct $\alpha_i$ recursively.  Let $\alpha_1 = 1$ and choose $\alpha_i > 0$ so that
  $$\sup_{g \in K_i} \sup_{t \in [0,\alpha_{i+1}]} d(g, \psi_{i+1}(t)g) < 2^{-i}.$$
  This is possible by Lemma~\ref{l:compact-perturb} and the fact that $\psi_{i+1}(0) = e$ is continuous at 0.  Then each sequence defining a $\phi(t_1,t_2,...)$ is Cauchy and so the limit exists. The fact that the image is in $U$ also follows immediately.
\end{proof}

\subsection{Non-negligibility of open sets: Proof of Proposition \ref{Prop:no:interior}}

%\begin{proof}[Proof of Proposition \ref{Prop:no:interior}]

%  Choose a sequence of functions $\psi_1,\psi_2,...$ such that there is an increasing function $k : \N \to \N$ for which $k(1) = 1$, $k(i) = i$, $\psi_{k(i)}(t) = \delta_t(a_i)$, and the maps
%  \begin{align*}
%    \phi_n(x_1,...,x_{k(n)}) = \psi_{k(n)}(x_{k(n)}) \cdots \psi_1(x_1)
%  \end{align*}
%  form exponential coordinates of the second kind for the Carnot subgroup $H_n$ generated by $\{a_1,...,a_n\}$. {\color{red} $\psi_i(t)$ are supposed to be functions of the form $\exp(tv_i)$ for some vector in the Lie algebra of $H_n$.  The choice is supposed to be compatible so that the exponential coordinates obey the filtration.  Is this possible?}.
%
%
% I changed the notation k(n) ---> k_n, since k(n-1) looked too much like a multiplication...

Let $ (N_m)_{m\in\N} $ be
a filtration of $G$ by Carnot groups. 
Assume that a $(N_m)_m$-negligible set $\Omega $  contains an open non-empty set $U$. 
%Hence, $\vol_{N_m}(N_m \cap g A) = 0$ for all $g \in G$, 

%Let $k_m$ be the topological dimension of $N_m$.
For every $ {m\in\N} $, choose  $  k_m, v_m$ so that
%$ \{a_1, \ldots, a_{h_m}\} $ is a basis of $V_1(N_m)$ and 
$ \{v_1, \ldots, v_{k_m}\} $ is a basis of $\Lie(N_m)$.

%For all $ n\in \N $, let $ G_n $ be the scalable group generated by $ \{a_1, \ldots, a_n\} $. By Proposition~\ref{p:isCarnotSec2} each $ G_n $ has a structure of a Carnot group and is 
%For all $ n\in \N $, there is $k_n\in\N$ such that hence diffeomorphic to some $ \R^{k_n} $. 

Let  $\psi_m: \R \to G $ be $\psi_m(t):=\exp(t \log(v_m)) $. With the above choice of $U$,
let $ (\alpha_m)  $ and $ \phi $ be as in Lemma~\ref{l:box}.
Notice that the maps $ \tilde{\phi}_m \colon \R^{k_m} \to N_m $, $   \tilde{\phi}_m(t_1,t_2,\ldots, t_{k_m}) := \psi_{k_m}( t_{k_m}) \cdots \psi_2(t_2) \psi_1(t_1)$, are diffeomorphisms.

%. Let $ \tilde{\phi}_1 \colon \R \to G_1 $ be the exponential map $ t \mapsto \exp(t \log(a_1)) $. 

%For $ m > 1 $ we define $ \tilde{\phi}_m $ as
%
%\[
%\tilde{\phi}_n (x_1,\ldots, x_{k(n)}) = \exp(x_{k(n)}v_{k(n)})\cdots \exp(x_{k(n-1)+1}v_{k(n-1)+1}) \tilde{\phi}_{n-1}(x_1,\ldots,x_{k(n-1)})
%\]
%
%\[
%\tilde{\phi}_n (x_1,\ldots, x_{k(n)}) = \left (\prod_{j=k(n-1)+1}^{k(n)} \exp(x_{j}v_{j}) \right) \tilde{\phi}_{n-1}(x_1,\ldots,x_{k(n-1)})
%\]
%
%:
%\[
%\tilde{\phi}_m (x_1,\ldots, x_{k_m}) := \left (\prod_{j=k_{m-1}+1}^{k_m} \exp(x_{j}v_{j}) \right) \tilde{\phi}_{m-1}(x_1,\ldots,x_{k_{m-1}}).
%\]
%where $ v_j $ are those basis vectors of $ \Lie(G_n) $ that are not elements of $ \Lie(G_{n-1}) $. 
Let then
\[
 \phi_m \coloneqq \tilde{\phi}_m\at{\prod_{i=1}^{k_m}[0,\alpha_i]}\,.
\]

%where $ \alpha_i > 0 $ are the numbers from Lemma~\ref{l:box} such that
%\[
% \phi \colon \prod_{i=1}^\infty [0,\alpha_i] \to G, \quad\phi(x_1,x_2,\ldots) \coloneqq \lim_{n\to \infty} \phi_n(x_1,\ldots, x_n)
%\]
%is well defined.

%  Let $U \subset G$ be an open set.
  % and let $\phi : \prod_{i=1}^\infty [0,\alpha_i] \to G$ be the function from the previous lemma applied to the sequence $\{u_i\}$. 
%   As $\phi$ has bounded image, we may assume via translations and dilations that $\phi$ has image contained in $U$. 
    Let $\mu$ be the measure on $G$ that is the pushforward via $\phi$ of the probability measure on $\prod_{i=1}^\infty [0,\alpha_i]$.  
    
%:
    On the one hand, Since $\phi$ has image contained in $U$, we have  $\mu(U) = 1$ and hence $\mu(\Omega) = 1$. 
    
    On the other hand, we shall show that  $\mu(\Omega)=0$. Since the set $\Omega$ is
     $(N_m)_m$-negligible, then $\Omega=\cup_{m\in\N}\Omega_m$ for some $ \Omega_m $ such that $\vol_{N_m}(N_m \cap g \Omega_m) = 0$ for all $g \in G$ and $m\in \N$;
and it is enough to show that $\mu(\Omega_m) = 0$.
For doing so, fix $ m \in \N $ and
let $\nu_1$ and $\nu_2$ denote the product probability measures on $C_1 = \prod_{i=1}^{k_m} [0,\alpha_i]$ and $C_2 = \prod_{i=k_m+1}^\infty [0,\alpha_i]$, respectively. 
First, notice that $(\phi_m)_\#(\nu_1)$ is a smooth measure on some open set of $N_m$ and hence it is absolutely continuous with respect to $\vol_{N_m}$.
Second, notice that for any ${\bf t}_2\in C_2$
% = (x_{k_m+1},x_{k_m+2},...)$,
 the image $\phi(C_1 \times \{{\bf t}_2\}) = \phi(\bar 0,{\bf t}_2)\phi_m(C_1)$ is a positive measure set in $\phi(\bar 0,{\bf t}_2) N_m$.  However, 
%  by Proposition~\ref{p:S-sequence}, we have that $\vol_{G_n}(G_n \cap gA) = 0$ for all $g \in G$, and so
   for any ${\bf t}_2\in C_2$ we get
  \begin{eqnarray*}
    \int_{C_1} \chi_{\phi^{-1}(\Omega_m)}({\bf t}_1,{\bf t}_2) ~d\nu_1({\bf t}_1)  
     &   = &\int_{C_1} \chi_{\Omega_m}(\phi(\bar 0,{\bf t}_2)\phi_m({\bf t}_1))~d\nu_1({\bf t}_1)\\
    &= &\int_{C_1} \chi_{\phi_m^{-1}(\phi(\bar 0,{\bf t}_2)^{-1}\Omega_m)}({\bf t}_1) ~d\nu_1({\bf t}_1)\\
      &  = &\nu_1({\phi_m^{-1}(\phi(\bar 0,{\bf t}_2)^{-1}\Omega_m)})\\
        &= & (\phi_m)_\#(\nu_1) ( N_m \cap\phi(\bar 0,{\bf t}_2)^{-1}\Omega_m)\\
 & \preceq & \vol_{N_m}( N_m \cap\phi(\bar 0,{\bf t}_2)^{-1}\Omega_m)=0
  .
  %  = &(\phi_n)_{\sharp}\nu_1(\phi(0,\ldots,0,\mathbf{x})A))   = 0.
  \end{eqnarray*}
  Thus, $\mu(\Omega_m) = \int_{C_2} \int_{C_1} \chi_{\phi^{-1}(\Omega_m)} ~d\nu_1 ~d\nu_2 = 0$.
%\end{proof}
\qed

\begin{remark}
%\todo[inline]{Say better}
	Note that the statement of Proposition~\ref{Prop:no:interior} makes sense for   scalable groups without any metrics.  Indeed, the notion of filtrations (and thus also negligibility) only relies on the topology.  Thus, it may be possible that the result is true for all scalable groups although we have not verified this.
\end{remark}

% \begin{proposition} \label{p:U:not:opens}
% Let $ G $ be a  metric  scalable group generated by a countable set  $\{a_n\}\subset V_1(G)\setminus \{ e_G \}$. Assume that the scalable group generated by $ \{a_1,\ldots, a_n \} $ is nilpotent for all $ n\in \N $. Then $\mathcal U (\{a_n\})$ does not contain open non-empty sets.
%\end{proposition}

The rest of this section is devoted to a closer study of filtration-negligible sets of metric scalable groups. Below we define an \emph{exceptional class} as in \cite{Aronszajn1976} and prove that it is equivalent to our notion of filtration negligible sets.
%%%%%%%%%%%%%%%%%%%%%%%%%%%%%%%% OLD section

\subsection{The exceptional class $\mathcal U$}
Let $G$ be a scalable group with identity element denoted by $e_G$ and $\cB(G)$ be the Borel sets of $G$. For every $a\in V_1(G)$, with $a\neq e_G$ set
$$\mathcal U (a) := \{   A \in \cB(G) :  \forall g\in G, |A  \cap  (g\cdot \R a)| =0   \},$$ 
where we denote by $\R a$ the image of the curve $t\in \R \mapsto \delta_t a$ and by $|\cdot|$ the 1-dimensional Lebesgue measure 
on the curve. In other words, 
$$|A  \cap  (g\cdot \R a)| = | \{ t\in \R :  g \delta_t a  \in A \} |.$$
 For every countable set $\{a_n\}\subset V_1(G)\setminus \{ e_G \}$ set
$$\mathcal U (\{a_n\}) := \{A \in \cB(G) :  A=\cup_{n\in\N} A_n, A_n \in \mathcal U (a_n)    \}.$$ 
Finally, set $\mathcal U$ to be the intersection of all $\mathcal U (\{a_n\})$ among all dense sequences $\{a_n\}\subseteq V_1(G)\setminus \{ e_G \}$.

The classes $\mathcal U (a), \mathcal U (\{a_n\}), \mathcal U $ are $\sigma$-additive, hereditary, and do not contain any open non-empty  set (see the theorem below). Moreover, we have the property:
$$\{a'_n\}\subseteq \{a_n\}\implies \mathcal U (\{a'_n\}) \subseteq \mathcal U (\{a_n\}) .   $$

%We will use the following notation: for $A\subseteq G$ denote by $[A]$ the closure of the group generated by $A$.

%If $S$ is a finite-dimensional Carnot subgroup of $G$ we set 
%$$\mathcal U (S) := \{   A\subseteq G :  \forall g\in G, |A  \cap  (g\cdot S)|_S =0   \},$$ 
%where we denote by   $|\cdot|_S$ any Haar measure on $S$.
\begin{thm}
	\label{t:exceptional:iff:negligible}
	Let $ G $ be a metric scalable group and let $\{a_n\}\subset V_1(G)\setminus \{ e_G \}$ be a dense sequence such that the group $ N_m $ generated by $ \{a_1,\ldots, a_m\} $ is nilpotent for all $ m \in \N $. Then a set $ \Omega \sus G $ is in the class $ \mathcal{U}(\{a_n\}) $ if and only if it is $ (N_m)_{m\in \N} $ -negligible.
\end{thm}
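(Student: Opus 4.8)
The plan is to prove the two inclusions separately, reducing each to a statement about the individual groups $N_m$, which are genuine Carnot groups by Proposition~\ref{p:isCarnotSec2}. Throughout I use that the classes $\mathcal U(a)$ and $\mathcal U(\{a_n\})$ are $\sigma$-additive and hereditary, and that each $a_n\in V_1(G)$, so that $t\mapsto\delta_t a_n$ is a one-parameter subgroup which, by Lemma~\ref{l:isometry}, is a homothetic embedding of $\R$.

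\textbf{The direction $\mathcal U(\{a_n\})\Rightarrow$ negligible.} Given $\Omega=\cup_n A_n$ with $A_n\in\mathcal U(a_n)$, I simply set $\Omega_m:=A_m$; then $\cup_m\Omega_m=\Omega$ and it remains to check $\vol_{N_m}(N_m\cap g\Omega_m)=0$ for all $g$. Since $a_m\in N_m$, I fix an adapted basis $X_1,\dots,X_{d-1},X_d=\log a_m$ of $\Lie(N_m)$ and use exponential coordinates of the second kind $\Psi(t_1,\dots,t_d)=\exp(t_1X_1)\cdots\exp(t_{d-1}X_{d-1})\,\delta_{t_d}(a_m)$, a global diffeomorphism $\R^d\to N_m$ carrying Lebesgue measure to a Haar measure. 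Writing $n=s\,\delta_{t_d}(a_m)$ with $s=s(t_1,\dots,t_{d-1})$ and applying Fubini, the inner integral over $t_d$ equals $|A_m\cap (g^{-1}s)\R a_m|$ up to the positive homothety constant of Lemma~\ref{l:isometry}; this vanishes for every $s$ because $A_m\in\mathcal U(a_m)$. Hence $\vol_{N_m}(N_m\cap gA_m)=0$. What makes this direction easy is that only one slicing direction is needed, and it can be placed as the rightmost factor, so the slices are genuine \emph{left} translates $h\R a_m$.

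\textbf{The direction negligible $\Rightarrow\mathcal U(\{a_n\})$.} Writing $\Omega=\cup_m\Omega_m$ and using $\sigma$-additivity, it suffices to show each $\Omega_m\in\mathcal U(\{a_n\})$. The hypothesis $\vol_{N_m}(N_m\cap g\Omega_m)=0$ for all $g$ says precisely that $\Omega_m$ meets every \emph{left coset} of $N_m$ in a Haar-null subset of that coset; since a first-layer line $h\R a_i$ with $i\le m$ lies in a single such coset, membership in $\mathcal U(a_i)$ is a coset-by-coset matter. Fixing a Borel section of $G/N_m$, the problem reduces to the following statement about the single Carnot group $N:=N_m$, which I prove by induction on its step $s$: \emph{every Haar-null Borel set $E\sus N$ is a countable union of Borel sets, each null on every left translate of some first-layer line $\R a_i$}. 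For $s=1$ the group is the vector space $V_1(N)$ with a basis drawn from $\{a_1,\dots,a_m\}$, and the claim is the elementary fact that a Lebesgue-null set in $\R^d$ is a finite union of sets meeting every line parallel to a fixed coordinate axis in a null set (proved by peeling off, for each axis, the null family of ``bad'' lines and pushing it into the next axis). For the inductive step let $Z=N^{(s)}$ be the central top layer and $\pi\colon N\to\bar N:=N/Z$ the projection, a Carnot group of step $s-1$ whose first layer is generated by the $\pi(a_i)$. Since $Z$ is central and $N$ is unimodular, $\vol_N$ disintegrates as $\vol_{\bar N}\otimes\vol_Z$, so by Fubini the set $B:=\{\bar g:\ E\cap\pi^{-1}(\bar g)\text{ is not }\vol_Z\text{-null}\}$ is $\vol_{\bar N}$-null. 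Split $E=E'\cup E''$ with $E''=E\cap\pi^{-1}(B)$ and $E'=E\setminus E''$. The part $E''$ is handled by the inductive hypothesis applied to $B\sus\bar N$: writing $B=\cup_j\bar B_j$ with $\bar B_j\in\mathcal U(\pi(a_{n_j}))$ and setting $E''_j:=E''\cap\pi^{-1}(\bar B_j)$, one checks $E''_j\in\mathcal U(a_{n_j})$ because $\pi(h\,\delta_t a_{n_j})=\pi(h)\,\delta_t\pi(a_{n_j})$ traces the $\bar N$-line $\pi(h)\R\pi(a_{n_j})$, which spends only null $t$-time inside $\bar B_j$ (here $\pi(a_{n_j})\ne e$ since $V_1\cap Z=\{0\}$ for $s\ge2$).

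\textbf{The main obstacle} is the remaining piece $E'$, which meets every central fibre $g\exp(\R X_0)$ in a $\vol_Z$-null set but must nevertheless be covered using only \emph{first-layer} (horizontal) line fields, while the fibres point in the \emph{top} layer. The difficulty is genuine: horizontal directions do not span the tangent space pointwise, so no naive Fubini in a fixed chart applies. The resolution must exploit that the horizontal distribution is bracket-generating, hence non-integrable: a first-layer line $h\R a$ projects injectively to $\bar N$ and its top-layer coordinate varies with the parameter, so it is transverse to the central fibres. I would therefore run an iterated ``flow-box'' Fubini: choose a first-layer direction $a_{i_1}$, straighten the foliation by its left translates into one coordinate, split off the null set of bad transversal points into a $\mathcal U(a_{i_1})$ piece plus a remainder supported over a lower-dimensional transversal set, and repeat with further horizontal directions $a_{i_2},a_{i_3},\dots$. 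The crux, and the step I expect to be hardest, is proving that this process terminates, i.e. that finitely many horizontal directions suffice to exhaust $E'$; this is exactly where the bracket-generating condition (the twisting of the transversals recorded by the commutators of the $\log a_i$) must be quantified, in the spirit of the ball-box estimates, to guarantee that the successive ``bad'' sets drop in dimension and eventually vanish. Finally, the cosetwise decompositions are reassembled into global Borel sets using the fixed Borel section of $G/N_m$ together with the global second-kind coordinates, which keep all the exceptional sets Borel.
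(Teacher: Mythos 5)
Your first direction ($\mathcal U(\{a_n\})\Rightarrow$ negligible) is correct and is essentially the paper's argument: the paper disposes of it in one line as ``clear by Fubini'' inside Proposition~\ref{p:S-sequence}, and your second-kind-coordinates computation with $a_m$ as the last factor is a valid way to make that Fubini explicit.

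The converse direction, however, contains a genuine gap, and it sits exactly at the heart of the theorem. Your quotient-by-the-center induction correctly disposes of the piece $E''$ lying over the bad base set $B$, but the residual piece $E'$ (null on every central fibre) is just as hard as the original problem, and for it you only describe a program: an iterated ``flow-box'' Fubini along horizontal directions whose termination you explicitly leave unproved, suggesting it ``must be quantified, in the spirit of the ball-box estimates.'' That unproved termination claim is precisely the paper's central lemma. The paper works directly on the Carnot group $N_m$ (no induction on step, no center quotient, no Borel section of $G/N_m$): for a null Borel set $A$ it defines the word sets $A_{wi}=\{p\in A_w : |A_w\cap(p\cdot\R X_i)|>0\}$ and proves that $A_{w^m}=\emptyset$, where $w=12\cdots n$ runs once through all horizontal directions and $w^m$ is its $m$-fold concatenation ($m=\dim N_m$). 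The proof is qualitative, not quantitative: starting from a hypothetical point of $A_{w^m}$ one builds analytic manifolds $H_1\subset H_2\subset\cdots$ of strictly increasing dimension carrying positive-measure traces of $A_{w^{m-k}}$, using that on any analytic manifold $M$ with $\dim M<m$ the tangency locus $\{p\in M: X_i(p)\in T_pM \text{ for all } i\}$ is an analytic submanifold of measure zero --- this is where bracket generation (non-integrability of the horizontal distribution) enters, replacing your proposed ball-box estimates --- so near almost every point some $X_i$ is transverse and flowing along it raises the dimension; after $m$ steps one contradicts $|A|=0$. Note also two secondary points: the repetition $w^m$ is essential in the nonabelian case (your base-case argument, where each coordinate direction is used once, works in $\R^d$ but not in a Carnot group, since $\dim V_1<\dim N_m$ and transversality of a given direction can fail along the way, forcing directions to be reused); and the Borelness of the peeled sets, which you use implicitly, requires an argument (the paper's Lemma~\ref{l:borel-decomp}). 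Finally, your reduction via a Borel section of $G/N_m$ is avoidable and slightly delicate (the theorem does not assume $G$ complete, so Polish-group selection theorems are not freely available); the paper sidesteps it by defining the word sets globally on $G$ and checking, for each $g\in A$, that $e\in H\cap g^{-1}A$ lands in one of the decomposition pieces of Proposition~\ref{t:null-decomp}, which forces $g\in C_i$ for some $i$.
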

Note that by Proposition \ref{p:isCarnotSec2}, each $ N_m $ in the theorem above is a Carnot group and the statement makes sense. The proof of the theorem will be a straightforward consequence of Proposition \ref{p:S-sequence}. The proof of Proposition \ref{p:S-sequence} needs some preparation, and we postpone it to the end of this section.
\begin{lemma}
  Let $A \subset G$ be a bounded Borel set and choose a $v \in V_1(G)$.  Then the function
  %\begin{align*}
  %  A' = \{x \in G : |(x \cdot \R v) \cap A| > 0\}
  %\end{align*}
  \begin{align*}
    f_A(x) = |A \cap (x \cdot \R v)|
  \end{align*}
  is Borel.
\end{lemma}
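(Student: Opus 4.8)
The plan is to write $f_A$ as a parametric integral and then prove Borel measurability first for open sets, where lower semicontinuity is available, and then bootstrap to all bounded Borel sets by a Dynkin-system argument.

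First I would record the reformulation. By the definition of the length measure on the curve $x\cdot\R v$, we have
\[
f_A(x) = |\{t\in\R : x\delta_t v \in A\}| = \int_\R \chi_A(\Phi(x,t))\,dt,
\]
where $\Phi\colon G\times\R\to G$, $\Phi(x,t):=x\delta_t v$, is continuous because the group multiplication and the dilation $\delta$ are continuous. Using Lemma~\ref{l:isometry}, if $A\subseteq B(p,R)$ then $x\delta_t v\in A$ forces $c|t| = d(x,x\delta_t v)\le d(x,p)+R$, with $c=d(e_G,\delta_1 v)>0$; hence the slice $\{t : \Phi(x,t)\in A\}$ is contained in a bounded interval and $f_A(x)<\infty$ for every $x$. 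This finiteness will be needed so that the difference step below makes sense.

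The main step is the case of an open bounded set $A$, for which I claim $f_A$ is lower semicontinuous, hence Borel. Suppose $x_n\to x$. For each fixed $t$, if $\Phi(x,t)\in A$ then, $A$ being open and $\Phi$ continuous, $\Phi(x_n,t)\in A$ for all large $n$; thus $\liminf_{n\to\infty}\chi_A(\Phi(x_n,t)) \ge \chi_A(\Phi(x,t))$ for every $t$. Since the integrands are nonnegative, Fatou's lemma yields
\[
\liminf_{n\to\infty} f_A(x_n) = \liminf_{n\to\infty}\int_\R \chi_A(\Phi(x_n,t))\,dt \ge \int_\R \liminf_{n\to\infty}\chi_A(\Phi(x_n,t))\,dt \ge f_A(x),
\]
which is exactly lower semicontinuity. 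I regard this as the heart of the argument.

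Finally I would upgrade to arbitrary bounded Borel sets by a $\pi$--$\lambda$ (Dynkin) argument. Fix a ball $W=B(e_G,R)$ and let $\mathcal D$ be the collection of Borel sets $A\subseteq W$ for which $f_A$ is Borel. The open subsets of $W$ form a $\pi$-system generating $\mathcal B(W)$, and they lie in $\mathcal D$ by the previous step. Moreover $\mathcal D$ is a $\lambda$-system: it contains $W$; if $A\subseteq B$ are in $\mathcal D$, then additivity of the length measure on each curve $x\cdot\R v$ gives $f_{B\setminus A}=f_B-f_A$, a difference of finite Borel functions, hence Borel; and if $A_k\uparrow A$ in $\mathcal D$ then $f_{A_k}\uparrow f_A$ by monotone convergence on each curve, so $f_A$ is Borel. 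By Dynkin's theorem $\mathcal D\supseteq \mathcal B(W)$, so $f_A$ is Borel for every Borel $A\subseteq W$. Since every bounded Borel set lies in some such ball $W$, the lemma follows. I would remark that one could instead try to deduce this from the Tonelli theorem applied to $F(x,t)=\chi_A(\Phi(x,t))$; the obstacle there is that Tonelli requires $\Phi^{-1}(A)$ to be measurable for the product $\sigma$-algebra $\mathcal B(G)\otimes\mathcal B(\R)$, which need not coincide with $\mathcal B(G\times\R)$ unless $G$ is separable. The lower-semicontinuity plus Dynkin route avoids this point and works for any metric scalable group.
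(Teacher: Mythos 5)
Your proof is correct, and while it shares the paper's overall skeleton (first the open case via lower semicontinuity, then a generation argument over the Borel sets of a fixed ball), both steps are implemented genuinely differently. For the open case, the paper proves that $f_A^{-1}((t,\infty))$ is open by a quantitative argument: it inner-approximates the slice $\{s : g\delta_s(v)\in A\}$ by a set $E'$ on which the distance to $A^c$ is bounded below by $\epsilon$, and then invokes the total-boundedness perturbation estimate of Lemma~\ref{l:compact-perturb} together with Lemma~\ref{l:isometry} to show nearby points $h$ still satisfy $f_A(h) > t$. Your Fatou argument --- pointwise $\liminf_n \chi_A(\Phi(x_n,t)) \ge \chi_A(\Phi(x,t))$ for every $t$ since $A$ is open and $\Phi$ continuous, then integrate --- reaches the same sequential lower semicontinuity in three lines and bypasses Lemma~\ref{l:compact-perturb} entirely; what the paper's more hands-on route buys is nothing essential here, as sequential lower semicontinuity suffices in a metric space. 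For the extension step, the paper asserts that the class $\mathcal A$ of good sets is a monotone class (increasing limits by monotone convergence, decreasing limits by dominated convergence, using boundedness for finiteness) and concludes it contains all Borel sets; strictly speaking the monotone class theorem requires an algebra, and the open subsets of $B(0,R)$ are not closed under complements, so as written this appeal has a small gap. Your $\pi$--$\lambda$ argument repairs exactly this point: open sets form a $\pi$-system, and you verify the $\lambda$-system axioms, with the proper-difference axiom $f_{B\setminus A}=f_B-f_A$ (legitimate thanks to the finiteness you establish from Lemma~\ref{l:isometry}) replacing the paper's decreasing-sequence step, so Dynkin's theorem applies verbatim. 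Your closing remark about Tonelli and product $\sigma$-algebras is apt and explains why neither you nor the paper takes that shortcut; the only shared implicit hypothesis, in both your argument and the paper's, is that $v\neq e_G$ so that $c=d(e_G,\delta_1 v)>0$, which is how the lemma is used later in defining $\mathcal U(a)$ for $a\neq e_G$.
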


\begin{proof}
  Let $R > 0$ be arbitrary and $\mathcal{A}$ denote the set of all $A \subseteq B(0,R)$ that satisfy the conclusion.  We will prove $\mathcal{A}$ contains the Borel sets of $B(0,R)$.  We first prove that the open sets in $B(0,R)$ are in $\mathcal{A}$.  Indeed, let $A$ be open and $t \in \R$.  We will show $A' = f_A^{-1}((t,\infty))$ is open.
  
  As $f_A$ is nonnegative, we may suppose without loss of generality that $t \geq 0$.  Let $g \in f_A^{-1}((t,\infty))$ and $\delta = f_A(g) - t > 0$.  Let $E = \{ s\in \R : g \delta_s(v) \in A\}$, which is a bounded set by boundedness of $A$ and Lemma~\ref{l:isometry}.  For each $s \in E$, define $d(s) = d(g \delta_s(v),A^c)$, a positive continuous function on $E$.  We can choose $\epsilon > 0$ small enough so that
  \begin{align*}
    E' = \{s \in \R : d(g \delta_s(v)) > \epsilon\},
  \end{align*}
  satisfies $|E'| > f_A(g) - \frac{\delta}{2}$.

  Note that $E'$ is totally bounded.  By Lemma~\ref{l:isometry}, $\{g \delta_s(v) : s \in E'\}$ is the isometric image of the totally bounded set $E'$ and so also totally bounded.  Thus by Lemma~\ref{l:compact-perturb}, there exists $\eta_0 > 0$ so that
  \begin{align*}
    \sup_{h \in B(0,\eta_0)} \sup_{s \in E'} d(hg\delta_s(v),g\delta_s(v)) < \epsilon.
  \end{align*}
  As $G$ is topological, we have that there exists some $\eta > 0$ small enough so that $B(g,\eta) \subseteq B(0,\eta_0)g$.  This then gives that
  \begin{align*}
    \sup_{h \in B(g,\eta)} \sup_{s \in E'} d(h\delta_s(v),g\delta_s(v)) < \epsilon.
  \end{align*}
  This shows that $h\delta_s(v) \in A$ when $h \in B(g,\eta)$ and $s \in E'$ and so
  $$f_A(h) \geq |E'| > f_A(g) - \frac{\delta}{2} > t,$$
  which proves $B(g,\eta) \subseteq f_A^{-1}((t,\infty))$ and so $f_A^{-1}((t,\infty))$ is open.
%  As right translation is homeomorphic, we get that there exists a ball $B(x,r) \subseteq A\delta_t(v)$ and so $B(x,r) \subseteq A'$, so $A'$ is open.  As $\mathcal{A}$ contains open sets, to show that the Borel sets are contained in $\mathcal{A}$, it suffices to show it is a monotone class ({\it i.e.} closed under countable unions and intersections).

  We now show that $\mathcal{A}$ is a monotone class of sets, which will prove that $\mathcal{A}$ contains all Borel sets.  Let $\{E_i\}$ be an increasing sequence in $\mathcal{A}$ and $E = \bigcup_i E_i$.  Then $E \cap (x \cdot \R v) = \bigcup_i (E_i \cap (x \cdot \R v))$, which is also an increasing family and so by monotone convergence theorem we get
  \begin{align*}
    f_E(x) = \lim_{i \to \infty} f_{E_i}(x).
  \end{align*}
  Thus, $f_E$, the increasing pointwise limit of $f_{E_i}$, must be Borel and so $E \in \mathcal{A}$.  Similarly, let $\{E_i\}$ be a decreasing sequence in $\mathcal{A}$ and let $E = \bigcap_i E_i$.  We have as before that $E \cap (x \cdot \R v) = \bigcap_i (E_i \cap (x \cdot \R v))$, another decreasing sequence.  As $E_1$ is bounded, $f_{E_1}(x) < \infty$ and so by dominated convergence theorem we get $f_E(x) = \lim_{i \to \infty} f_{E_i}(x)$.  Thus, as before, $E \in \mathcal{A}$, which proves the monotonicity property of $\mathcal{A}$.
\end{proof}

\begin{lemma} \label{l:borel-decomp}
  Let $A \subseteq G$ be any Borel set and $v \in V_1(G)$.  Then the set
  \begin{align*}
    \{g \in A : |A \cap (g \cdot \R v)| > 0\}
  \end{align*}
  is Borel.
\end{lemma}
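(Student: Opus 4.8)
The plan is to reduce the general statement to the bounded case that the previous lemma already settles. First I would recall that when $A$ is a \emph{bounded} Borel set, the function $f_A(x) = |A \cap (x\cdot\R v)|$ is Borel. In that case the set in question is simply
\[
\{g \in A : f_A(g) > 0\} = A \cap f_A^{-1}\big((0,\infty)\big),
\]
the intersection of the Borel set $A$ with the preimage of the open set $(0,\infty)$ under a Borel function, hence Borel. So the bounded case is immediate.

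For an arbitrary Borel set $A$, I would exhaust it by bounded pieces: write $A = \bigcup_{n\in\N} A_n$ with $A_n := A \cap B(0,n)$, an increasing sequence of bounded Borel sets. Along each fixed fiber $g\cdot\R v$ the intersections $A_n \cap (g\cdot\R v)$ increase to $A \cap (g\cdot\R v)$, so by continuity of the one-dimensional Lebesgue measure from below one gets $f_{A_n}(g) \uparrow f_A(g)$ for every $g$. Since a nondecreasing sequence of nonnegative numbers has positive limit if and only if some term is already positive, this yields
\[
\{g \in A : |A \cap (g\cdot\R v)| > 0\} = A \cap \bigcup_{n\in\N} f_{A_n}^{-1}\big((0,\infty)\big).
\]
Each $A_n$ is bounded, so each $f_{A_n}$ is Borel by the previous lemma, each $f_{A_n}^{-1}((0,\infty))$ is Borel, the countable union is Borel, and intersecting with the Borel set $A$ produces a Borel set, as desired.

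The only mildly delicate point is the monotone-convergence step: one must confirm that the fiberwise intersections really increase to the full fiber intersection (so that continuity from below applies pointwise in $g$) and that positivity of the limiting fiber measure is equivalent to positivity of some $f_{A_n}(g)$. Once this is in hand, everything reduces to the bounded lemma together with the standard closure properties of the Borel $\sigma$-algebra, so I do not expect any substantive obstacle beyond this bookkeeping.
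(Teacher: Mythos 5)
Your proposal is correct and is essentially the paper's own proof: the paper likewise sets $A_n = A \cap B(0,n)$, invokes monotone convergence to identify the set with $\bigcup_{n}\bigl(f_{A_n}^{-1}((0,\infty)) \cap A_n\bigr)$, and applies the preceding lemma to each bounded $A_n$. Your expression $A \cap \bigcup_n f_{A_n}^{-1}((0,\infty))$ is the same set, and the monotone-convergence bookkeeping you flag is exactly the step the paper uses.
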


\begin{proof}
  Let $A_n = A \cap B(0,n)$.  By monotone convergence theorem, the set in question is equal to $\bigcup_{n=0}^\infty \{g \in A_n : |A_n \cap (g \cdot \R v)| > 0\} = \bigcup_{n=0}^\infty (f^{-1}_{A_n}((0,\infty)) \cap A_n)$, which, by the previous lemma, is a countable union of Borel sets.
\end{proof}

\subsection{Null decomposition}

Let $G$ be a Carnot group with $\dim V_1 = n$ and suppose $G$ is homeomorphic to $\R^m$.  We let $X_1,...,X_n$ be the vector fields in $\R^m$ that are given by left translation of a basis in $V_1$.

\begin{remark}\label{remark:analytic}
  Let $M$ be an analytic manifold in $\R^m$.  Then it is clear that for every $i \in \{1,...,n\}$, the set of points for which $X_i(p) \in T_pM$ is a closed analytic submanifold of $M$.
\end{remark}

\begin{lemma}
  Let $M$ be an analytic manifold in $\R^m$ of dimension less than $m$.  Then for almost every $p \in M$, there exists an open neighborhood $U \ni p$ and an index $i \in \{1,...,n\}$ for which $X_i(q) \notin T_qM$ for all $q \in U$.
\end{lemma}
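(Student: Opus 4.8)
The plan is to identify the exceptional set explicitly, reduce the claim to showing that this set is null, and then derive a contradiction from the bracket-generating nature of $X_1,\dots,X_n$.

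First I would set, for each $i\in\{1,\dots,n\}$, the set $S_i:=\{p\in M: X_i(p)\in T_pM\}$, which by Remark~\ref{remark:analytic} is a closed analytic submanifold of $M$. I claim the conclusion of the lemma holds at $p$ exactly when $p\notin\bigcap_{i=1}^n S_i$. Indeed, if $X_i(p)\notin T_pM$ for some $i$, then since $S_i$ is closed in $M$, the set $U:=M\setminus S_i$ is an open neighborhood of $p$ on which $X_i(q)\notin T_qM$ for every $q$; conversely, any $p$ satisfying the conclusion has $X_i(p)\notin T_pM$ for the relevant index, whence $p\notin\bigcap_i S_i$. Thus it suffices to prove that the exceptional set $B:=\bigcap_{i=1}^n S_i=\{p\in M: X_i(p)\in T_pM \text{ for all } i\}$ has measure zero in $M$ (with respect to its natural $k$-dimensional volume, where $k=\dim M<m$).

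Next I would argue by contradiction, assuming $B$ has positive measure. Working in a local analytic chart where $M$ is cut out by an analytic submersion $f=(f_1,\dots,f_{m-k})$, the condition $X_i(q)\in T_qM$ is equivalent to the vanishing on $M$ of the real-analytic functions $q\mapsto X_i f_j(q)$; these are analytic because the left-invariant fields $X_i$ have polynomial coefficients in exponential coordinates on the Carnot group $G$. Hence, in such a chart, $B$ is the common zero set on $M$ of finitely many real-analytic functions. If $B$ meets some connected chart neighborhood $V\subseteq M$ in positive measure, then by the identity theorem for real-analytic functions (the zero set of a nontrivial real-analytic function on a connected manifold has measure zero) each $X_i f_j$ vanishes identically on $V$. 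Therefore $B$ contains the nonempty open subset $V\subseteq M$, on which every $X_i$ is everywhere tangent to $M$.

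Finally I would invoke bracket generation. Since the vector fields everywhere tangent to $M$ form a Lie subalgebra (if $X,Y$ are tangent to $M$ then so is $[X,Y]$, because $[X,Y]f_j=X(Yf_j)-Y(Xf_j)$ vanishes on $M$), all iterated Lie brackets of $X_1,\dots,X_n$ are tangent to $M$ along $V$. But the $X_i$ are the left-invariant fields of a basis of the first layer of the stratified Lie algebra $\Lie(G)$, so by the stratification their iterated brackets span all of $\Lie(G)$, and evaluated at any point $q$ these values span $T_qG=\R^m$. This forces $T_qM=\R^m$ for $q\in V$, contradicting $\dim M<m$. Hence $B$ is null and the lemma follows. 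The step I expect to be the main obstacle is the passage from positive measure to containing an open subset of $M$: this is where real-analyticity is genuinely used, and it should be phrased via the identity theorem in local analytic parametrizations of $M$ (reducing to the one-function statement), rather than through a more delicate general fact about possibly singular analytic varieties.
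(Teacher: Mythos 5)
Your proof is correct and takes essentially the same route as the paper's: both define the tangency loci $S_i=\{p\in M: X_i(p)\in T_pM\}$ via Remark~\ref{remark:analytic}, reduce the claim to showing $\bigcap_{i=1}^n S_i$ is null in $M$, and obtain that from the bracket-generating property of the horizontal fields, with the open neighborhood then coming for free from closedness of the $S_i$. The only difference is one of detail: the paper compresses your identity-theorem and involutivity/bracket-generation steps into the single assertion that $\bigcap_i S_i$ has measure zero ``by non-integrability of $M$ (as $\dim M < m$)'', so your write-up just makes explicit what the paper leaves implicit.
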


\begin{proof}
  For each $i \in \{1,...,n\}$, let $A_i = \{p \in M : X_i(p) \in T_pM\}$.  By Remark~\ref{remark:analytic}, we have that $A_i$ are closed analytic submanifolds and $\bigcap_i A_i$ is a closed submanifold where $X_i(p) \in T_pM$ for all $i$.  By non-integrability of $M$ (as $\dim M < m$), we have that $\bigcap_i A_i$ has measure zero.  In particular, there exists $i$ and some $p \in A_i^c$.  As $A_i^c$ is open, there exists an open neighborhood $p \in U \subseteq A_i^c$.  This neighborhood satisfies the conclusion of the lemma with $X_i$.
\end{proof}

Given a set $A \subseteq \R^m$ and $i \in \{1,...,n\}$ we define
\begin{align*}
  A_i := \{p \in A : |A \cap (p \cdot \R X_i)| > 0\}.
\end{align*}
By $p \cdot \R X_i$, we mean the 1-dimensional $\R$-flow of the vector field $X_i$ that passes through $p \in \R^m$.  Note that this is an analytic submanifold.

\begin{comment}
\begin{lemma}
  If $A \subseteq \R^m$ is a Borel, then $A_i$ is Borel for every $i$.
\end{lemma}

\begin{proof}
  Choose exponential coordinates of the second kind $\phi : \R^m \to \R^m$ so that the last coordinate corresponds to the flow along $X_i$.  As $\phi$ is a diffeomorphism, a set $E \subseteq \R^m$ is Borel if and only if $\phi^{-1}(E)$ is Borel.

  Note that $\phi$ when viewed as a function restricted to any line of the form $(x_1,...,x_{m-1},\R)$ preserves null sets.  Thus, by construction of $\phi$, we can express
  \begin{align*}
    A_i = \phi(\{(x_1,...,x_m) \in \phi^{-1}(A) : |(x_1,...,x_{m-1},\R) \cap \phi^{-1}(A)| > 0\}).
  \end{align*}
  Consider the set 
  \begin{align*}
    A_i' = \{(x_1,...,x_{m-1}) \in \R^{m-1} : |(x_1,...,x_{m-1},\R) \cap \phi^{-1}(A)| > 0\}.
  \end{align*}
  As $\phi^{-1}(A)$ is Borel, one gets that $A_i'$ is Borel.  One now has that $A_i = \phi((A_i' \times \R) \cap \phi^{-1}(A))$, which is thus Borel.
\end{proof}
\end{comment}

Given a word $w$ written in the alphabet $\{1,...,n\}$, we define $A_w = (A_{w'})_i$ where $w = w'i$ and $A_\emptyset = A$.  Note that $A_w \subseteq A_{w'}$.  Let $w$ denote the word $123\cdots n$, the concatenation of all the letters.  Define the word $w^k$ to be the $k$-fold concatenation of $w$ (so $w^k$ is $kn$ letters long).

\begin{lemma}
  If $A \subset \R^m$ is a measure zero set, then $A_{w^m} = \emptyset$.
\end{lemma}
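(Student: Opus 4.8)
The plan is to prove the contrapositive: if $A_{w^m}\neq\emptyset$, then $\mathcal{L}^m(A)>0$. Write $w^m=c_1c_2\cdots c_L$ with $L=mn$, and set $S_0=A$ and $S_k=(S_{k-1})_{c_k}$, so that $S_L=A_{w^m}$ and $S_L\subseteq S_{L-1}\subseteq\cdots\subseteq S_0=A$. Fix a point $p\in S_L$ and consider the real-analytic horizontal flow-composition map $\Psi\colon\R^L\to G\cong\R^m$,
\[
\Psi(t_1,\dots,t_L)=p\cdot\exp(t_LX_{c_L})\exp(t_{L-1}X_{c_{L-1}})\cdots\exp(t_1X_{c_1}).
\]
Unwinding the definition of $(\cdot)_i$, membership $p\in S_L=(S_{L-1})_{c_L}$ means exactly that a positive-measure set of values $t_L$ keeps $p\cdot\exp(t_LX_{c_L})$ inside $S_{L-1}$, and inductively each admissible partial product lands in the next smaller set $S_\bullet$ for a positive-measure set of the next parameter.

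First I would produce a Borel set $V\subseteq\R^L$ with $\mathcal{L}^L(V)>0$ and $\Psi(V)\subseteq A$. Let $V$ be the set of parameters for which every partial product $q_k:=p\cdot\exp(t_LX_{c_L})\cdots\exp(t_{L-k+1}X_{c_{L-k+1}})$ lies in $S_{L-k}$; the terminal product $q_L=\Psi(t_1,\dots,t_L)$ then lies in $S_0=A$. Measurability of each condition, and hence Borel-ness of $V$, follows from the Borel-ness of the ``positive line-measure'' functions as in Lemma~\ref{l:borel-decomp}. Applying Tonelli's theorem to the indicator of $V$, integrating from the innermost variable $t_1$ outward, each conditional integral is the length of a positive-measure set of admissible continuations and is therefore strictly positive; hence $\mathcal{L}^L(V)>0$.

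The crux is then to show $\mathcal{L}^m(\Psi(V))>0$. Since $V_1(G)$ generates $\Lie(G)$, the horizontal fields $X_1,\dots,X_n$ are bracket-generating, and $\Psi$ runs through all of them $m$ times. By the Chow--Rashevskii theorem (a sufficiently long concatenation through a bracket-generating family has endpoint map with image of nonempty interior), $\Psi$ cannot be everywhere of rank $<m$, as otherwise Sard's theorem would force $\operatorname{im}\Psi$ to be $\mathcal{L}^m$-null. Thus $\Psi$ is a submersion at some parameter, and, being real-analytic, its critical set $\{\operatorname{rank}d\Psi<m\}$ is a proper analytic subvariety and hence $\mathcal{L}^L$-null. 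Therefore $V$ minus the critical points still has positive measure; choosing a density point of this set that is a regular point of $\Psi$, in suitable local coordinates $\Psi$ becomes a coordinate projection, which maps positive-measure sets to positive-measure sets. Consequently $\Psi(V)\subseteq A$ has positive $\mathcal{L}^m$-measure, contradicting $\mathcal{L}^m(A)=0$.

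The main obstacle is exactly this full-rank step: one must be sure that $m$ full cycles through the generators suffice for the iterated horizontal flow map to be a submersion at a generic parameter. This is precisely where non-integrability of the horizontal distribution is indispensable; equivalently, the transversality lemma above, forbidding a proper analytic submanifold from being tangent to all the $X_i$ on a set of full measure, rules out the reachable set from $p$ being trapped inside a lower-dimensional manifold. It is also the reason the word must run through all $n$ generators and be iterated $m=\dim G$ times rather than fewer. The remaining ingredients---unwinding the operations $(\cdot)_{c_k}$, the measurability needed to run Tonelli, and the preservation of positive measure under submersions---are routine bookkeeping.
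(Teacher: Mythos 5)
Your reduction to the parameter space (the Tonelli argument producing a Borel set $V\subseteq\R^L$ of positive $\mathcal{L}^L$-measure with $\Psi(V)\subseteq A$, plus the density-point/submersion step, plus the Fubini fact that a positive-measure set cannot project to a null set) is sound bookkeeping. But there is a genuine gap exactly at the step you yourself flag as the crux, and the citation of Chow--Rashevskii does not close it. Chow--Rashevskii (or the orbit theorem) guarantees that \emph{arbitrary} finite concatenations of horizontal flows reach an open set --- i.e.\ that \emph{some} word, of uncontrolled length and letter order, has an endpoint map attaining rank $m$. It says nothing about the \emph{fixed} word $w^m=(12\cdots n)^m$ of length $mn$ with its prescribed cyclic order: the image of your specific $\Psi$ is in general a proper subset of the Chow-reachable set, so ``the reachable set has nonempty interior'' does not imply ``$\Psi$ is not everywhere of rank $<m$.'' As written, the sentence ``$\Psi$ cannot be everywhere of rank $<m$'' is an assertion, not a deduction, and everything downstream (properness of the analytic critical variety, Sard, the regular density point) depends on it.

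What actually proves the full-rank claim for this particular word is an inductive rank-growth argument, and it is precisely the content of the paper's proof. The paper builds a flag of analytic manifolds $H_1\subset H_2\subset\cdots\subset H_m$ with $|H_k\cap A_{w^{m-k}}|>0$: at a density point of $A_{w^{m-k}}\cap H_k$ it invokes the preceding lemma (for a.e.\ point of an analytic manifold of dimension $<m$ there are a neighborhood $U$ and an index $i$ with $X_i(q)\notin T_qH_k$ for all $q\in U$ --- this is where non-integrability enters), and then flows $U$ out along $X_i$ to get $H_{k+1}$ of dimension $k+1$ still meeting the next set in positive measure. The key structural point, which your sketch never uses, is that each cycle of $w$ contains \emph{every} letter, so $A_{w^{m-k}}\subseteq(A_{w^{m-k-1}})_{1\cdots i}$ for whichever transverse index $i$ the lemma hands you; in your language, one may zero out the unused parameters of the next cycle and gain one transverse direction per cycle, so the maximal rank of the partial endpoint maps increases by at least one per cycle and reaches $m$ after $m$ cycles. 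If you carry out this induction (local images at maximal-rank points are analytic manifolds; if all $X_i$ were tangent on a neighborhood, their brackets would be too, contradicting bracket generation when the rank is $<m$), your parameter-space formulation becomes a correct, essentially equivalent rewriting of the paper's argument; without it, the proof is incomplete at its central step.
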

\begin{proof}
  Suppose otherwise.  There then exists a point $p \in A_{w^m} = (A_{w'})_n$ and so
  $$| A_{w'} \cap (p \cdot \R X_n)| > 0.$$
  We let $H_1$ denote the analytic manifold $p \cdot \R X_n$, which has dimension 1.  As $A_{w'} \subseteq A_{w^{m-1}}$, we get that $|A_{w^{m-1}} \cap H_1| > 0$.

  Now suppose we have a $k$-dimensional analytic manifold $H_k$ that intersects $A_{w^{m-k}}$ in a positive measure set (based on the surface area of $H_k$).  Thus, we can find a density point of $A_{w^{m-k}} \cap H_k$ satisfying the previous lemma, {i.e.}, there exists a density point $p$ of $A_{w^{m-k}} \cap H_k$, an open neighborhood $U \subseteq H_k$ of $ p $, and an index $i \in \{1,...,n\}$ so that $X_i \notin T_qH_k$ for all $q \in U$.
  
  As  $ A_{w^{m-k}} \sus (A_{w^{m-k-1}})_{1 \cdots (i+1)} $, by definition of the set $ (A_{w^{m-k-1}})_{1 \cdots (i+1)}  $ we have for any  $q \in A_{w^{m-k}} \cap H_k$ that
  \[
  |(A_{w^{m-k-1}})_{1 \cdots i} \cap (q \cdot \R X_i)| > 0.
  \]
  Since $(A_{w^{m-k-1}})_{1 \cdots i} \subseteq A_{w^{m-k-1}}$, we get for all $q \in A_{w^{m-k}} \cap H_k$ that
  \begin{align*}
    |A_{w^{m-k-1}} \cap (q \cdot \R X_i)| \geq   |(A_{w^{m-k-1}})_{1 \cdots i} \cap (q \cdot \R X_i)| > 0.
  \end{align*}
 Let $H_{k+1} = \bigcup_{q \in U} (q \cdot \R X_i)$.  As $X_i(q) \notin T_pU$, we get that $H_{k+1}$ is a $k+1$-dimensional analytic manifold and $|H_{k+1} \cap A_{w^{m-k-1}}| > 0$.

  We repeat until we get an $m$-dimensional analytic manifold for which $|H_m \cap A_\emptyset| = |H_m \cap A| > 0$.  But as $H_m$ has the same dimension as $\R^m$, this means $|A| > 0$, contradicting our assumption.
\end{proof}

\begin{proposition}\label{t:null-decomp}
  Let $A \subset \R^m$ have measure 0.  Then there exists a decomposition $A = C_1 \cup ... \cup C_n$ into Borel sets
  $$C_i = \bigcup_{k=0}^{m-1} (A_{w^k1\cdots (i-1)} \backslash A_{w^k1\cdots i})$$
  so that for each $i \in \{1,...,n\}$ we have
  \begin{align*}
    |C_i \cap (x \cdot \R X_i)| = 0, \qquad \forall x \in \R^m.
  \end{align*}
\end{proposition}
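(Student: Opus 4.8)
The plan is to realize $C_1,\dots,C_n$ as the pieces of a telescoping decomposition of $A$ coming from the descending chain of the sets $A_w$, and then to verify the null property \emph{one flow line at a time}, reducing everything to the two-valued behaviour of $|B\cap(p\cdot\R X_i)|$ along a fixed integral curve. First I would record Borelness: the integral curve of $X_i$ through the identity is $t\mapsto\delta_t(a_i)$ for a unique $a_i\in V_1(G)$, so that $p\cdot\R X_i=p\cdot\R a_i$. Hence, by Lemma~\ref{l:borel-decomp} (applied with $v=a_i$), the operation $B\mapsto B_i$ carries Borel sets to Borel sets. Starting from the Borel set $A=A_\emptyset$ and iterating along the letters of $w$, every $A_w$ is Borel; consequently each difference $A_{w^k1\cdots(i-1)}\setminus A_{w^k1\cdots i}$ is Borel, and so is each $C_i$.

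\textbf{The covering.}
For fixed $k$ the sets form a descending chain
$$A_{w^k}=A_{w^k1\cdots 0}\sus A_{w^k1}\sus\cdots\sus A_{w^k1\cdots n}=A_{w^{k+1}},$$
written with inclusions reversed, so that $\bigcup_{i=1}^n\bigl(A_{w^k1\cdots(i-1)}\setminus A_{w^k1\cdots i}\bigr)=A_{w^k}\setminus A_{w^{k+1}}$, the $n$ pieces being pairwise disjoint. Summing over $k$ telescopes to $A_{w^0}\setminus A_{w^m}=A\setminus A_{w^m}$, and by the preceding lemma $A_{w^m}=\emptyset$ since $A$ is null. Therefore $\bigcup_i C_i=A$; moreover, since the $A_{w^k}\setminus A_{w^{k+1}}$ are themselves disjoint across $k$, the $C_i$ are in fact pairwise disjoint.

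\textbf{The null property.}
It remains to show $|C_i\cap(x\cdot\R X_i)|=0$ for all $x$. As $C_i$ is a finite union over $k$, it suffices to treat a single piece $D:=A_{w^k1\cdots(i-1)}\setminus A_{w^k1\cdots i}$. Writing $B:=A_{w^k1\cdots(i-1)}$, by definition $A_{w^k1\cdots i}=B_i=\{p\in B:|B\cap(p\cdot\R X_i)|>0\}$, whence $D=\{p\in B:|B\cap(p\cdot\R X_i)|=0\}$. Fix a flow line $\ell=x\cdot\R X_i$. The crucial observation is that $|B\cap(p\cdot\R X_i)|$ depends only on the flow line through $p$, not on $p$ itself: flowing from a different base point $\phi^s(p)$ merely translates the time variable by $-s$, which preserves one-dimensional measure; for $p\in\ell$ this common value is exactly $|B\cap\ell|$. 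This yields a dichotomy. If $|B\cap\ell|>0$, then every point of $B\cap\ell$ lies in $B_i$, so $D\cap\ell=(B\setminus B_i)\cap\ell=\emptyset$. If $|B\cap\ell|=0$, then $D\cap\ell\sus B\cap\ell$ is null. Either way $|D\cap\ell|=0$, and summing over the finitely many $k$ gives $|C_i\cap\ell|=0$.

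\textbf{Main obstacle.}
Everything above is bookkeeping with the chain except the dichotomy, which is the one genuinely conceptual point. I expect the step to defend to be the invariance of $|B\cap(p\cdot\R X_i)|$ along an integral curve: this is precisely what upgrades the \emph{pointwise} definition of $B_i$ into a statement about whole flow lines, and it is what forces the difference set $D$ to meet every flow line in a null set — the positive-measure lines being exactly the ones absorbed into $B_i$ and thereby removed.
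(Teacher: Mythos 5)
Your proof is correct and takes essentially the same route as the paper's: your pieces $A_{w^k1\cdots(i-1)}\setminus A_{w^k1\cdots i}$ are exactly the paper's sets $B_{w^k1\cdots i}$, and your telescoping union is just a rephrasing of the paper's inductive decomposition $A = B_1 \cup B_{12} \cup \dots \cup A_{w^m}$ terminating with $A_{w^m}=\emptyset$. Your flow-line dichotomy (the value $|B\cap(p\cdot\R X_i)|$ depends only on the integral curve through $p$) is precisely the justification the paper leaves implicit when it asserts $|B_{w'i}\cap(p\cdot\R X_i)|=0$ for all $p$, so spelling it out is a welcome but not divergent addition.
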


\begin{proof}
  Let $B_1 = \{p \in A_\emptyset : |A_\emptyset \cap (p \cdot \R X_1)| = 0\}$.  Then $A = B_1 \cup A_1$ where $A_1$ and $B_1$ are both Borel by Lemma~\ref{l:borel-decomp}, and we get that
  \begin{align*}
    |B_1 \cap (p \cdot \R X_1)| = 0, \qquad \forall p \in \R^m.
  \end{align*}
  By induction, we get a Borel decomposition
  $$A = B_1 \cup B_{12} \cup B_{123} \cup ... \cup B_{w^{m-1} 1 \cdots (n-1)} \cup A_{w^m} = B_1 \cup ... \cup B_{w^{m-1} 1 \cdots (n-1)}.$$
  Note that we have for every $B_{w'i}$ that
  \begin{align*}
    | B_{w'i} \cap (p \cdot \R X_i)| = 0, \qquad \forall p \in \R^m.
  \end{align*}
  We take $C_i = \bigcup_{k=0}^{m-1} B_{w^k 1 \cdots i}$ to finish the proof of the proposition.
\end{proof}

\begin{proposition} \label{p:S-sequence}
  Let $H$ be a subgroup of $G$ with a Carnot structure generated by $a_1,...,a_m \in V_1(G)$ and $A \subset G$ be Borel.  Then $\vol_H(H \cap gA) = 0$ for all $g \in G$ if and only if $A \in \mathcal{U}(\{a_1,...,a_m\})$.
\end{proposition}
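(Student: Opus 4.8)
The plan rests on one geometric observation: since each $a_i$ lies in $V_1(G)\cap H$ and $H$ is a scalable subgroup, the curve $\R a_i=\{\delta_t a_i : t\in\R\}$ is contained in $H$, so for every $g'\in G$ the translated curve $g'\cdot\R a_i$ lies entirely in the single left coset $g'H$. Consequently both sides of the asserted equivalence are \emph{coset-local}. Writing $\vol_{gH}:=(L_g)_*\vol_H$ for the pushforward of Haar measure to the coset $gH$, the identity $g^{-1}(H\cap gA)=g^{-1}H\cap A$ gives
\[
\vol_H(H\cap gA)=\vol_{g^{-1}H}\bigl(A\cap g^{-1}H\bigr),
\]
so ``$\vol_H(H\cap gA)=0$ for all $g\in G$'' is equivalent to ``$\vol_{gH}(A\cap gH)=0$ for every coset $gH$''. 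Likewise, $A\in\mathcal U(a_i)$ says exactly that, in every coset $gH$, the slice $A\cap gH$ meets each flow line $g'\cdot\R a_i$ ($g'\in gH$) in a $1$-dimensional null set. Via the homeomorphism $L_g$ each coset is a faithful copy of the Carnot group $H\cong\R^M$ (with $M=\dim H$) carrying the flows of the left-invariant fields $X_i$ generated by $\log a_i$, so it suffices to prove both implications inside a single coset, i.e. inside $H$ itself.

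For the implication $A\in\mathcal U(\{a_1,\dots,a_m\})\Rightarrow\vol_H(H\cap gA)=0$, write $A=A_1\cup\cdots\cup A_m$ with $A_i\in\mathcal U(a_i)$ and fix a coset. By the previous paragraph each $A_i\cap gH$ meets every flow line of $X_i$ in a null set. Choosing exponential coordinates of the second kind on $H$ whose last coordinate runs along $X_i$, where Lemma~\ref{l:isometry} guarantees the parametrization matches arclength and Haar measure is Lebesgue measure up to a constant, Fubini's theorem yields $\vol_{gH}(A_i\cap gH)=0$; summing over $i$ gives $\vol_{gH}(A\cap gH)=0$, and this for every coset is the conclusion.

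For the converse, assume $\vol_{gH}(A\cap gH)=0$ for every coset. Since $\{\log a_1,\dots,\log a_m\}$ spans $V_1(H)$, after reindexing $\log a_1,\dots,\log a_n$ form a basis; take these as the fields $X_1,\dots,X_n$ of the null decomposition, so that the flow lines of $X_i$ are the curves $g'\cdot\R a_i$. Working on all of $G$, define the operation $A\mapsto A_i=\{g\in A:|A\cap(g\cdot\R a_i)|>0\}$, which is Borel by Lemma~\ref{l:borel-decomp}; because each curve $g\cdot\R a_i$ stays in one coset, this operation commutes with slicing, i.e. $A_i\cap gH=(A\cap gH)_i$, and iterating gives $A_{w'}\cap gH=(A\cap gH)_{w'}$ for every word $w'$. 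Applying Proposition~\ref{t:null-decomp} inside each coset, where $A\cap gH$ is null, yields $(A\cap gH)_{w^M}=\emptyset$, hence $A_{w^M}=\emptyset$ globally, and the sets $C_i=\bigcup_{k=0}^{M-1}(A_{w^k1\cdots(i-1)}\setminus A_{w^k1\cdots i})$, assembled from the \emph{global} Borel operations, are Borel and exhaust $A$. Their defining null property $|C_i\cap(g'\cdot\R a_i)|=0$ holds in each coset and therefore for all $g'\in G$, so $C_i\in\mathcal U(a_i)$ and $A=C_1\cup\cdots\cup C_n\in\mathcal U(\{a_1,\dots,a_n\})\subseteq\mathcal U(\{a_1,\dots,a_m\})$.

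The main obstacle is exactly the bookkeeping in the converse: Proposition~\ref{t:null-decomp} is stated for one fixed copy of $\R^M$, and it must be upgraded to a \emph{simultaneously Borel} decomposition across the uncountable family of cosets of $H$. What makes this work, rather than forcing a measurable-selection argument, is that the operation $A\mapsto A_i$ is already Borel on all of $G$ by Lemma~\ref{l:borel-decomp} and commutes with slicing by cosets because the curves $g\cdot\R a_i$ never leave a coset; the vanishing $A_{w^M}=\emptyset$ then propagates from each coset to all of $G$ at once, and the explicit $C_i$ inherit the required null property globally.
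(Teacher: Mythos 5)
Your proof is correct and follows essentially the same route as the paper's: Fubini for the easy direction, and for the converse the same global Borel slicing operations $A\mapsto A_{wi}$ (via Lemma~\ref{l:borel-decomp}), the null decomposition of Proposition~\ref{t:null-decomp} applied inside $H$-cosets, and the identical sets $C_i$. The only difference is expository: you make explicit the coset-locality identity $A_{w'}\cap gH=(A\cap gH)_{w'}$, which the paper's pointwise argument (tracking $e_H\in H\cap g^{-1}A$ and using the inclusion $(H\cap g^{-1}A)_{w'}\subseteq g^{-1}A_{w'}$ together with its reverse on $H$) uses implicitly.
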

\begin{proof}
  The backwards direction is clear by Fubini. We will prove the forwards direction. Let $H$ be homeomorphic to $\R^m$.  %via induction on $m$.  The $m = 1$ case is immediate.  Suppose we've proven the statement for all finite-dimensional subgroups generated by $m-1$ elements of $V_1(G)$.  Let $A \subset G$ be a set so that $\vol_H(H \cap gA) = 0$ for all $g \in G$ where $H$ is generated by $v_1,...,v_m$.
  We will reuse the notation of the previous section where for each Borel set $E \subset G$ and word $w$, we define $E_{wi} = \{g \in E_w : |E_w \cap (g \cdot \R a_i)| > 0\}$. Lemma~\ref{l:borel-decomp} gives that these are Borel sets whenever $E$ is.  By construction, $E_{wi} \in \mathcal{U}(a_i)$. 
  
  We claim that $A = C_1 \cup ... \cup C_m$ where $C_i = \bigcup_{k=0}^{m-1} \left( A_{w^k1 \cdots (i-1)} \backslash A_{w^k1 \cdots i}\right)$, which proves the proposition.  Indeed, for any $g \in A$, we have by assumption that $\vol_H(H \cap g^{-1}A) = 0$.  As $e_H \in H \cap g^{-1}A$, by Theorem~\ref{t:null-decomp}, we have that there exists some $i$ so that
  $$e_H \in \bigcup_{k=0}^{m-1} \left( (H \cap g^{-1}A)_{w^k1 \cdots (i-1)} \backslash (H \cap g^{-1}A)_{w^k1 \cdots i}\right) \subseteq \bigcup_{k=0}^{m-1} \left( g^{-1}A_{w^k1 \cdots (i-1)} \backslash g^{-1}A_{w^k1 \cdots i}\right).$$
  This means that $g \in C_i$.
\end{proof}

\section{Differentiability of Lipschitz maps}
Suppose that $f \colon G \to H$ is a Lipschitz map between metric scalable groups. We remark that if $Df_g$ exists for some $ g\in G $, then it is Lipschitz as a function from $G $ to $ H$, with the same Lipschitz constant as $ f $.
\begin{lemma}
  Let $f : G \to \R$ be Lipschitz and $(N_m)_{m\in \N}$ be a filtration of $G$ by Carnot subgroups.  Then there exists a  $(N_m)_m$-negligible set $\Omega \subset G$ so that if $p \notin \Omega$ then for every $N_m$, the limit $\lim_{\lambda \to 0} \hat{f}_{p,\lambda}(u) $ exists for all $ u\in N_m $ and the resulting map on $ N_m $ is a homomorphism.
\end{lemma}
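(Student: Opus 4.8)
The plan is to fix $m$, reduce the assertion about $N_m$ at a point $p$ to the Pansu differentiability at the identity of a single left-translate of $f$, and then invoke Pansu's a.e.\ differentiability theorem \cite{Pansu} on the finite-dimensional Carnot group $N_m$, sliced over cosets, exactly in the shape demanded by the definition of negligibility. First I would fix $m$ and introduce, for each $p\in G$, the restriction $f_p\colon N_m\to\R$ given by $f_p(u):=f(pu)$. Since $d$ is left-invariant, $f_p$ is Lipschitz on $N_m$ with the same constant as $f$, and for every $u\in N_m$ the definition of $\hat f_{p,\lambda}$ unwinds (using that $\R$ carries the dilations $t\cdot$) to
\[
\hat f_{p,\lambda}(u)=\tfrac1\lambda\big(f_p(\delta_\lambda u)-f_p(e_G)\big),
\]
which is precisely the Pansu difference quotient of $f_p$ at $e_G$. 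Hence the conclusion of the lemma, relative to $N_m$ and at the point $p$, is exactly that $f_p$ be Pansu differentiable at $e_G$. I therefore set
\[
B_m:=\{\,p\in G : f_p\text{ is not Pansu differentiable at }e_G\,\},
\]
and I propose $\Omega:=\bigcup_{m}B_m$ with the decomposition $\Omega_m:=B_m$.

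The key step is the slicing estimate $\vol_{N_m}(N_m\cap gB_m)=0$ for every $g\in G$. Here I would use that, by the hypothesis that $(N_m)_m$ is a filtration by Carnot subgroups, $N_m$ is a finite-dimensional Carnot group whose dilations are $\delta$; the restriction of $d$ is a homogeneous left-invariant admissible metric, hence biLipschitz to its Carnot--Carath\'eodory distance, so Pansu's theorem \cite{Pansu} applies and every Lipschitz map $N_m\to\R$ is Pansu differentiable $\vol_{N_m}$-a.e. Now fix $g$ and write a generic point of $N_m\cap gB_m$ as $w\in N_m$ with $g^{-1}w\in B_m$. Because $u,w\in N_m\Rightarrow wu\in N_m$, a one-line computation gives $f_{g^{-1}w}=f_{g^{-1}}\circ L_w$ on $N_m$, where $f_{g^{-1}}(x):=f(g^{-1}x)$ is Lipschitz on $N_m$; consequently $g^{-1}w\in B_m$ if and only if $f_{g^{-1}}$ fails to be Pansu differentiable at $w$. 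By Pansu's theorem the set of such $w$ is $\vol_{N_m}$-null, which is exactly the claimed estimate. Granting Borel measurability of $B_m$, this shows that $\Omega=\bigcup_m B_m$ is $(N_m)_m$-negligible; and for $p\notin\Omega$ we have $p\notin B_m$ for every $m$, i.e.\ $\lim_{\lambda\to0}\hat f_{p,\lambda}(u)$ exists for all $u\in N_m$ and is a homomorphism, as required.

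The step I expect to be the main obstacle is the Borel measurability of $B_m$, which is needed so that $\Omega_m=B_m$ is an admissible piece in the definition of negligibility. I would handle it by fixing a countable dense subgroup $D\subset N_m$ closed under dilations by rationals, and expressing Pansu differentiability of $f_p$ at $e_G$ through countably many Borel conditions on $p$: the existence of $T_u(p):=\lim_{\lambda\to0,\,\lambda\in\Q}\hat f_{p,\lambda}(u)$ for each $u\in D$ (a Cauchy condition, Borel in $p$ since $(p,\lambda)\mapsto\hat f_{p,\lambda}(u)$ is continuous for $\lambda\neq0$ by continuity of $f$ and of the group operations), together with the homomorphism identities $T_{uu'}(p)=T_u(p)+T_{u'}(p)$ and $T_{\delta_q u}(p)=q\,T_u(p)$ for $u,u'\in D$ and $q\in\Q$. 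The common Lipschitz bound then propagates both the convergence and the homomorphism property from $D$ to all of $N_m$ by density, so the complement of this Borel set is exactly $B_m$; alternatively one can read off measurability from the Borel-regularity results of the previous section, such as Lemma~\ref{l:borel-decomp}. Everything else --- the left-invariance, the difference-quotient identity, and the countable union defining $\Omega$ --- is routine.
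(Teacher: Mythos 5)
Your proposal is correct and follows essentially the same route as the paper: fix $m$, identify $\hat f_{p,\lambda}$ with the Pansu difference quotient of the left-translate $u\mapsto f(g^{-1}u)$ restricted to $N_m$, and conclude $\vol_{N_m}(N_m\cap g\Omega_m)=0$ from Pansu's almost-everywhere differentiability theorem on the finite-dimensional Carnot group $N_m$. The only difference is that you additionally spell out the Borel measurability of the bad sets $B_m$ (via a countable dense dilation-closed subset and Cauchy/homomorphism conditions), a point the paper's proof leaves implicit; this is a welcome completion rather than a divergence.
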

\begin{proof}
  Fix a $N_m$ and let $A_m$ denote the $p \in G$ for which the limit $\lim_{\lambda \to 0} \hat{f}_{p,\lambda}(u) $ does not exist or the limit map is not a homomorphism. We will show $\vol_{N_m}(gA_m \cap N_m) = 0$ for all $g$.  This would prove the lemma. 

  Now fix a $g \in G$ and let $p \in gA_m \cap N_m$.  If we define $F_g(u) = f(g^{-1}u)$ as a map defined on $N_m$, then $gp \in N_m$ is a nondifferentiability point of $F$.  However, by Pansu's theorem, $F$ is differentiable almost everywhere with respect to the Haar measure on $N_m$.  Thus, $\vol_{N_m}(gA_m \cap N_m) = 0$, which proves the lemma.
\end{proof}

We can now prove our differentiability result.

\begin{proof}[Proof of Theorem \ref{th:main}]
  As the theorem is vacuous if $G$ does not admit a filtration by Carnot subgroups, we may assume there is a filtration $(N_m)_m$.  By the previous lemma, we have that $Df_g$ exists and is a homomorphism when restricted to any $N_m$ for $g$ outside of a $(N_m)$-negligible set.  Take such a $g$.  We first claim that $Df_g$ exists on all of $G$.  Indeed, this follows from the fact that the maps
  \begin{align*}
    u \mapsto n(f(g \delta_{1/n}(u)) - f(g))
  \end{align*}
  are uniformly Lipschitz and converge, by assumption, on the dense subset $\bigcup_m N_m$.

  As $Df_g$ is a homomorphism when restricted to any $N_m$, an easy density argument then gives that $Df_g$ is also a homomorphism.  This proves the theorem.
\end{proof}

\section{Examples}\label{sec:ex}
\subsection{$L^p$- and $\ell_p$-sums}
We start with the definition of $L^p$-sums of metric spaces.
Let $\Omega=(\Omega, \mu)$ be a measure space, e.g., the natural numbers $\N$ with the counting measure.
Fix $p\in[1,\infty)$.
For each $\omega \in \Omega$ fix a pointed metric space $X_\omega=(X_\omega, d_\omega, \star_\omega)$.
We first define the collection 
${\mathcal M}(\Omega,  ( X_\omega)_\omega )$
of  {\em measurable sequences} as the set of those sequences $(x_\omega)_{\omega\in\Omega} $ with $  x_\omega \in X_\omega $ such that the function $   \omega \in \Omega \mapsto d  (   x_\omega, \star_\omega   )\in \R$ is measurable.
Then we define
$$L^p( ( X_\omega)_\omega) := 
 \{ (x_\omega)_{\omega } \in {\mathcal M}(\Omega,  ( X_\omega)_\omega ) :    
\int  d  (   x_\omega, \star_\omega   )^p {\rm d} \mu(\omega) < \infty\}.
$$
 We write $L^p(\Omega; X)$ for $L^p( ( X_\omega)_\omega)$ if $X_\omega=X$ for all $\omega \in \Omega$. 

The {\em distance function} on $L^p( ( X_\omega)_\omega)$ between  $(x_\omega)_{\omega\in\Omega}, 
(y_\omega)_{\omega\in\Omega}
\in L^p( ( X_\omega)_\omega)$
is 
$$ d  (  (x_\omega)_{\omega\in\Omega}, 
(y_\omega)_{\omega\in\Omega}   )    :=  \left( \int  d  (   x_\omega, y_\omega   )^p {\rm d} \mu(\omega)  \right)^{1/p}.$$
%We may just restrict to the following spaces.

\begin{proposition} The set $L^p( ( X_\omega)_\omega) $ is naturally a pointed metric space, which is geodesic if all $X_\omega$ are geodesic.
\end{proposition}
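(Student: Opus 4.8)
The plan is to verify the metric-space axioms first and then produce geodesics fiber by fiber. Write $\star := (\star_\omega)_{\omega\in\Omega}$ for the candidate basepoint; it lies in $L^p((X_\omega)_\omega)$ because $\omega \mapsto d_\omega(\star_\omega,\star_\omega)\equiv 0$ is measurable with vanishing integral. As is standard for $L^p$-type spaces, I would take the points of $L^p((X_\omega)_\omega)$ to be equivalence classes of measurable sequences that agree $\mu$-almost everywhere; this is exactly what the identity-of-indiscernibles axiom needs, and it is harmless in the motivating $\ell_p$ case (counting measure on $\N$), where a.e.\ equality is genuine equality. To see the distance is well defined (finite), I would use, for measurable sequences $x=(x_\omega)$ and $y=(y_\omega)$, the fiberwise triangle inequality $d_\omega(x_\omega,y_\omega)\le d_\omega(x_\omega,\star_\omega)+d_\omega(\star_\omega,y_\omega)$ together with the elementary convexity bound $(a+b)^p\le 2^{p-1}(a^p+b^p)$, valid for $p\ge 1$, to obtain
\[
\int d_\omega(x_\omega,y_\omega)^p\,{\rm d}\mu(\omega) \le 2^{p-1}\!\left(\int d_\omega(x_\omega,\star_\omega)^p\,{\rm d}\mu + \int d_\omega(\star_\omega,y_\omega)^p\,{\rm d}\mu\right) < \infty.
\]
One also needs $\omega\mapsto d_\omega(x_\omega,y_\omega)$ to be measurable; this is where the measurable structure of the field $(X_\omega)_\omega$ enters, and in the $\ell_p$ case it is automatic.

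For the three axioms: symmetry is immediate from symmetry of each $d_\omega$; and $d(x,y)=0$ forces $d_\omega(x_\omega,y_\omega)=0$ for $\mu$-a.e.\ $\omega$, i.e.\ $x_\omega=y_\omega$ a.e., which is precisely equality in the quotient. The only structural point is the triangle inequality, which I would reduce to \emph{Minkowski's inequality} in the classical Lebesgue space $L^p(\Omega,\mu)$: setting $f(\omega)=d_\omega(x_\omega,y_\omega)$ and $g(\omega)=d_\omega(y_\omega,z_\omega)$, the fiberwise inequality $d_\omega(x_\omega,z_\omega)\le f(\omega)+g(\omega)$ and monotonicity of $t\mapsto t^p$ give
\[
d(x,z) = \|d_\omega(x_\omega,z_\omega)\|_{L^p(\mu)} \le \|f+g\|_{L^p(\mu)} \le \|f\|_{L^p(\mu)} + \|g\|_{L^p(\mu)} = d(x,y) + d(y,z).
\]

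For the geodesic claim, assume each $X_\omega$ is geodesic and fix $x,y\in L^p((X_\omega)_\omega)$. For each $\omega$ I would choose a constant-speed geodesic $\gamma_\omega:[0,1]\to X_\omega$ from $x_\omega$ to $y_\omega$, so that $d_\omega(\gamma_\omega(s),\gamma_\omega(t))=|s-t|\,d_\omega(x_\omega,y_\omega)$ for all $s,t$, and set $\gamma(t):=(\gamma_\omega(t))_\omega$. Membership $\gamma(t)\in L^p((X_\omega)_\omega)$ follows as above from $d_\omega(\gamma_\omega(t),\star_\omega)\le t\,d_\omega(x_\omega,y_\omega)+d_\omega(x_\omega,\star_\omega)$, and the distance along the path computes fiberwise:
\[
d(\gamma(s),\gamma(t))^p = \int d_\omega(\gamma_\omega(s),\gamma_\omega(t))^p\,{\rm d}\mu = |s-t|^p \int d_\omega(x_\omega,y_\omega)^p\,{\rm d}\mu = |s-t|^p\,d(x,y)^p,
\]
so $t\mapsto\gamma(t)$ is an isometric, constant-speed path joining $x$ to $y$, \emph{provided it is a curve in the space at all}.

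\textbf{The main obstacle.} The one genuinely nontrivial point is measurability: $\gamma(t)=(\gamma_\omega(t))_\omega$ must be a measurable sequence for each $t$, i.e.\ $\omega\mapsto d_\omega(\gamma_\omega(t),\star_\omega)$ must be $\mu$-measurable. Since geodesics in $X_\omega$ need not be unique, one must select the $\gamma_\omega$ measurably in $\omega$, which is a measurable-selection problem. In the $\ell_p$ case ($\Omega=\N$ with counting measure) there is nothing to prove, as any coordinatewise choice yields a measurable sequence. In general I would invoke a measurable selection theorem (e.g.\ Kuratowski--Ryll-Nardzewski) applied to the closed-valued multifunction $\omega\mapsto\{\text{geodesics }x_\omega\to y_\omega\}$, again using the measurable structure of the field $(X_\omega)_\omega$; alternatively one constructs $\gamma$ on the dyadic rationals by iterated measurable midpoint selection and extends by continuity. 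I expect this selection step to be the crux, whereas the distance identity itself is a direct fiberwise computation.
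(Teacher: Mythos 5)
Your proof is correct and follows essentially the same route as the paper: the metric axioms are reduced to the classical Minkowski inequality for the norm $\|(x_\omega)_\omega\|_p := d((x_\omega)_\omega,(\star_\omega)_\omega)$, and the geodesic is built fiberwise from constant-speed geodesics $\gamma_\omega$, the only cosmetic difference being that you verify $d(\gamma(s),\gamma(t))=|s-t|\,d(x,y)$ directly, while the paper checks $d(\gamma(t_{i-1}),\gamma(t_i))=(t_i-t_{i-1})\,d(x,y)$ over arbitrary partitions and concludes $L(\gamma)=d(x,y)$ --- the two computations are equivalent. The measurability points you raise (a.e.\ identification for the identity of indiscernibles, measurability of $\omega\mapsto d_\omega(x_\omega,y_\omega)$, and measurable selection of the fiberwise geodesics) are genuine and are passed over in silence by the paper's proof; as you note, they trivialize for counting measure, which is the setting ($\ell_p$-sums) the paper actually uses in its later examples.
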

\begin{proof}
	The fact that $ d $ is a metric for  $L^p( ( X_\omega)_\omega) $ follows from the usual proof of Minkowski inequality for the norm $ \|(x_\omega)_{\omega\in\Omega}\|_p \coloneqq  d  (  (x_\omega)_{\omega\in\Omega}, (\star_\omega)_{\omega\in\Omega}) $.% and we refer to \todo[inline]{Does this require a citation? E: I don't know. Sean, what do you think?} for details.
	
	Let us then show that $L^p( ( X_\omega)_\omega) $ is geodesic if $ X_\omega $ is geodesic for each $ \omega \in \Omega $. Let $ (x_\omega)_{\omega }, (y_\omega)_{\omega } \in L^p( ( X_\omega)_\omega) $. Now for all $ \omega \in \Omega $ there exists a curve $ \gamma_\omega \colon [0,1] \to X_\omega $ taking $ x_\omega $ to $ y_\omega $ such that $ d(x_\omega,y_\omega) = L(\gamma_\omega)$. We may assume that $ \gamma_\omega $ are parametrized by constant speed.
	
	Let $ \gamma \colon [0,1] \to L^p( ( X_\omega)_\omega) $, $ \gamma(t) = (\gamma_\omega(t))_\omega $. The curve $ \gamma $ is well defined, since for all $ t\in [0,1] $,
	\[
	d(\gamma(t),(x_\omega)_\omega)^p = \int d(\gamma_\omega(t),x_\omega)^p \mathrm{d}\mu(\omega) \leq \int d(y_\omega,x_\omega)^p \mathrm{d}\mu(\omega) = d((y_\omega)_\omega,(x_\omega)_\omega)^p 
	\]
	and so
	\[
	d(\gamma(t),(\star_\omega)_\omega) \leq d(\gamma(t),(x_\omega)_\omega) + d((x_\omega)_\omega,(\star_\omega)_\omega) \leq d((y_\omega)_\omega,(\star_\omega)_\omega)+ 2d((x_\omega)_\omega,(\star_\omega)_\omega) < \infty.
	\]
	
	Let then $ 0 = t_0 < t_1 < \ldots < t_n = 1 $ be a partition of $ [0,1] $. Since $ \gamma_\omega $ are geodesics with constant speed, we have that
	\[
	d(\gamma_\omega(t_{i-1}),\gamma_\omega(t_{i})) = (t_i-t_{i-1})d(x_\omega,y_\omega)
	\]
	for all $ \omega\in \Omega $ and $ i \in \{1,\ldots, n\} $. Therefore
	\begin{align*}
	\sum_{i=1}^{n}d(\gamma(t_{i-1}),\gamma(t_i)) &= 	\sum_{i=1}^{n} \left( \int d(\gamma_\omega(t_{i-1}),\gamma_\omega(t_i))^p \mathrm{d}\mu(\omega)\right)^{1/p} \\
	&=\sum_{i=1}^{n} \left( \int (t_i-t_{i-1})^pd(x_\omega,y_\omega)^p \mathrm{d}\mu(\omega)\right)^{1/p} \\
	&= \sum_{i=1}^{n}(t_i-t_{i-1} )\,d((x_\omega)_{\omega },(y_\omega)_{\omega }) \\
	&= d((x_\omega)_{\omega },(y_\omega)_{\omega }).
	\end{align*}
	Hence
	\[
	L(\gamma) = \inf_{\mathcal{P}}(\sum_{t_i \in \mathcal{P}} d(\gamma(t_{i-1}),\gamma(t_i))) =  d((x_\omega)_{\omega },(y_\omega)_{\omega }),
	\]
	where the infimum is over all partitions $ \mathcal P $ of $ [0,1] $. The proof is complete.
\end{proof}
Notice that if each $ X_\omega $ admits a group structure we may define a group operation for $ L^p( ( X_\omega)_\omega) $ element wise.
We focus now on $\ell_p$-sums of groups.
%\begin{proposition} If 
%all $X_\omega$ are WHG, then 
% $L^p( ( X_\omega)_\omega) $ is a WHG.
%\end{proposition}
For a countable family $ \{G_n\}_{n\in \N} $ of groups we define the $\ell_p((G_n)_n)$ as  
$$\ell_p((G_n)_n) \coloneqq \{ (x_n)_{n\in\N} :   x_n \in G_n, 
\sum_{n\in\N}  d  (   x_n, e_n   )^p     < \infty\}, $$
\[
(x_n)_n \cdot (y_n)_n \coloneqq (x_ny_n)_n.
\]
We write $\ell_p(G)$ for $\ell_p((G_n)_n)$ if $G_n=G$ for all $n\in \N$. 

\begin{proposition}  Let $(G_n)_{n\in \N}$ be sequence of topological groups metrized by left-invariant metrics and let $ p\in [1,\infty) $.
Then $\ell_p( (G_n)_{n\in \N} )$ is a topological group.
\end{proposition}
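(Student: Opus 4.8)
The plan is to realize $\ell_p((G_n)_n)$ as the special case of the $L^p$-sum construction with $\Omega=\N$ equipped with the counting measure and base points $e_n$, so that the distance function is already a genuine metric by the previous proposition, and its induced topology is the one we work with. It then remains to check two things: that $\ell_p((G_n)_n)$ is a group under the componentwise operation, and that multiplication and inversion are continuous for the $\ell_p$-metric.

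For the group structure the only points needing argument are that the operations preserve $p$-summability, and here I would use left-invariance repeatedly: $d(x_ny_n,e_n)\le d(x_ny_n,x_n)+d(x_n,e_n)=d(y_n,e_n)+d(x_n,e_n)$ and $d(x_n^{-1},e_n)=d(e_n,x_n)=d(x_n,e_n)$. Combined with Minkowski's inequality for sequences these give $\|(x_ny_n)_n\|_p\le\|(x_n)_n\|_p+\|(y_n)_n\|_p<\infty$ and $\|(x_n^{-1})_n\|_p=\|(x_n)_n\|_p<\infty$, so products and inverses of $\ell_p$-sequences are again in $\ell_p$; the remaining axioms (identity $(e_n)_n$, associativity) hold componentwise.

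For continuity the key preliminary is a uniform-tail lemma: if $\xi^{(k)}=(x_n^{(k)})_n\to\alpha=(a_n)_n$ in $\ell_p$, then $\lim_{N\to\infty}\sup_k\sum_{n>N}d(x_n^{(k)},e_n)^p=0$. I would prove this by splitting on $k$: for $k$ beyond some $K$ the tail of $\xi^{(k)}$ is controlled by the small tail of $\alpha$ plus $d(\xi^{(k)},\alpha)$ via the pointwise triangle inequality and Minkowski, while the finitely many remaining $\xi^{(k)}$ each have vanishing tails because they lie in $\ell_p$. Granting this, I treat multiplication by a head/tail decomposition. Coordinatewise, $\ell_p$-convergence forces $x_n^{(k)}\to a_n$ and $y_n^{(k)}\to b_n$ in each $G_n$ (since $d(x_n^{(k)},a_n)\le d(\xi^{(k)},\alpha)$), so by the topological-group structure of each $G_n$ the finite head $\sum_{n\le N}d(x_n^{(k)}y_n^{(k)},a_nb_n)^p\to 0$ as $k\to\infty$. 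For the tail I deliberately avoid right-translation estimates and use only the crude bounds $d(x_n^{(k)}y_n^{(k)},a_nb_n)\le d(x_n^{(k)}y_n^{(k)},e_n)+d(a_nb_n,e_n)$ and $d(x_ny_n,e_n)\le d(x_n,e_n)+d(y_n,e_n)$; by the uniform-tail lemma applied to both $\xi^{(k)}$ and $\eta^{(k)}$, together with convergence of the series defining $\alpha\beta$, this tail is uniformly small in $k$ once $N$ is large, and the inequality $(a+b)^{1/p}\le a^{1/p}+b^{1/p}$ lets me combine head and tail. Continuity of inversion is handled identically, using $d(x_n^{-1},e_n)=d(x_n,e_n)$ for the tail and coordinatewise continuity of inversion in each $G_n$ for the head.

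The main obstacle is precisely the tail of the multiplication map: because the metrics are only left-invariant, right translations are not isometries, so there is no Lipschitz, let alone uniform, control of $d(x_ny_n,a_nb_n)$ by $d(x_n,a_n)$ and $d(y_n,b_n)$ across the infinitely many coordinates. The resolution is to abandon any coordinatewise estimate of the tail in terms of the perturbations and instead bound it by the tails of the factors themselves, which are uniformly small by the uniform-tail lemma; the genuine topological-group structure of the individual $G_n$ is then only invoked on the finite head, where joint continuity of the operations is automatic.
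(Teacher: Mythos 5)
Your proof is correct, but it takes a genuinely different route from the paper's. The paper never formulates a uniform-tail lemma: it first proves continuity of each right translation $R_{(b_n)_n}$ by a head/tail split in which the tail is controlled through left-invariance by the tail of the \emph{fixed} translator, namely $d(a_{n,j}b_n,a_nb_n)\le 2\,d(b_n,e_n)+d(a_{n,j},a_n)$, and then deduces joint continuity of multiplication from the factorization $d(\xi_j\eta_j,\alpha\beta)\le d(\eta_j,\eta)+d(R_\beta\xi_j,R_\beta\alpha)$ (the first term by left-invariance), with a separate but analogous computation for inversion. You instead prove an equi-smallness statement for the tails of any convergent sequence in $\ell_p$ --- $\lim_{N\to\infty}\sup_k\sum_{n>N}d(x_n^{(k)},e_n)^p=0$ --- and then handle multiplication and inversion directly: joint continuity of the operations in each $G_n$ disposes of the finite head, while the tail of the product is bounded not by any perturbation estimate but by the tails of the factors themselves, via $d(x_ny_n,a_nb_n)\le d(x_ny_n,e_n)+d(a_nb_n,e_n)$ and $d(x_ny_n,e_n)\le d(x_n,e_n)+d(y_n,e_n)$. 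Your diagnosis of the obstacle is exactly right (right translations are not isometries for a merely left-invariant metric, so no coordinatewise Lipschitz control of the tail is available), and your resolution is sound; your lemma requires the small extra observation that sequential continuity suffices, which holds since the spaces are metric. What each approach buys: the paper's factorization through right translations keeps every estimate a one-variable perturbation and is the classical reduction for left-invariant metrics, at the cost of needing the translator's tail and a separate inversion argument; your uniform-tail lemma is a single reusable tool that yields joint continuity of multiplication and continuity of inversion by the identical mechanism, and you additionally verify closure of $\ell_p((G_n)_n)$ under the group operations (via $d(x_ny_n,e_n)\le d(x_n,e_n)+d(y_n,e_n)$, $d(x_n^{-1},e_n)=d(x_n,e_n)$, and Minkowski), a point the paper leaves implicit.
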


\begin{proof}
We first show that the right translations are continuous. Fix $(b_n)_n\in \ell_p( (G_n)_{n\in \N} )$, that is,
$b_n\in G_n$ and $\sum_{n=1}^\infty |b_n|^p < \infty$, where $|b_n|:=d(b_n, e)$ and $d$ is the distance on $G_n$.
Let $(a_{n,j})_n$ be a sequence in $ \ell_p( (G_n)_{n\in \N} )$ converging to some $(a_{n})_n$.
Fix some $\eps>0$. We take $N$ large enough so that $\sum_{n=N+1}^\infty |b_n|^p < \eps$.
Then, being $N$ fixed and being the right translations $R_{b_1}, \ldots, R_{b_N}$ continuous, we take $J$ large enough so that for all $j>J$ and all $n=1,\ldots, N$
\begin{eqnarray}
d( (a_{n,j})_n  ,  (a_{n})_n  ) &<&\eps\\
d( R_{b_n}(a_{n,j})  ,  R_{b_n}(a_{n})  )^p &<&\eps/N.
\end{eqnarray}
Notice that consequently 
\begin{eqnarray*}
\sum_{n=N+1}^\infty  d (   a_{n,j} b_n  , a_{n} b_n  )^p 
&\leq & 
\sum_{n=N+1}^\infty  ( d (   a_{n,j} b_n  , a_{n,j}  )+ d (   a_{n,j}  , a_{n}    )+ d (   a_{n }   , a_{n} b_n  )   )^p
\\
&= & 
\sum_{n=N+1}^\infty  ( |b_n  |+ d (   a_{n,j}  , a_{n}    )+ | b_n  |   )^p
\\
&\leq & 
2^pd( (a_{n,j})_n  ,  (a_{n})_n  ) + 2^{p+1} \sum_{n=N+1}^\infty |b|  
\\
&\leq & 2^p\cdot3\eps,
\end{eqnarray*}
where we used the trick
\[
\sum(a+b)^p \leq \sum 2^p \max\{a,b\}^p \leq 2^p(\sum a^p + \sum b^p).
\]
Then we get for all $j>J$
\begin{eqnarray*}
d( R_{(b_n)_n}(a_{n,j})_n  , R_{(b_n)_n} (a_{n})_n  )
&= & 
\sum_{n= 1}^\infty d (   a_{n,j} b_n  , a_{n} b_n  )^p 
\\
&= & 
\left(\sum_{n= 1}^N   d( R_{b_n}(a_{n,j})  ,  R_{b_n}(a_{n})  )^p   \right)+\sum_{n=N+1}^\infty  d (   a_{n,j} b_n  , a_{n} b_n  )^p \\
&\leq & N \eps /N + 2^p\cdot 3 \eps = (1+3\cdot 2^p)\eps.
\end{eqnarray*}

Consequently, the multiplication in $\ell_p( (G_n)_{n\in \N} )$ is continuous since, if $(a_{n,j})_n \to (a_{n})_n$ and $(b_{n,j})_n \to (b_{n})_n$, as $j\to \infty$, then using left invariance we have
\begin{eqnarray*}
d( (a_{n,j})_n  (b_{n,j})_n ,   (a_{n})_n (b_n)_n  ) &\leq& 
d( (a_{n,j})_n  (b_{n,j})_n ,   (a_{n,j})_n (b_{n})_n  ) + d( (a_{n ,j})_n  (b_n)_n ,   (a_{n})_n (b_n)_n  )
%\sum_{n= 1}^\infty d (   a_{n,j} b_{n,j}  , a_{n} b_n  ) 
\\
&\leq& 
d(    (b_{n,j})_n ,     (b_{n})_n  )  + d( R_{(b_n)_n}(a_{n,j})_n  , R_{(b_n)_n} (a_{n})_n  ) \to 0.
\end{eqnarray*}

We then show that the inversion is also continuous. Let  $(a_{n,j})_n \to (a_{n})_n$. Take $N$ large so that   $\sum_{n=N+1}^\infty |a_n|^p < \eps$.
Since the inversions in $G_1, \ldots, G_N$ are continuous, there exists $J$ such that for all $j>J$ and all $n=1,\ldots, N$ we have
\begin{eqnarray}
d( (a_{n,j})_n  ,  (a_{n})_n  ) &<&\eps\\
d( a_{n,j}^{-1}  ,  a_{n}^{-1}  )^p &<&\eps/N.
\end{eqnarray}
Then  for all $j>J$
\begin{eqnarray*}
d( (a_{n,j}^{-1} )_n   ,   (a_{n}^{-1} )_n    ) &=& 
\sum_{n= 1}^N  d (   a_{n,j} ^{-1}  , a_{n} ^{-1}   )^p
+
\sum_{n=N+1}^\infty  d (   a_{n,j} a_n^{-1}  , e  )^p
\\
&\leq & 
N \eps /N
+\sum_{n=N+1}^\infty  (  d (   a_{n,j} a_{n}^{-1} , a_{n,j}   ) + d ( a_{n,j} , a_{n}    )+ d (   a_n  , e  )   )^p
\\
&\leq & 
\eps + 
   2^p d( (a_{n,j})_n  ,  (a_{n})_n  ) + 2^{p+1}  \sum_{n=N+1}^\infty |a_n|^p 
\\
&\leq & (1+3\cdot 2^p)\eps.
\end{eqnarray*}
\end{proof}
\begin{remark}
	In a similar manner, we get that
	\[
    c_0 (G_n)_{n\in \N} ) \coloneqq \{ (x_n)_{n} \in \ell_\infty( (G_n)_{n\in \N} ) : \lim_{n\to \infty}d(x_n,e_{G_n}) = 0\}.
	\]
	is a topological group.
\end{remark}

\subsection{Examples of metric scalable groups} \label{s:examples}
Using the previous subsection, we can  build  examples of metric scalable groups starting with arbitrary sequences of Carnot groups equipped with homogeneous distances.
\begin{prop} \label{p:sum-scalable}
	Let $(G_n)_{n\in \N}$ be sequence of metric scalable groups and let $p \in [1,\infty)$. Then $\ell_p( (G_n)_{n\in \N} )$ is a metric scalable group. Moreover, if each $ G_n $ is complete and admits a filtration by Carnot subgroups, then $\ell_p( (G_n)_{n\in \N} )$ is complete and admits a filtration by Carnot subgroups.
\end{prop}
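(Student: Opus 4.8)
The plan is to establish the two assertions separately. For the first, I equip $\ell_p((G_n)_n)$ with the coordinatewise dilation and verify the axioms of Definition~\ref{def:metric:scalable:group} one at a time; for the second, I build a filtration of $\ell_p((G_n)_n)$ from finite block products of the filtrations of the factors and check completeness by the standard $\ell_p$ argument. Writing $\delta^{(n)}$ for the dilation of $G_n$, I define
\[
\delta_\lambda((x_n)_n) := (\delta^{(n)}_\lambda(x_n))_n .
\]
This lands back in $\ell_p((G_n)_n)$ because $\delta^{(n)}_\lambda$ fixes $e_n$ and scales its metric, so $\sum_n d(\delta^{(n)}_\lambda(x_n), e_n)^p = |\lambda|^p \sum_n d(x_n, e_n)^p < \infty$; the very same computation gives the metric-scaling identity $d(\delta_\lambda p, \delta_\lambda q) = |\lambda|\, d(p,q)$. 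The algebraic axioms ($\delta_\lambda \circ \delta_\mu = \delta_{\lambda\mu}$, $\delta_0 \equiv e$, and $\delta_\lambda \in \mathrm{Aut}$ for $\lambda \neq 0$ with inverse $\delta_{1/\lambda}$) all hold coordinatewise since the group law is coordinatewise. Left-invariance and admissibility of $d$ are inherited from the factors, so with the previous proposition $\ell_p((G_n)_n)$ is already a topological group.

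The main technical obstacle in the first assertion is the joint continuity of $\delta \colon \R \times \ell_p((G_n)_n) \to \ell_p((G_n)_n)$. For $\lambda_j \to \lambda$ and $x^{(j)} \to x$ I would split
\[
d(\delta_{\lambda_j}(x^{(j)}), \delta_\lambda(x)) \leq d(\delta_{\lambda_j}(x^{(j)}), \delta_{\lambda_j}(x)) + d(\delta_{\lambda_j}(x), \delta_\lambda(x)),
\]
where the first term equals $|\lambda_j|\, d(x^{(j)}, x) \to 0$ by the scaling identity and boundedness of $(\lambda_j)$. For the second term I would prove that $\lambda \mapsto \delta_\lambda(x)$ is continuous for fixed $x = (x_n)_n$: each summand $d(\delta^{(n)}_{\lambda_j}(x_n), \delta^{(n)}_\lambda(x_n))^p \to 0$ by continuity of $\delta^{(n)}$, while the bound $d(\delta^{(n)}_{\lambda_j}(x_n), \delta^{(n)}_\lambda(x_n))^p \leq (|\lambda_j| + |\lambda|)^p d(x_n, e_n)^p \leq (M + |\lambda|)^p d(x_n, e_n)^p$, with $M = \sup_j |\lambda_j|$, is summable; dominated convergence for series then forces the total sum to $0$.

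For the second assertion, completeness follows the standard route: a Cauchy sequence in $\ell_p((G_n)_n)$ is coordinatewise Cauchy, since $d(x_n^{(j)}, x_n^{(k)}) \leq d(x^{(j)}, x^{(k)})$, hence converges coordinatewise by completeness of each $G_n$ to some $(x_n)_n$; retaining only finitely many coordinates in the Cauchy estimate and then passing to the limit shows both $(x_n)_n \in \ell_p((G_n)_n)$ and $x^{(j)} \to (x_n)_n$. For the filtration, let $(N^{(n)}_m)_m$ be a filtration of $G_n$ by Carnot subgroups and put
\[
M_k := \{(x_n)_n : x_n \in N^{(n)}_k \text{ for } n \leq k, \ x_n = e_n \text{ for } n > k\}.
\]
Each $M_k$ is a scalable subgroup (the $N^{(n)}_k$ are scalable and dilations act coordinatewise) isomorphic as a topological group to the finite product $\prod_{n=1}^k N^{(n)}_k$; a finite product of Carnot groups is again a Carnot group, with stratification the direct sum of the factor stratifications and dilations acting coordinatewise, so $M_k$ carries a Carnot structure matching $\delta$. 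The inclusions $M_k < M_{k+1}$ are immediate from $N^{(n)}_k < N^{(n)}_{k+1}$.

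The remaining and most substantive point is density of $\bigcup_k M_k$. Given $x = (x_n)_n$ and $\epsilon > 0$, I would first truncate, choosing $N$ with $\sum_{n>N} d(x_n, e_n)^p < (\epsilon/2)^p$, and then approximate each of the finitely many coordinates $x_n$ with $n \leq N$ by some $z_n \in N^{(n)}_{m_n}$ satisfying $d(x_n, z_n) < \epsilon/(2 N^{1/p})$, which is possible because $G_n = \overline{\bigcup_m N^{(n)}_m}$. Taking $k = \max(N, m_1, \dots, m_N)$ places the element $(z_1, \dots, z_N, e_{N+1}, e_{N+2}, \dots)$ in $M_k$ and within $\epsilon$ of $x$ in $\ell_p((G_n)_n)$. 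Hence $(M_k)_k$ is a filtration by Carnot subgroups, which completes the proof.
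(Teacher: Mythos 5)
Your proof is correct and takes essentially the same approach as the paper: coordinatewise dilations checked against the scalable-group axioms (where the paper invokes the preceding topological-group proposition and calls the rest straightforward, you supply the joint-continuity and dominated-convergence details), the standard $\ell_p$ completeness argument, and a filtration by finite block products of the factor filtrations with density obtained by truncation. The only cosmetic difference is the block shape --- you use the square blocks $N^{(1)}_k \times \cdots \times N^{(k)}_k \times \{e\}^{\N}$ while the paper uses the antidiagonal $N^1_m \times N^2_{m-1} \times \cdots \times N^m_1 \times \{e\}^{\N}$ --- and both choices are nested with dense union, so nothing of substance changes.
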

% \todo[inline]{Check the validity and prove the blue line \\
% T: The blue line is correct and the proof is exactly the same as for classical $\ell_p$-spaces. Is it enough just to mention this? As in the proof now.}
\begin{proof}
	We define the scaling map $ \delta \colon \R \times \ell_p((G_n)_n) \to \ell_p((G_n)_n)$ element wise using the scalings of each scalable group $ G_n $. By the previous proposition, $ \ell_p((G_n)_n) $ is a topological group. Hence it remains to see that $ \delta $ satisfies the conditions of a scalable group as in Definition \ref{def:scalable:group} and that the metric is homogeneous with respect to $ \delta $, which is straightforward to check. The proof for the fact that $\ell_p( (G_n)_{n\in \N} )$ is complete assuming that each $G_n$ is complete, is analogous to the proof of completeness of the classical $\ell_p$ spaces. Assume then that $(N_m^n)_m$ is a filtration by Carnot subgroups for each $G_n$. Then letting
	\[
	N_m = N_m^1 \times N_{m-1}^2 \times \cdots \times N_1^m \times \{e\}^\N
	\]
	for each $m\in \N$ defines a filtration by Carnot subgroups for $\ell_p( (G_n)_{n\in \N} )$. Indeed, each $N_m$ is isomorphic to a finite product of Carnot groups, and the union $\cup_m N_m$ is dense in $\ell_p( (G_n)_{n\in \N} )$ as the set of finite sequences is dense in $\ell_p( (G_n)_{n\in \N} )$.
\end{proof}
%\todo[inline]{Say something nice $ \downarrow $}
Proposition \ref{p:sum-scalable} gives us a simple way to construct many different noncommutative and infinite-dimensional metric scalable groups that admit filtrations by Carnot subgroups. Indeed, we may consider examples where each $ G_n $ is a Carnot group, like $G_n = \mathbb H^1$ or $G_n = \mathbb{H}^n$, where $ \mathbb{H}^n $ is the $ n $-th Heisenberg group equipped with a homogeneous distance.  
We stress that the last result does not require any bound on the nilpotency step of $(G_n)_{n\in \N}$, in case they are  Carnot groups.
In fact, an interesting example is when $G_n $ is the free Carnot group of step $n$ and rank 2, which we denote by $\mathbb F_{2,n}$. We state this example as a result.
%In fact, if we let $F_{2,n}$ denote the free Carnot group of step $n$ and rank 2, then we see that $\ell_2((\mathbb F_{2,n})_n)$ is a metric scalable group, but is not nilpotent.
\begin{proposition}
Even though $\ell_2((\mathbb F_{2,n})_n)$ is not nilpotent, 	it is a metric scalable group that is complete and admits a filtration by Carnot groups. Moreover, the subset $V_1(\ell_2((\mathbb F_{2,n})_n))$ generates $\ell_2((\mathbb F_{2,n})_n)$ and is separable.
\end{proposition}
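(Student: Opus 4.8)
The plan is to read off the three structural assertions (metric scalable, complete, filtered by Carnot groups) directly from Proposition~\ref{p:sum-scalable}, and then to dispatch the remaining claims one at a time. First I would note that each $\mathbb{F}_{2,n}$, being a free Carnot group of step $n$ and rank $2$ equipped with a homogeneous distance, is itself a complete metric scalable group admitting the (trivial) filtration $N^n_k=\mathbb{F}_{2,n}$. Applying Proposition~\ref{p:sum-scalable} with $p=2$ and $G_n=\mathbb{F}_{2,n}$ then immediately yields that $\ell_2((\mathbb{F}_{2,n})_n)$ is a complete metric scalable group, and the recipe $N_m=\mathbb{F}_{2,1}\times\cdots\times\mathbb{F}_{2,m}\times\{e\}^{\N}$ supplies an explicit filtration by Carnot subgroups, each $N_m$ being a finite product of Carnot groups and $\cup_m N_m$ the dense set of finitely supported sequences.

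For the non-nilpotency I would argue that no finite step can work. For each $s\in\N$, let $\iota_{s+1}\colon\mathbb{F}_{2,s+1}\to\ell_2((\mathbb{F}_{2,n})_n)$ be the injection placing an element in the $(s+1)$-th coordinate and the identity elsewhere; its image lies in $\ell_2$ since it has a single nonzero coordinate, and $\iota_{s+1}$ is a group homomorphism, hence sends iterated commutators to iterated commutators. Since $\mathbb{F}_{2,s+1}$ has nilpotency step exactly $s+1$, it carries a nontrivial simple commutator of weight $s+1$, whose image under $\iota_{s+1}$ is a nontrivial element of $G^{(s+1)}$, where $G=\ell_2((\mathbb{F}_{2,n})_n)$. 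Thus $G^{(s+1)}\neq\{e\}$ for every $s$, so $G$ is not nilpotent.

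The last two claims follow quickly. Because $G$ admits a filtration by Carnot subgroups, the first implication of Proposition~\ref{exist:filtration} (which does not use nilpotency) gives at once that $V_1(G)$ generates $G$ as a scalable group. For separability I would first observe that $G$ itself is separable: choosing a countable dense subset of each $\mathbb{F}_{2,n}$ containing the identity, the finitely supported sequences with entries in these subsets form a countable set dense in the $\ell_2$-sum. Since a subspace of a separable metric space is separable, $V_1(G)$ is separable. One may also identify $V_1(G)$ explicitly: a sequence $(x_n)_n$ lies in it precisely when $x_n\in V_1(\mathbb{F}_{2,n})\cong\R^2$ for every $n$, exhibiting $V_1(G)$ as an $\ell_2$-type sum of planes.

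The content of the statement is really the coexistence of the two features, so the only point needing care is making the non-nilpotency argument interact correctly with the filtration: I expect the main thing to verify is that the unbounded step of the free groups $\mathbb{F}_{2,n}$ produces genuinely nontrivial high-weight commutators inside $\ell_2$ (which the single-coordinate embedding guarantees) while each finite block $N_m$ nonetheless remains an honest finite-dimensional Carnot group of finite---though growing---step. No step involves a delicate estimate; the metric structure enters only through the already-established Proposition~\ref{p:sum-scalable} and through the elementary separability count.
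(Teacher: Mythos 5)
Your proposal is correct and takes essentially the same route as the paper's proof: both invoke Proposition~\ref{p:sum-scalable} for the metric scalable structure, completeness, and the filtration $N_m=\mathbb{F}_{2,1}\times\cdots\times\mathbb{F}_{2,m}\times\{e\}^{\N}$, then use the first implication of Proposition~\ref{exist:filtration} for generation and the componentwise identification $V_1(\ell_2((\mathbb{F}_{2,n})_n))=\ell_2((V_1(\mathbb{F}_{2,n}))_n)$ for separability. Your explicit single-coordinate embedding argument for non-nilpotency is a sound addition, as the paper asserts that part of the statement without writing out a proof.
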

\begin{proof}
The space $\ell_2((\mathbb F_{2,n})_n)$ is a metric scalable group by Proposition \ref{p:sum-scalable} and the filtration is simply given by
\[
N_m = \mathbb{F}_{2,1} \times \cdots \times \mathbb{F}_{2,m}.
\]
The first layer $V_1(\ell_2((\mathbb F_{2,n})_n))$ is now given by $\ell_2((V_1(\mathbb F_{2,n}))_n)$ as we defined the dilation map on $\ell_2((\mathbb F_{2,n})_n)$ component wise. Indeed, a sequence $(x_n)_n \in \ell_2((\mathbb F_{2,n})_n)$ is a one-parameter subgroup if and only if each $x_n \in \mathbb F_{2,n}$ is a one-parameter subgroup. The fact that $V_1(\ell_2((\mathbb F_{2,n})_n))$ generates follows from Proposition \ref{exist:filtration}. Moreover, $V_1(\ell_2((\mathbb F_{2,n})_n)) = \ell_2((V_1(\mathbb F_{2,n}))_n)$ is separable since now each $V_1(\mathbb F_{2,n})$ is separable.
\end{proof}
If $ \mathbb{H}^1 $ is the first Heisenberg group, then by Proposition \ref{p:sum-scalable}, for all $ p\in [1,\infty) $, the space $ \ell_p(\mathbb{H}^1) $ is a metric scalable group admitting filtration by Carnot subgroups. The space $ \ell_2(\mathbb{H}^1) $ has the extra property of being a Banach Lie group. Indeed, it can be modelled on $ \ell_2(\R^2) + \ell_1(\R) $, following \cite{Magnani-Rajala}.  However, we shall show that there are metric scalable groups, e.g. $ \ell_1(\mathbb{H}^1) $, admitting filtrations by Carnot groups that are not Banach manifolds. Hence the notion of metric scalable group strictly extends the one of Banach homogeneous group as defined in \cite{Magnani-Rajala}.
\begin{proposition}
The topological group $\ell_1( \mathbb H^{1})$ is not a Banach Lie group.
\end{proposition}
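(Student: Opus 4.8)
The plan is to argue by contradiction, exploiting that the \emph{vertical} (central) directions of $\ell_1(\mathbb H^1)$ scale like the \emph{square} of the distance, which forces the center to be modeled on a non-locally-convex space. Write $\mathbb H^1$ in exponential coordinates, so that its center is $\{(0,0,c):c\in\R\}$ and its homogeneous gauge is comparable to $|a|+|b|+|c|^{1/2}$. Since a sequence $(x_n)_n$ is central in $\ell_1(\mathbb H^1)$ iff each $x_n=(0,0,c_n)$ is central in $\mathbb H^1$, and since $d\big(((0,0,c_n))_n,\, e\big)\asymp\sum_n|c_n|^{1/2}$, the center $Z:=Z(\ell_1(\mathbb H^1))$ is, with its subspace topology, the space $E:=\{(c_n)_n:\sum_n|c_n|^{1/2}<\infty\}$ equipped with the translation-invariant metric $\rho(c,c')=\sum_n|c_n-c_n'|^{1/2}$. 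This $E$ is a complete metrizable topological vector space (an $F$-space) that is \emph{not} locally convex, hence not normable. The strategy is to show that, were $\ell_1(\mathbb H^1)$ a Banach Lie group, $E$ would be linearly homeomorphic to a Banach space, a contradiction.

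Concretely, suppose $G:=\ell_1(\mathbb H^1)$ is a Banach Lie group with Banach Lie algebra $\mathfrak g$ and exponential map $\exp\colon\mathfrak g\to G$, a continuous local diffeomorphism at $0$; note $G$ is connected, since $s\mapsto\delta_s(x)$ joins $e$ to any $x$. First I would record the direct classification of central one-parameter subgroups of $G$: a continuous homomorphism $\R\to G$ with image in $Z$ is exactly a map of the form $t\mapsto t\cdot z$ (scalar multiplication in $E$) for a unique $z\in E$. Let $\mathfrak z:=\{X\in\mathfrak g:\ad_X=0\}$; as the intersection of the kernels of the continuous maps $X\mapsto[X,Y]$, it is a closed subspace of $\mathfrak g$, hence itself a Banach space. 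I then claim $\exp$ restricts to a continuous linear bijection $\exp|_{\mathfrak z}\colon\mathfrak z\to E$. Linearity is immediate: for $X,Y\in\mathfrak z$ one has $[X,Y]=0$, so $\exp(X+Y)=\exp(X)\exp(Y)$, which in the abelian group $Z\cong E$ is the sum $\exp(X)+\exp(Y)$; homogeneity $\exp(tX)=t\exp(X)$ follows from the classification above applied to the central one-parameter subgroup $t\mapsto\exp(tX)$. For bijectivity I would use the classical fact that every continuous one-parameter subgroup of a Banach Lie group is of the form $t\mapsto\exp(tX)$: surjectivity then follows by realizing each $z\in E$ as $\exp(X)$ with $\exp(\R X)\subseteq Z$, whence $\ad_X=0$ and $X\in\mathfrak z$; injectivity follows from $\tfrac{d}{dt}\big|_{0}\exp(tX)=X$. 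Finally, $\exp|_{\mathfrak z}$ is a continuous linear bijection between the $F$-spaces $\mathfrak z$ and $E$, so by the open mapping theorem for $F$-spaces \cite{Rudin} it is a homeomorphism; thus $E$ is normable, contradicting its non-local-convexity.

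The main obstacle is not the topological bookkeeping but securing the two structural inputs from Banach Lie theory that make the contradiction rigorous: that the exponential map is a continuous local diffeomorphism whose differential at $0$ is the identity, and that continuous one-parameter subgroups coincide with $t\mapsto\exp(tX)$ (the infinite-dimensional analogue of the finite-dimensional automatic-smoothness statement). Both are standard for Banach Lie groups, and once granted, the only delicate point is that the identification $\mathfrak z\cong E$ must be a \emph{homeomorphism}, not merely a continuous bijection; this is exactly where I invoke the open mapping theorem in the non-locally-convex setting, which is valid for $F$-spaces and does not require a norm. It is precisely the mismatch ``$F$-space but not Banach'' of the center that obstructs any Banach manifold structure, and the same computation shows why the borderline case $\ell_2(\mathbb H^1)$, whose center is the Banach space $\ell_1(\R)$, escapes the obstruction.
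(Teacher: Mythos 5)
Your proof is correct, and while it rests on the same core obstruction as the paper's --- the center of $\ell_1(\mathbb{H}^1)$ is $\ell_{1/2}(\R)$, a complete metrizable but non-locally-convex topological vector space, hence not normable --- it reaches the contradiction by a genuinely different route at the crucial step. The paper argues that the center, being a \emph{closed subgroup} of a Banach Lie group, is itself a Banach Lie group, and then observes that on a vector group the exponential map of this putative Banach Lie structure would be a linear isomorphism, forcing $\ell_{1/2}(\R)$ to be normable. You never put a Lie structure on the center at all: you pass to the closed subspace $\mathfrak{z}=\ker\ad\subseteq\Lie(G)$, which is a Banach space for free, use the classification of central one-parameter subgroups (continuous additive maps into a topological vector space are linear) to show $\exp|_{\mathfrak{z}}$ is a continuous linear bijection onto $\ell_{1/2}(\R)$, and close with the open mapping theorem for $F$-spaces. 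This is a substantive improvement in rigor, not just bookkeeping: the paper's closed-subgroup step quietly invokes an analogue of Cartan's closed subgroup theorem, which \emph{fails} for Banach Lie groups --- e.g. $\{x\in\ell_2 : x_n\in\tfrac{1}{n}\Z \text{ for all } n\}$ is a closed, non-discrete, totally disconnected subgroup of the additive Banach Lie group $\ell_2$, and so carries no compatible Banach Lie group structure --- whereas your detour through $\ker\ad$ avoids attributing any manifold structure to the center. The price is the pair of standard Banach Lie inputs you flag (continuous one-parameter subgroups are of the form $t\mapsto\exp(tX)$, and $\der\exp_0=\mathrm{id}$), together with connectedness of $G$ (needed for $\ad_X=0$ to imply $\exp(\R X)\subseteq Z(G)$), which your dilation paths $s\mapsto\delta_s(x)$ correctly supply; both inputs are classical, so your argument is complete and in fact patches a genuine gap in the published proof.
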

\begin{proof}
	Suppose by contradiction that $\ell_1( \mathbb H^{1})$ is a Banach Lie group and let $ Z $ be the center of $ \mathbb H $. As $ Z $ is a closed subgroup of a Banach Lie group, $ Z $ is a Banach Lie group as well. However, recall that  $ (Z,d_\mathbb H) $ is isometric to $(\R,\sqrt{d_E}) $ and therefore
	\[
	 \ell_1(Z) = \{ (a_n)_{n\in \N}\,:\, \sum \sqrt{|a_n|} < \infty \} = \ell_{1/2}(\R).
\]
 Hence also $ \ell_{1/2}(\R) $ has a structure of a Banach Lie  group and its Lie algebra is a Banach space. But since $ \ell_{1/2}(\R) $ is a vector space, the exponential map $ \exp \colon \Lie(\ell_{1/2}(\R)) \to \ell_{1/2}(\R) $ is a linear isomorphism. This is a contradiction since $ \ell_{1/2}(\R) $ is not locally convex, and therefore not a normed space.
\end{proof}

\appendix
\section{Some useful commutator identities}
\label{ss:commidentities}
\begin{lemma}
	\label{l:split2}
	Let $ G $ be a group and $ x,y,z\in G $. Then
	\[
	[xy,z] = [x,[y,z]][y,z][x,z] \ja [z,xy] = [z,x][z,y][[y,z],x] = h[z,x][z,y],
	\]
	where $ h $ is a product of commutators of $ x,y,z $ of weight $ \geq 3 $.
\end{lemma}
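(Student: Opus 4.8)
The plan is to treat all three equalities as identities in the free group on $x,y,z$ and verify them by direct manipulation using only three elementary rules of the commutator calculus, which I would record first. Writing ${}^a b := aba^{-1}$ for conjugation, these are the two Leibniz-type expansions
\[
[ab,c] = {}^{a}[b,c]\,[a,c] \qquad\text{and}\qquad [a,bc] = [a,b]\,{}^{b}[a,c],
\]
together with the ``conjugation-as-commutator'' rule
\[
{}^{a}g = aga^{-1} = [a,g]\,g .
\]
Each of these is checked by cancelling the obvious $a^{-1}a$ and $g^{-1}g$ factors after substituting $[g,h]=ghg^{-1}h^{-1}$, so none of them costs real work.

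For the first identity I would apply the $[ab,c]$ rule with $(a,b,c)=(x,y,z)$ to get $[xy,z] = {}^{x}[y,z]\,[x,z] = x[y,z]x^{-1}[x,z]$, and then rewrite the conjugation using ${}^{x}[y,z] = [x,[y,z]]\,[y,z]$, which yields exactly $[x,[y,z]][y,z][x,z]$. For the second identity I would apply the $[a,bc]$ rule with $(a,b,c)=(z,x,y)$ to get $[z,xy] = [z,x]\,{}^{x}[z,y]$. The point is then that both ${}^{x}[z,y]$ and the claimed tail $[z,y][[y,z],x]$ equal $x[z,y]x^{-1}$: indeed, using $[z,y]=[y,z]^{-1}=:w^{-1}$ one computes $[z,y][[y,z],x] = w^{-1}\,(wxw^{-1}x^{-1}) = xw^{-1}x^{-1} = {}^{x}[z,y]$. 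Substituting gives $[z,xy] = [z,x][z,y][[y,z],x]$.

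The only step that requires a word of care is the final factored form $[z,xy]=h[z,x][z,y]$. Starting from the second identity, I would move the weight-three factor $c := [[y,z],x]$ past $[z,x][z,y]$ from the right to the left, so that
\[
h = \bigl([z,x][z,y]\bigr)\,c\,\bigl([z,x][z,y]\bigr)^{-1}
\]
is a conjugate of $c$. Since $c\in G^{(3)}$ and $G^{(3)}$ is a normal subgroup, $h$ again lies in $G^{(3)}$; by Lemma~\ref{l:simple:commutators} applied to the generating set $\{x,y,z\}$, every element of $G^{(3)}$ is a product of simple commutators of weight $\geq 3$ in $x,y,z$, so $h$ has the asserted form. (Equivalently, ${}^{g}c = [g,c]\,c$ with $g=[z,x][z,y]$ exhibits $h$ explicitly as a product of commutators of weight $5$ and $3$.) Thus the main obstacle is not conceptual but purely a matter of weight bookkeeping, and it is dispatched by the normality of $G^{(3)}$.
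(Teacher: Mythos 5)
Your proof is correct, and in substance it is the same elementary argument as the paper's: a direct verification by expanding $[g,h]=ghg^{-1}h^{-1}$, organized through the standard Leibniz identities rather than raw cancellation. Two places differ. First, for $[z,xy]$ the paper simply inverts the first identity, using $[a,b]=[b,a]^{-1}$ together with the fact that inversion reverses the order of a product, whereas you reprove it from the dual rule $[a,bc]=[a,b]\,{}^{b}[a,c]$ and the computation ${}^{x}[z,y]=[z,y][[y,z],x]$; both routes are equally short and your computation checks out. Second, and more substantively, for the final equality the paper only says that ``reordering the terms produces some higher order commutators into $h$,'' while you make this precise: moving $c=[[y,z],x]$ to the front replaces it by the conjugate $h={}^{g}c$ with $g=[z,x][z,y]$, and conjugation preserves the third term of the lower central series. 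Here you are more careful than the paper, though two small repairs are in order. The appeal to Lemma~\ref{l:simple:commutators} should be made inside the subgroup $H=\langle x,y,z\rangle$ (the ambient $G$ need not be generated by $x,y,z$; note $c\in H^{(3)}$, $g\in H$, so $h\in H^{(3)}$ by normality), and that lemma produces simple commutators in $x^{\pm1},y^{\pm1},z^{\pm1}$, not literally in $x,y,z$. Also, $[g,c]$ is not itself a weight-$5$ commutator, since $g$ is a product of two commutators rather than a commutator; one further application of your own rule $[ab,c]={}^{a}[b,c]\,[a,c]$ fixes this and gives the fully explicit form
\[
h=\bigl[[z,x],[[z,y],c]\bigr]\cdot\bigl[[z,y],c\bigr]\cdot\bigl[[z,x],c\bigr]\cdot c,
\]
a genuine product of commutators in $x,y,z$ of weights $7,5,5,3$. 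Neither point is a gap---the paper's own proof is no more precise at this step---and your explicit conjugation argument is arguably the cleanest justification of the stated form of $h$.
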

\begin{proof}
	For the first equation,
	\[
	[y,z][x,z] = [y,z]xz\inv{x}\inv{z} = [[y,z],x]xyz\inv{y}\inv{z}z\inv{x}\inv{z} = [[y,z],x][xy,z].
	\]
	Since  $ [a,b]=\inv{[b,a]} $, we get
	\[
	[xy,z] = [x,[y,z]][y,z][x,z].
	\]
	Using this we get again by $ [a,b]=\inv{[b,a]} $ that
	\[
	[z,xy] = [z,x][z,y][[y,z],x].
	\]
	The last equation follows by reordering the terms, which produces some higher order commutators into $ h $.
\end{proof}
\begin{corollary}
	\label{splitlemma}
	If  $ [y,z]\in Z(G) $, then
	\[
	[xy,z] = [x,z][y,z] \ja [z,xy] = [z,x][z,y].
	\]
\end{corollary}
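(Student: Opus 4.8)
The plan is to obtain both identities as immediate specializations of Lemma~\ref{l:split2}, whose formulas already isolate the desired products $[x,z][y,z]$ and $[z,x][z,y]$ up to correction terms of weight $\geq 3$. The only work is to verify that the hypothesis $[y,z]\in Z(G)$ forces those correction terms to vanish.

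First I would record the elementary fact that a commutator having a central entry is trivial: if $w\in Z(G)$, then for every $a\in G$ we have $[a,w]=awa^{-1}w^{-1}=e_G$ and $[w,a]=e_G$, since $w$ commutes with $a$. Applying this with $w=[y,z]$, which is central by assumption, yields $[x,[y,z]]=e_G$ and $[[y,z],x]=e_G$.

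Then I would substitute these vanishings into the two identities of Lemma~\ref{l:split2}. The first becomes
\[
[xy,z]=[x,[y,z]]\,[y,z]\,[x,z]=[y,z][x,z]=[x,z][y,z],
\]
where the final reordering is legitimate because $[y,z]$ is central and hence commutes with $[x,z]$. The second becomes
\[
[z,xy]=[z,x][z,y][[y,z],x]=[z,x][z,y],
\]
which is exactly the claim.

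There is no genuine obstacle here: all the substantive content lives in Lemma~\ref{l:split2}, and centrality of $[y,z]$ simply annihilates each weight-$\geq 3$ correction term, every one of which carries $[y,z]$ as one of its entries. The only point worth stating carefully is the commutation $[y,z][x,z]=[x,z][y,z]$ used to put the first identity in the asserted order, which again follows from $[y,z]\in Z(G)$.
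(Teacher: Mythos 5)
Your proposal is correct and is exactly the intended argument: the paper states the corollary without proof precisely because it follows from Lemma~\ref{l:split2} by the substitution you describe, with centrality of $[y,z]$ killing the correction terms $[x,[y,z]]$ and $[[y,z],x]$ and justifying the reordering $[y,z][x,z]=[x,z][y,z]$. Your explicit note on that final reordering is a worthwhile touch of care, but the route is the same.
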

\begin{corollary}
	\label{cor:split}
	Let $ n,m\in \N $. We have that
	\[
	[x^n,y^m] = h[x,y]^{nm},
	\]
	where $ h $ is a product of commutators of $ x $ and $ y $ of weight $ \geq 3 $.
\end{corollary}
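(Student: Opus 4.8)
The plan is to route the identity through the free step-$2$ nilpotent quotient on the two generators $x,y$, where the relation holds exactly, and then to recognize the leftover factor $h$ as an element of the third term of the lower central series. Write $K := \langle x, y\rangle$ for the subgroup generated by $x$ and $y$, and let $K^{(3)}$ be the third element of its lower central series, which is a normal subgroup of $K$. By Lemma~\ref{l:simple:commutators} applied to $M = \{x,y\}$, the subgroup $K^{(3)}$ is generated by simple commutators of weight $\geq 3$ in $x^{\pm 1}, y^{\pm 1}$; consequently \emph{every} element of $K^{(3)}$ is a product of commutators of $x$ and $y$ of weight $\geq 3$ (inverses of such commutators are again commutators of the same weight). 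This is precisely the class of elements that the sought factor $h$ is permitted to be, so it suffices to show $[x^n,y^m]([x,y]^{nm})^{-1} \in K^{(3)}$.

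First I would pass to the quotient $\bar K := K/K^{(3)}$. Since $[K, K^{(2)}] \subseteq K^{(3)}$ by the definition of the lower central series, the image $K^{(2)}/K^{(3)}$ is central in $\bar K$; equivalently, every commutator $[\bar a, \bar b]$ lies in the center of $\bar K$, so $\bar K$ is nilpotent of step at most $2$. In such a group the hypothesis of Corollary~\ref{splitlemma} is automatically met by all commutators, which lets me compute $[\bar x^n, \bar y^m]$ directly. Iterating the first identity of Corollary~\ref{splitlemma}, splitting $\bar x^n = \bar x\cdot \bar x^{n-1}$ and using that $[\bar x^{n-1},\bar y]$ is central, gives $[\bar x^n, \bar y] = [\bar x, \bar y]^n$ by induction on $n$; iterating the second identity, splitting $\bar y^m = \bar y\cdot \bar y^{m-1}$, then yields $[\bar x^n, \bar y^m] = [\bar x^n,\bar y]^m = [\bar x, \bar y]^{nm}$ by induction on $m$ (the powers of the central element $[\bar x,\bar y]$ commute, so collecting them is harmless).

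Reading this equality back in $K$ says exactly that $h := [x^n, y^m]\,([x,y]^{nm})^{-1}$ lies in $K^{(3)}$. Hence $[x^n, y^m] = h\,[x,y]^{nm}$, and by the first paragraph $h$ is a product of commutators of $x$ and $y$ of weight $\geq 3$, which is the claim.

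The step I expect to need the most care is the bookkeeping of weights. In the alternative, purely inductive route that splits $[x^n, y^m]$ via Lemma~\ref{l:split2} without passing to a quotient, the correction terms produced are nested commutators whose entries are the \emph{powers} $x^n, y^{m-1}$ rather than the letters $x,y$; one then has to check separately that each such term still lands in $K^{(3)}$ (it does, since a weight-$k$ bracket of elements of $K^{(i)}$ and $K^{(j)}$ lies in $K^{(i+j)}$) and to collect these terms to the front past the powers of $[x,y]$ using normality of $K^{(3)}$. Working in the step-$2$ quotient avoids this collection step altogether, which is why I would prefer it; there the only point requiring verification is the centrality of $K^{(2)}/K^{(3)}$, and that is immediate from the definition of the lower central series.
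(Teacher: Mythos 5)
Your proof is correct, but it takes a genuinely different route from the paper's. The paper's own proof is a one-line computation: iterate Lemma~\ref{l:split2} a total of $nm$ times to split $[x^n,y^m]$ into $nm$ copies of $[x,y]$ plus correction terms, then reorder, absorbing the corrections (and the further commutators produced by the reordering) into $h$; the weight bookkeeping is left implicit. You instead work structurally: you pass to the class-two quotient $\bar K = \langle x,y\rangle/\langle x,y\rangle^{(3)}$, where the centrality hypothesis of Corollary~\ref{splitlemma} holds automatically for all commutators, obtain $[\bar x^n,\bar y^m]=[\bar x,\bar y]^{nm}$ by two clean inductions, and read off $h = [x^n,y^m][x,y]^{-nm}\in \langle x,y\rangle^{(3)}$, identified via Lemma~\ref{l:simple:commutators} as a product of commutators of weight $\geq 3$. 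Your version is the more rigorous one --- all of the paper's suppressed ``reordering'' is replaced by the single observation that $K^{(2)}/K^{(3)}$ is central --- while the paper's version is more explicit about which commutators actually appear in $h$, which is in the spirit of its hands-on computations elsewhere. One small caveat: Lemma~\ref{l:simple:commutators} gives generators that are simple commutators in the letters $x^{\pm 1}, y^{\pm 1}$, so your $h$ is a priori a product of commutators in $x^{\pm1},y^{\pm1}$ rather than literally in $x$ and $y$ as the statement reads; this discrepancy is harmless here, since every use of the corollary in the paper (in Lemma~\ref{l:dilation} and Lemma~\ref{findim}) only needs membership of $h$ in the appropriate term of the lower central series, which your argument delivers (a bracket with $a$ occurrences of $x^{\pm1}$ and $b$ of $w^{\pm1}$, $w\in G^{(k-1)}$, lands in $G^{(a+b(k-1))}$, exactly as required there), but it is worth flagging if one wanted the statement verbatim.
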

\begin{proof}
	The proof is by iterating Lemma~\ref{l:split2} for $ nm $ times and reordering the terms, which produces some additional higher order commutators into $ h $.
\end{proof}
\begin{lemma}
	\label{inverse}
	Let $ G $ be a group, $ x,y\in G $. Then
	\[
	[\inv{x},y] = [\inv{x},[y,x]]\inv{[x,y]}.
	\]
\end{lemma}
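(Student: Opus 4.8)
The plan is to prove the identity by a direct computation organized around a single elementary conjugation identity for commutators. The key observation is that in any group one has $[a,b]\,b = ab\inv{a}$, which is immediate from the definition $[a,b] = ab\inv{a}\inv{b}$, since
\[
[a,b]\,b = (ab\inv{a}\inv{b})\,b = ab\inv{a}.
\]
In words, multiplying a commutator $[a,b]$ on the right by its second entry $b$ produces the conjugate of $b$ by $a$. This is the only structural fact needed.

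First I would record the trivial but crucial bookkeeping fact that $\inv{[x,y]} = [y,x]$, obtained by inverting the word $xy\inv{x}\inv{y}$. With this, the right-hand side of the claimed identity rewrites as $[\inv{x},[y,x]]\,[y,x]$, i.e. a commutator multiplied on the right by its own second entry. Then I would apply the conjugation identity above with $a = \inv{x}$ and $b = [y,x]$, which gives
\[
[\inv{x},[y,x]]\,[y,x] = \inv{x}\,[y,x]\,x.
\]
Finally, substituting $[y,x] = yx\inv{y}\inv{x}$ and cancelling yields $\inv{x}\,[y,x]\,x = \inv{x}yx\inv{y}$, which is exactly $[\inv{x},y]$ by definition. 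Chaining these equalities proves the lemma.

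The computation is entirely routine and there is no genuine obstacle; the only points requiring care are keeping track of inverses, in particular using $\inv{[x,y]} = [y,x]$ rather than $[x,y]$, and noting that the conjugation identity is stated for right multiplication (one applies it to $[\inv{x},[y,x]]\,[y,x]$, not to $[y,x]\,[\inv{x},[y,x]]$). Alternatively, one may bypass the conceptual identity altogether and simply expand both sides as reduced words in $x,y,\inv{x},\inv{y}$ and cancel, arriving at the common value $\inv{x}yx\inv{y}$ with no cleverness at all.
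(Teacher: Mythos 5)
Your proof is correct and takes essentially the same route as the paper: both are direct word-level computations with the convention $[g,h]=gh\inv{g}\inv{h}$, the paper expanding $[\inv{x},[y,x]]$ to recognize it as $[\inv{x},y][x,y]$, while you package the identical cancellation through $\inv{[x,y]}=[y,x]$ and the conjugation identity $[a,b]\,b=ab\inv{a}$. No gaps; the reorganization is cosmetic.
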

\begin{proof}
	The statement follows from
	\[
	[\inv{x},[y,x]] = \inv{x}yx\inv{y}\inv{x}x\inv{[y,x]} = [\inv{x},y][x,y].
	\]
\end{proof}

\newpage
\bibliography{general_bibliography} 
 
\bibliographystyle{amsalpha}

\end{document}